\newcommand{\Bis}{\mathop{\mathrm{Bis}}\nolimits}
\newcommand{\skel}[1]{^{(#1)}}
\newcommand{\Is}{\mathop{\mathrm{Is}}}
\newcommand{\dom}{\mathop{\boldsymbol d}}
\newcommand{\ran}{\mathop{\boldsymbol r}}
\renewcommand{\to}{\longrightarrow}
\newcommand{\Ind}{\mathop{\mathrm{Ind}}\nolimits}
\newcommand{\inv}{^{-1}}
\newcommand{\p}{\varphi}
\newcommand{\ov}[1]{\ensuremath{\overline {#1}}}
\newcommand{\wh}{\widehat}
\newtheorem{Thm}{Theorem}[section]
\newtheorem{Prop}[Thm]{Proposition}
\newtheorem{Lemma}[Thm]{Lemma}
{\theoremstyle{definition}
}
{\theoremstyle{remark}
}
\newtheorem{Cor}[Thm]{Corollary}
{\theoremstyle{remark}
}
{\theoremstyle{remark}
\newtheorem{Example}[Thm]{Example}}
\theoremstyle{remark}
\theoremstyle{remark}
\theoremstyle{remark}
\numberwithin{equation}{section}
\title[\'Etale groupoid algebras]{Simplicity, primitivity and semiprimitivity of \'etale groupoid algebras with applications to inverse semigroup algebras}
\author{Benjamin Steinberg}
\address[B.~Steinberg]{%
    Department of Mathematics\\
    City College of New York\\
    Convent Avenue at 138th Street\\
    New York, New York 10031\\
    USA}
\email{bsteinberg@ccny.cuny.edu}
\thanks{This work was partially supported by a grant from the Simons Foundation(\#245268
to Benjamin Steinberg) and the Binational Science Foundation of Israel and the US (\#2012080 to Benjamin Steinberg)}
\date{\today}
\keywords{\'etale groupoids, inverse semigroups, groupoid algebras}
\subjclass[2010]{20M18,20M25, 16S99,16S36, 22A22, 18F20i}
\begin{document}

\begin{abstract}
This paper studies simplicity, primitivity and semiprimitivity of algebras associated to \'etale groupoids. Applications to inverse semigroup algebras are presented.  The results also recover the semiprimitivity of Leavitt path algebras and can be used to recover the known primitivity criterion for Leavitt path algebras.
\end{abstract}

\maketitle

\section{Introduction}
The author~\cite{mygroupoidalgebra,mygroupoidarxiv} associated to a commutative ring with unit $\Bbbk$ and an \'etale groupoid $\mathscr G$ with locally compact  Hausdorff and totally disconnected unit space a $\Bbbk$-algebra $\Bbbk\mathscr G$, which is a discrete analogue of the groupoid $C^*$-algebra of $\mathscr G$~\cite{Renault,Paterson,Exel}.  This class of algebras includes group algebras, inverse semigroup algebras and Leavitt path algebras.  Further study of these algebras has occurred in~\cite{operatorguys1,operatorsimple1,operatorsimple2,groupoidbundles,GroupoidMorita}. Recently, Nekrashevych~\cite{Nekgrpd} has used \'etale groupoid algebras to construct finitely generated simple algebras of quadratic growth over any base field.

In this paper we study simplicity, primitivity and semiprimitivity of group\-oid algebras.  In particular, an extension of the famous result of Amitsur on semiprimitivity of group algebras in characteristic $0$ over fields which are not algebraic over $\mathbb Q$ is obtained for groupoid algebras. Applications are then presented to inverse semigroup algebras.  In particular, we recover results of Munn~\cite{MunnSemiprim,MunnSemiprim2,MunnAlgebraSurvey} and Domanov~\cite{Domanov} on semiprimitivity of inverse semigroup algebras with simpler, and more conceptual proofs.  A primitivity result of Munn~\cite{Munnprimitive} is also recovered.  We give a partial answer to a question of Munn as to which contracted inverse semigroup algebras are simple~\cite{MunnAlgebraSurvey}. Namely, we characterize all inverse semigroups with Hausdorff universal groupoid whose contracted semigroup algebra is simple.  This includes all $0$-$E$-unitary inverse semigroups.

We also note that the semiprimitivity of Leavitt path algebras over any base field~\cite{Leavittsemiprimitivity} is an immediate consequence of our results.  The primitivity results for Leavitt path algebras of~\cite{Leavittprimitivity} can also be rederived from our results, although we do not do so here.  The main observation is that condition (L) corresponds to effectiveness of the corresponding \'etale groupoid and that the further condition needed for primitivity amounts to the groupoid having a dense orbit.

The paper is organized as follows.   First we recall basics on \'etale groupoids, inverse semigroups and their associated algebras.  Then we discuss the extension of the simplicity/uniquness results of~\cite{operatorsimple1} to arbitrary rings (see the historical discussion below for connections  with~\cite{operatorsimple2}).  This is followed by our main results on primitivity and semiprimitivity.  We then discuss inverse semigroup algebras and how topological properties of groupoids of germs relate to dynamical properties of inverse semigroup actions (see the historical discussion below for connections with~\cite{ExelPardo}).  Then our \'etale groupoid results are applied to inverse semigroup algebras.

\subsubsection*{Historical note}
This paper began when the author read~\cite{operatorsimple1} and realized it could be use to make progress on an old question of Munn~\cite{MunnAlgebraSurvey}.  The author was able to remove the assumption that the base field was $\mathbb C$ and obtained results on minimality and effectiveness of tight groupoids of inverse semigroups.  These results were announced at the workshop ``Semigroups and Applications" (Uppsala, August 2012). Since then the simplicity results were obtained independently by Clark and Edie-Michell~\cite{operatorsimple2} (and submitted before we wrote up our results). The results on primitivity and semiprimitivity were obtained in 2013/2014 and presented at ``Partial Actions and Representations Symposium" (Gramado, May 2014) and the ``Fields Institute Workshop on Groups, Rings and Group Rings" (July 2014).   As we were finalizing this paper for submission, Exel and Pardo placed on ArXiv the paper~\cite{ExelPardo}, which contains quite a bit of overlap with our results on tight groupoids that were announced in Uppsala and Gramado, and which appear in the final section of this paper. The work in~\cite{ExelPardo} was obtained independently of our work and some of it was mentioned in the meeting at Gramado in connection with self-similar group actions.

\section{Groupoids, inverse semigroups and their algebras}
This section contains preliminaries about groupoids, inverse semigroups and their algebras.  Lawson~\cite{Lawson} is the definitive reference for inverse semigroups theory. For \'etale groupoids, we recommend~\cite{Renault,Exel,Paterson}.  Algebras of ample groupoids were introduced in~\cite{mygroupoidalgebra}; see also~\cite{mygroupoidarxiv} for some additional results not included in~\cite{mygroupoidalgebra} as well as~\cite{operatorguys1,operatorsimple1,operatorsimple2}.

\subsection{Inverse semigroups}
An \emph{inverse semigroup} is a semigroup $S$ such that, for all $s\in S$, there exists unique $s^*\in S$ with $ss^*s=s$ and $s^*ss^*=s^*$.  Notice that $s^*s,ss^*$ are idempotents. Also, note that $(st)^*=t^*s^*$.  Idempotents of $S$ commute and so $E(S)$ is a subsemigroup.  Moreover, it is a meet semilattice with respect to the ordering $e\leq f$ if $ef=e$.  In fact, $S$ itself is ordered by $s\leq t$ if $s=te$ for some idempotent $e\in E(S)$, or equivalently $s=ft$ for some $f\in E(S)$.  This partial order is compatible with multiplication and stable under the involution. We put $s^{\downarrow}=\{t\in S\mid t\leq s\}$ and $s^{\uparrow}=\{t\in S\mid t\geq s\}$. If $e\in E(S)$, then $G_e=\{s\in S\mid s^*s=e=ss^*\}$ is a group called the \emph{maximal subgroup} of $S$ at $e$.  It is the group of units of the monoid $eSe$.

All groups are inverse semigroups, as are all (meet) semilattices. A semidirect product $E\rtimes G$ of a group $G$ and a semilattice $E$ is also an inverse semigroup. If $X$ is a topological space, then the set of all homeomorphisms between open subsets of $X$ is an inverse semigroup $I_X$ under the usual composition of partial functions.  An inverse semigroup $S$ has a zero element $z$, if $zs=z=sz$ for all $s\in S$.  Zero elements are unique when they exist and will often be denoted $0$.  The zero element of $I_X$ is the empty partial bijection.

By an action of an inverse semigroup $S$ on a space $X$, we mean a homomorphism $\theta\colon S\to I_X$ such that if we put $X_e=\mathrm{dom}(\theta(e))$, then  \[\bigcup_{e\in E(S)}X_e= X.\]  This last condition
is a non-degeneracy condition and implies, for instance, that a group must act by homeomorphisms.  We write $\mathrm{Fix}(s)$ for the fixed-point set of $s$ and we put
\begin{equation}\label{defineXs}
X_s=\bigcup_{\{e\in E(S)\mid e\leq s\}}X_e.
\end{equation}
Note that if $s$ is idempotent, then both definitions of $X_e$ agree and so there is no ambiguity.  Trivially, $X_s\subseteq \mathrm{Int}(\mathrm{Fix}(s))$ because $s$ fixes $X_e$ pointwise if $e\leq s$ is an idempotent.

\begin{Prop}\label{p:faithful}
If $S$ is an inverse semigroup acting faithfully on a space $X$ and if the $X_e$ with $e\in E(S)$ form a basis for the topology on $X$, then $X_s=\mathrm{Int}(\mathrm{Fix}(s))$.
\end{Prop}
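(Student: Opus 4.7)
The inclusion $X_s\subseteq\mathrm{Int}(\mathrm{Fix}(s))$ has already been noted, so my task is to establish the reverse inclusion $\mathrm{Int}(\mathrm{Fix}(s))\subseteq X_s$. The plan is to pick any $x\in\mathrm{Int}(\mathrm{Fix}(s))$, invoke the basis hypothesis to find an idempotent $e$ with $x\in X_e\subseteq\mathrm{Int}(\mathrm{Fix}(s))$, and then upgrade the geometric condition that $s$ acts as the identity on $X_e$ to the algebraic condition $e\leq s$ in $S$. Once $e\leq s$ is in hand, $x\in X_e\subseteq X_s$ follows immediately from \eqref{defineXs}.

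For the upgrade step I would use the standard characterization that, for an idempotent $e$, the inequality $e\leq s$ is equivalent to $se=e$ (since $e\leq s$ means $e=se^*e$ and $e^*e=e$). So it suffices to check $se=e$, and this I plan to verify by comparing partial homeomorphisms and invoking faithfulness. Writing $\theta(se)=\theta(s)\circ\theta(e)$, its domain is $X_e\cap X_{s^*s}$; but each point of $X_e$ is fixed by $s$ and therefore lies in $\mathrm{dom}(\theta(s))=X_{s^*s}$, so $X_e\subseteq X_{s^*s}$ and the domain of $\theta(se)$ is all of $X_e$. On this domain the map acts as $y\mapsto\theta(s)(y)=y$, so $\theta(se)$ is the partial identity on $X_e$, which is exactly $\theta(e)$. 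Faithfulness of $\theta$ then forces $se=e$, as required.

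The one point that requires attention is the domain calculation: the inclusion $X_e\subseteq X_{s^*s}$ comes not from an a priori algebraic relation between $e$ and $s^*s$ but from the geometric observation that a point fixed by $s$ must lie in the domain of $\theta(s)$. With that in place, the argument reduces to a single appeal to injectivity of the action homomorphism, and neither the minimality nor the Hausdorffness of $X$ plays any role.
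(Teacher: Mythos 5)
Your proposal is correct and follows essentially the same route as the paper: take a basic neighborhood $X_e\subseteq\mathrm{Fix}(s)\subseteq X_{s^*s}$ of an interior point, observe that $\theta(se)$ and $\theta(e)$ are both the partial identity on $X_e$, and conclude $se=e$ (i.e.\ $e\leq s$) by faithfulness, whence $x\in X_e\subseteq X_s$. The only cosmetic difference is that the paper first records $e\leq s^*s$ as an intermediate step, whereas you fold that into the domain computation for $\theta(s)\circ\theta(e)$.
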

\begin{proof}
Clearly, $X_s\subseteq\mathrm{Fix}(s)$ and since $X_s$ is open, it consists of interior points. Conversely, let $x$ be an interior point $\mathrm{Fix}(s)$ and suppose that $X_e$ is a basic neighborhood of $x$ with $X_e\subseteq \mathrm{Fix}(s)\subseteq X_{s^*s}$.  Then we deduce that $e\leq s^*s$ by faithfulness of the action and that $sex=x=ex$ for all $x\in X_e=X_{(se)^*(se)}$.  Thus $se=e$, that is, $e\leq s$, by faithfulness. Therefore, $x\in X_s$.
\end{proof}

A \emph{congruence} on an inverse semigroup $S$ is an equivalence relation $\equiv$ such that $s\equiv s'$ implies $us\equiv us'$ and $sv\equiv s'v$ for all $u,v\in S$.   An inverse semigroup $S$ is called \emph{congruence-free} if the only congruences on $S$ are the equality relation and the universal relation.  For example, a group is congruence-free if and only if it is simple.  We consider neither the trivial inverse semigroup, nor the empty inverse semigroup to be congruence-free.

An inverse semigroup $S$ is \emph{$E$-unitary} if $s\geq e$ with $e\in E(S)$ implies that $s\in E(S)$.  An inverse semigroup $S$ with zero is called \emph{$0$-$E$-unitary} (or \emph{$E^*$-unitary}) if $s\geq e$ with $e\in E(S)\setminus \{0\}$ implies $s\in E(S)$.  We shall say that an inverse semigroup $S$ is \emph{Hausdorff} if, for all $s,t\in S$, the set $s^\downarrow\cap t^\downarrow$ is finitely generated as a lower set, that is, there is a finite set $F$ such that $x\leq s,t$ if and only if $x\leq u$ for some $u\in F$. (The term weak semilattice is used for this in~\cite{mygroupoidalgebra}.) It is known that $E$-unitary and $0$-$E$-unitary inverse semigroups are Hausdorff~\cite{mygroupoidalgebra} (in fact, this follows directly from Proposition~\ref{p:Hausdorffsemichar} below). The reason for the terminology Hausdorff will become apparent later.

\begin{Prop}\label{p:Hausdorffsemichar}
An inverse semigroup $S$ is Hausdorff if and only if the lower set $(s^*s)^\downarrow\cap s^\downarrow$ of $E(S)$ is  finitely generated for all $s\in S$.
\end{Prop}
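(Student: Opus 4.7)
The plan is to prove the two directions separately. The forward implication will be essentially immediate from the Hausdorff condition by specialization; the real content is the reverse direction, where I want to show that finite generation of the ``diagonal'' intersections $(s^*s)^\downarrow\cap s^\downarrow$ forces finite generation of all intersections $s^\downarrow\cap t^\downarrow$.

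For $(\Rightarrow)$, I would simply apply the Hausdorff hypothesis with the pair $(s, s^*s)$. The resulting lower set $s^\downarrow\cap (s^*s)^\downarrow$ lies entirely in $E(S)$, since any element below an idempotent in an inverse semigroup is itself idempotent. Hence its finite generation as a lower set in $S$ is equivalent to finite generation as a lower set in $E(S)$.

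For $(\Leftarrow)$, given $s,t\in S$, I would apply the hypothesis to the element $u=s^*t$. This yields that the lower set $F := \{f\in E(S):f\leq s^*t\}$ is finitely generated in $E(S)$, say by $f_1,\ldots,f_n$. The key claim is then that $\{sf_1,\ldots,sf_n\}$ generates $s^\downarrow\cap t^\downarrow$ as a lower set in $S$. Verifying that each $sf_i$ belongs to $s^\downarrow\cap t^\downarrow$ uses two standard inverse-semigroup facts: first, $sf_i\leq s$ because $f_i$ is idempotent; second, $f_i\leq s^*t$ gives $f_i=s^*tf_i$, so $sf_i = ss^*tf_i$, and since left-multiplying by the idempotent $ss^*$ produces an element below the original, $sf_i\leq tf_i\leq t$.

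The converse inclusion is the step I expect to be the heart of the argument. Given $x\leq s,t$, I would set $f:=x^*x\in E(S)$ and exploit the identities $x=sx^*x=tx^*x$ (valid because $x\leq s$ and $x\leq t$) to get $x=sf=tf$. Then a short computation shows $f\leq s^*t$: namely $s^*tf = s^*(tf)=s^*x = s^*sf = f$, where the last step uses $f\leq s^*s$, itself a consequence of $x\leq s$. Thus $f\in F$, so $f\leq f_i$ for some $i$, and from $f=f_if$ I conclude $x=sf = (sf_i)f\leq sf_i$. The main obstacle is really this back-and-forth manipulation with the natural partial order; everything reduces to remembering the characterizations $f\leq v\iff f=vf\iff f=fv$ for idempotent $f$, and the fact that multiplying by an idempotent lowers in the order. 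Once the key identity $\{f\in E(S):sf=tf,\,f\leq s^*s\} = F$ is established, the finite-generation conclusion is immediate.
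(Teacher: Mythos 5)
Your proof is correct and follows essentially the same route as the paper: the paper transports finite generation via the order isomorphism $e\mapsto es$ from $(u^*u)^\downarrow\cap u^\downarrow$ with $u=ts^*$ onto $s^\downarrow\cap t^\downarrow$, while you use the mirror-image map $f\mapsto sf$ on the idempotents below $u=s^*t$ and verify generation directly instead of packaging it as an order isomorphism. All the order manipulations you flag as the ``heart'' of the argument (e.g.\ $x\leq s,t$ gives $x=sx^*x$ and $x^*x\leq s^*t$) check out, so there is no gap.
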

\begin{proof}
Necessity is clear.  For sufficiency, suppose that $s,t\in S$ and put $u=ts^*$.  Then we claim that the mapping $e\mapsto es$ provides an order isomorphism $(u^*u)^{\downarrow}\cap u^\downarrow\to s^\downarrow\cap t^\downarrow$ with inverse $x\mapsto xs^*$. The proposition will then follow.  From $u^*u= st^*ts^*$ we have that $e\leq u^*u,u$ implies that $es\leq us=ts^*s\leq t$ and $es\leq s$ (because $e$ is idempotent).  Conversely, if $x\leq s,t$, then $xs^*\leq ss^*,ts^*$ and hence $xs^*\leq u$ and is an idempotent.  Thus $xs^*\leq u^*u$.  It remains to show these are inverse mappings.  Note that $e\leq u$ implies that $e=ts^*f$ for some idempotent $f$. Then $ess^*=ts^*fss^*=ts^*ss^*f=ts^*f=e$.  Conversely, if $x\leq s,t$, then $x=sf$ with $f\in E(S)$ and so $xs^*s=sfs^*s=sf=x$.  This completes the proof.
\end{proof}

If $\Bbbk$ is a commutative ring with unit, then the semigroup algebra $\Bbbk S$ of an inverse semigroup $S$ is defined in the usual way as a $\Bbbk$-algebra with basis $S$ and multiplication extending that of $S$ via the distributive law. If $S$ is an inverse semigroup with zero element $z$, then the contracted semigroup algebra is $\Bbbk_0S=\Bbbk S/\Bbbk z$.  It amounts to identifying the zero of $S$ with the zero of $\Bbbk$ and it is universal for representations of $S$ into $\Bbbk$-algebras that preserve the zero element.

Occasionally, we shall require the notion of a \emph{generalized boolean algebra}, that is a relatively complemented, distributive lattice with bottom.  Generalized boolean algebras are, up to isomorphism, ideals in boolean algebras.

\subsection{\'Etale groupoids}
In this paper, compactness will include the Hausdorff axiom.  However, we do not require locally compact spaces to be Hausdorff. A topological groupoid $\mathscr G=(\mathscr G\skel 0,\mathscr G\skel 1)$ is \emph{\'etale} if its domain map $\dom$ (or, equivalently, its range map $\ran$) is a local homeomorphism.  In this case, identifying objects with identity arrows, we have that $\mathscr G\skel 0$ is an open subspace of $\mathscr G\skel 1$ and the multiplication map is a local homeomorphism.  Details can be found in~\cite{Paterson,resendeetale,Exel}.

Following~\cite{Paterson}, an \'etale groupoid is called \emph{ample} if its unit space $\mathscr G\skel 0$ is locally compact Hausdorff with a basis of compact open subsets. We shall say that an ample groupoid $\mathscr G$ is Hausdorff if $\mathscr G\skel 1$ is Hausdorff.

A \emph{local bisection} of an \'etale groupoid $\mathscr G$ is an open subset $U\subseteq \mathscr G\skel 1$ such that $\dom|_U$ and $\ran|_U$ are homeomorphisms.  The local bisections form a basis for the topology on $\mathscr G\skel 1$~\cite{Exel}. The set $\Bis(\mathscr G)$ of local bisections is an inverse monoid under the binary operation \[UV = \{uv\mid u\in U,\ v\in V,\ \dom (u)=\ran (v)\}.\] The semigroup inverse is given by $U^* = \{u\inv\mid u\in U\}$ and $E(\Bis(\mathscr G))=\Bis(\mathscr G\skel 0)$. The inverse monoid $\Bis(\mathscr G)$ acts on $\mathscr G\skel 0$ by partial homeomorphisms by putting \[U\cdot x=\begin{cases}y, & \text{if there is $g\in U$ with $\dom(g)=x,\ran(g)=y$}\\ \text{undefined},  & \text{else.}\end{cases}\] The set $\Bis_c(\mathscr G)$ of compact open local bisections (which should be thought of as the set of local bisections with compact support) is an inverse subsemigroup of $\Bis(\mathscr G)$ (it is a submonoid if and only if $\mathscr G\skel 0$ is compact)~\cite{Paterson}. Note that $\mathscr G$ is ample if and only if $\Bis_c(\mathscr G)$ is a basis for the topology on $\mathscr G\skel 1$~\cite{Exel,Paterson}.

The \emph{isotropy subgroupoid} of a groupoid $\mathscr G=(\mathscr G\skel 0,\mathscr G\skel 1)$ is the subgroupoid $\Is(\mathscr G)$ with $\Is(\mathscr G)\skel 0=\mathscr G\skel 0$ and \[\Is(\mathscr G)\skel 1=\{g\in \mathscr G\skel 1\mid \dom(g)=\ran(g)\}.\]  The \emph{isotropy group} of $x\in \mathscr G\skel 0$ is the group \[G_x=\{g\in \mathscr G\skel 1\mid \dom(g)=x=\ran(g)\}.\] An \'etale groupoid is said to be \emph{effective} if $\mathscr G\skel 0=\mathrm{Int}(\Is(\mathscr G)\skel 1)$.  It is well known, and easy to prove, that an ample groupoid $\mathscr G$ is effective if and only if the natural action of $\Bis_c(\mathscr G)$ on $\mathscr G\skel 0$ is faithful.

If $x\in \mathscr G\skel 0$, then the \emph{orbit} $\mathcal O_x$ of $x$ consists of all $y\in \mathscr G\skel 0$ such that there is an arrow $g$ with $\dom(g)=x$ and $\ran(g)=y$.  The orbits form a partition of $\mathscr G\skel 0$. If $\mathscr G$ is ample, then the orbits of $\mathscr G$ are precisely the orbits for the natural action of $\Bis_c(\mathscr G)$ on $\mathscr G\skel 0$.

A subset $X\subseteq \mathscr G\skel 0$ is \emph{invariant} if it is a union of orbits.
Equivalently, $X$ is invariant if and only if it is invariant under the natural action of $\Bis_c(\mathscr G)$ on $\mathscr G\skel 0$.  An \'etale groupoid is said to be \emph{minimal} if $\mathscr G\skel 0$ has no proper, non-empty closed invariant subsets, or equivalently, if each orbit is dense.

A key example of an \'etale groupoid is that of a groupoid of germs.  Let $S$ be an inverse semigroup acting on a locally compact Hausdorff space $X$.  The groupoid of germs $\mathscr G=S\ltimes X$ is defined as follows. One puts $\mathscr G\skel 0=X$ and $\mathscr G\skel 1=\{(s,x)\in S\times X\mid x\in X_{s^*s}\}/{\sim}$ where $(s,x)\sim (t,y)$ if and only if $x=y$ and there exists $u\leq s,t$ with $x\in X_{u^*u}$. Note that if $S$ is a group, then there are no identifications. The $\sim$-class of an element $(s,x)$ is denoted $[s,x]$.  The topology on $\mathscr G\skel 1$ has basis all sets of the form $(s,U)$ where $U\subseteq X_{s^*s}$ is open and $(s,U) = \{[s,x]\mid x\in U\}$.  Note that if $[t,x]\in (s,U)$, then $(t,V)\subseteq (s,U)$ for some open neighborhood $V$ with $x\in V\subseteq U$. Indeed, since $[t,x]=[s,x]$, there exists $u\leq s,t$ with $x\in X_{u^*u}$.  It follows that if $V=U\cap X_{u^*u}\cap X_{t^*t}$, then $x\in V$ and $[t,y]=[s,y]$ for all $y\in V$.  Thus each arrow $[t,x]$ has a basis of neighborhoods of the form $(t,U)$ with $U\subseteq X_{t^*t}$.   One puts $\dom([s,x])=x$, $\ran([s,x])=sx$ and defines $[s,ty][t,y]=[st,y]$.  Inversion is given by $[s,x]\inv = [s^*,sx]$.  Note that $(s,X_{s^*s})\in \Bis(S\ltimes X)$ and if $X_{s^*s}$ is compact, then $(s,X_{s^*s})\in \Bis_c(S\ltimes X)$.  See~\cite{Exel,Paterson} for details.

The following criterion generalizes a result from~\cite{mygroupoidalgebra}.  It was first observed by the author (unpublished) under the assumption that the domains were compact open and then it was observed by R.~Exel and E.~Pardo, that clopen suffices (private communication).

\begin{Prop}\label{p:hausdorffcondition}
Let $S$ be an inverse semigroup acting on a locally compact Hausdorff space $X$ and suppose that $X_e$ is clopen for all $e\in E(S)$.  Then $S\ltimes X$ is Hausdorff if and only if, for each $s\in S$, the set $X_s$ is closed.  In particular, if $S$ is Hausdorff, then so is $S\ltimes X$.
\end{Prop}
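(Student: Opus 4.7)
My plan is to reduce both directions of the biconditional to a single key fact and then apply Proposition~\ref{p:Hausdorffsemichar} for the ``in particular'' clause. The key fact, which I verify first, is that for $x\in X_{s^*s}$ the arrow $[s,x]$ equals the unit $x$ in $\mathscr G\skel 1$ if and only if $x\in X_s$. Unpacking the defining equivalence $\sim$: $[s,x]=[e,x]$ for some idempotent $e$ with $x\in X_e$ amounts to finding $u\le s,e$ with $x\in X_{u^*u}$. Any such $u$ is itself idempotent (being below the idempotent $e$), and an idempotent below $s$ is automatically below $s^*s$, so the condition collapses to the existence of an idempotent $f\le s$ with $x\in X_f$, i.e.\ to $x\in X_s$ by the definition of $X_s$.

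For the forward direction, assume $\mathscr G=S\ltimes X$ is Hausdorff, fix $s\in S$, and let $x\in\overline{X_s}$. Since each $X_e$ is clopen, the set $X_{s^*s}\cap X_{ss^*}$ is clopen, and $\mathrm{Fix}(s)$ is closed in $X$, being the equalizer in a Hausdorff space of the identity and the homeomorphism induced by $s$ restricted to that clopen set. As $X_s\subseteq\mathrm{Fix}(s)\subseteq X_{s^*s}$, we get $x\in X_{s^*s}$ and $sx=x$, so $[s,x]$ and the unit $x$ are parallel arrows. A basic neighborhood of $[s,x]$ has the form $(s,V)$ with $V$ open and $x\in V\subseteq X_{s^*s}$, while a basic neighborhood of the unit $x$ in $\mathscr G\skel 1$ is just an open $W\subseteq X$ with $x\in W$. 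Their intersection in $\mathscr G\skel 1$ equals $V\cap W\cap X_s$ (by the key fact), which is nonempty because $x\in\overline{X_s}$. Hence the two arrows cannot be separated, so Hausdorffness forces $[s,x]=x$, and the key fact gives $x\in X_s$.

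For the reverse direction, take distinct arrows $g_1,g_2\in\mathscr G\skel 1$. If their sources differ I pick disjoint open $U_1,U_2\subseteq X$ containing them and separate by $\dom\inv(U_i)$; differing ranges are handled symmetrically. Otherwise $g_i=[s_i,x]$ with a common range, and $g_2\inv g_1=[s_2^*s_1,x]$ lies in the isotropy at $x$. The key fact applied to $s_2^*s_1$ gives $g_2\inv g_1=x$ iff $x\in X_{s_2^*s_1}$, so $g_1\ne g_2$ is equivalent to $x\notin X_{s_2^*s_1}$. Closedness of $X_{s_2^*s_1}$ then makes $V:=(X_{s_1^*s_1}\cap X_{s_2^*s_2})\setminus X_{s_2^*s_1}$ an open neighborhood of $x$, and the bisections $(s_1,V)$ and $(s_2,V)$ separate $g_1$ and $g_2$: any coincidence $[s_1,z]=[s_2,z]$ with $z\in V$ would give $[s_2^*s_1,z]=z$ by the same computation, forcing $z\in X_{s_2^*s_1}\cap V=\emptyset$.

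For the ``in particular'' clause, assume $S$ is Hausdorff as an inverse semigroup. Proposition~\ref{p:Hausdorffsemichar} supplies a finite generating set $F$ for $s^\downarrow\cap(s^*s)^\downarrow$ as a lower set of $S$; since each $u\in F$ lies below the idempotent $s^*s$, each $u$ is itself idempotent. Every idempotent $e\le s$ satisfies $e\le s^*s$ as well, hence $e\le u$ for some $u\in F$, which gives $X_s=\bigcup_{u\in F}X_u$; this is a finite union of clopens, therefore closed, and the main statement applies. I expect the main delicacy to be in nailing down the key fact at the outset, particularly the observation that a common restriction $u\le s,e$ of $s$ and an idempotent $e$ is itself idempotent; once that is cleanly in hand, everything else reduces to routine manipulations of basic bisections and the closedness hypothesis.
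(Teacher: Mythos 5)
Your proof is correct and follows essentially the same route as the paper: both reduce Hausdorffness to separating parallel arrows $[s,x]$ and $[t,x]$ via the closedness of $X_{s^*t}$, and both obtain the ``in particular'' clause by writing $X_s$ as a finite union of clopen sets $X_u$ with $u$ ranging over idempotent generators of $(s^*s)^{\downarrow}\cap s^{\downarrow}$. Your ``key fact'' is just a packaged form of the computation the paper performs inline, and your forward direction is the contrapositive of the paper's separation argument.
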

\begin{proof}
Note that $X_s\subseteq X_{s^*s}$.
Suppose first that $S\ltimes X$ is Hausdorff and let $x\in X\setminus X_s$.  If $x\notin X_{s^*s}$, then $X\setminus X_{s^*s}$ is a neighborhood of $x$ disjoint from $X_s$.  So assume that $x\in X_{s^*s}$.  Then we claim that $[s,x]\neq [s^*s,x]$. Indeed, if $(s,x)\sim (s^*s,x)$, then we can find $u\leq s,s^*s$ with $x\in X_{u^*u}$.  But $u\leq s^*s$ implies that $u\in E(S)$ and $X_u=X_{u^*u}$.  Therefore, we have $x\in X_s$, a contradiction.  Thus we can find disjoint basic neighborhoods $(s,U)$ and $(s^*s,V)$ of $[s,x]$ and $[s^*s,x]$ respectively. Then $U\cap V$ is a neighborhood of $x$ and we claim it is disjoint from $X_s$. Indeed, if $y\in U\cap V\cap X_s$, then $y\in X_e$ for some idempotent $e\leq s$.  But then $e\leq s,s^*s$ and $y\in X_e$, whence $[s,y]=[s^*s,y]\in (s,U)\cap (s^*s,V)$, a  contradiction. We conclude $X_s$ is closed.

Conversely, suppose that $X_s$ is closed for all $x\in X$.  Let $[s,x]\neq [t,y]$ be arrows of $S\ltimes X$.  If $x\neq y$, then we can choose disjoint neighborhoods $U,V$ of $x,y$ in $X$, respectively. We may assume without loss of generality that $U\subseteq X_{s^*s}$ and $V\subseteq X_{t^*t}$.  Then $(s,U)$ and $(t,V)$ are disjoint neighborhoods of $[s,x]$ and $[t,y]$ respectively.

So assume next that $x=y$ and  put $u=s^*t$. We claim that $x\notin X_u$.  Indeed, if $x\in X_u$, then $x\in X_e$ for some idempotent $e\leq s^*t$. Put $z=se$. and write $e=s^*tf$ with $f\in E(S)$. Then $z^*z=s^*se=s^*ss^*tf=s^*tf=e$ and so $x\in X_{z^*z}$.  Clearly $z\leq s$.  But since $e\leq u$, we have $z=se\leq su=ss^*t\leq t$.  Thus $[s,x]=[t,x]$, a contradiction.  Since $X_u$ is closed, there is a neighborhood $U$ of $x$ with $U\subseteq X\setminus X_u$.  Without loss of generality, we may assume that $U\subseteq X_{s^*s}\cap X_{t^*t}$.

We claim that $(s,U)$ and $(t,U)$ are disjoint neighborhoods of $[s,x]$ and $[t,x]$, respectively. Indeed, suppose that $[s,x']=[t,x']$ belongs to $(s,U)\cap (t,U)$.  Then there exists $w\in S$ with $x'\in X_{w^*w}$ and $w\leq s,t$.  Then $w^*w\leq s^*t=u$ and so $x'\in X_u\cap U$, a contradiction.  This completes the proof of the first statement.

For the final statement, observe that the union in \eqref{defineXs} is finite if $S$ is Hausdorff (because $\{e\in E(S)\mid e\leq s\}=(s^*s)^{\downarrow} \cap s^{\downarrow}$) and so $X_s$ is closed.
\end{proof}

We remark that since $X_s\subseteq X_{s^*s}$, if $X_{s^*s}$ is compact, then $X_s$ is closed if and only if it is compact.

If $X$ is a Hausdorff space with a basis of compact open sets, then $S\ltimes X$ will be an ample groupoid~\cite{mygroupoidalgebra}.

\subsection{\'Etale groupoid algebras}

Fix now a commutative ring with unit $\Bbbk$.  The author~\cite{mygroupoidalgebra} associated a $\Bbbk$-algebra $\Bbbk \mathscr G$ to each ample groupoid $\mathscr G$ as follows.  We define $\Bbbk\mathscr G$ to be the $\Bbbk$-span in $\Bbbk^{\mathscr G\skel 1}$ of the characteristic functions $\chi_U$ of compact open subsets $U$ of $\mathscr G\skel 1$.  It is shown in~\cite[Proposition~4.3]{mygroupoidalgebra} that $\Bbbk \mathscr G$ is spanned by the elements $\chi_U$ with $U\in \Bis_c(\mathscr G)$.  If $\mathscr G\skel 1$ is Hausdorff, then $\Bbbk \mathscr G$ consists of the continuous $\Bbbk$-valued functions on $\mathscr G\skel 1$ with compact support where $\Bbbk$ is endowed with the discrete topology.  Convolution is defined on $\Bbbk \mathscr G$ by \[\p\ast \psi(g)=\sum_{\dom(h)=\dom(g)}\p(gh\inv)\psi(h).\]  The finiteness of this sum is proved in~\cite{mygroupoidalgebra}. The fact that the convolution belongs to $\Bbbk \mathscr G$ rests on the computation $\chi_U\ast \chi_V=\chi_{UV}$ for $U,V\in \Bis_c(\mathscr G)$~\cite{mygroupoidalgebra}.

Note that since $\mathscr G\skel 0$ is open in $\mathscr G\skel 1$, it follows that $\Bbbk \mathscr G\skel 0$ (where we view $\mathscr G\skel 0$ as a groupoid consisting of identities) is a subalgebra of $\Bbbk \mathscr G$.  Also observe that $\Bbbk \mathscr G\skel 0$ is just the ring of $\Bbbk$-valued continuous functions with compact support on $\mathscr G\skel 0$ with pointwise multiplication, and hence is commutative.

The algebra $\mathbb C\mathscr G$, in the case where $\mathscr G$ is Hausdorff, has been further studied in~\cite{operatorguys1,operatorsimple1}; see also~\cite{operatorsimple2}.  Morita equivalence  with applications to Leavitt path algebras~\cite{Leavittsemiprimitivity} was studied in~\cite{GroupoidMorita} and a sheaf representation of $\Bbbk\mathscr G$-modules was obtained in~\cite{groupoidbundles}. Recently, the growth of \'etale groupoid algebras was studied in~\cite{Nekgrpd}.

The algebra $\Bbbk\mathscr G$ admits an involution $\p\mapsto \check{\p}$ where $\check{\p}(g)=\p(g\inv)$ and so dual notions like right and left primitivity are equivalent for $\Bbbk\mathscr G$.  Hence we shall work alternatively with right and left modules as we find convenient.  The algebra $\Bbbk\mathscr G$ is unital if and only if $\mathscr G\skel 0$ is compact, but it always has local units~\cite{mygroupoidalgebra,groupoidbundles}.

\section{Simplicity of groupoid algebras}
The results of this section were first obtained in~\cite{operatorsimple1} for Hausdorff groupoids over $\Bbbk=\mathbb C$ (and the norm on $\mathbb C$ was used in the proof). Here we give proofs that avoid using the norm, drop the Hausdorff condition whenever possible and consider more ground general rings.  Our techniques are different as well, in that we use the Sch\"utzenberger representations of~\cite{mygroupoidalgebra}. These results were announced by the author at the workshop ``Semigroups and Applications" (Uppsala, August 2012), but never previously published.  Similar results were obtained independently in the meantime in~\cite{operatorsimple2}.  Nonetheless, we produce here a complete proof of the simplicity criterion as we shall need some of the intermediary results in the sequel.

\begin{Lemma}\label{cutdowntochar}
Let $\mathscr G$ be an ample groupoid and $\Bbbk$ a commutative ring with unit.  Suppose that $0\neq \varphi\in \Bbbk \mathscr G\skel 0$ and $I$ is an ideal of $\Bbbk \mathscr G$ containing $\varphi$.  Then $I$ contains a non-zero element of the form $k\cdot \chi_U$ where $k\in \Bbbk$ and $U\subseteq \mathscr G\skel 0$ is compact open.
\end{Lemma}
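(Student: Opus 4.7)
The plan is to exploit the fact that $\Bbbk\mathscr G\skel 0$ is a commutative subalgebra of $\Bbbk\mathscr G$ in which $\chi_U\ast\chi_V=\chi_{U\cap V}$ for compact open $U,V\subseteq\mathscr G\skel 0$, and then to ``localize'' $\varphi$ to an atom of the finite Boolean subalgebra it generates.

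First I would rewrite $\varphi$ as a $\Bbbk$-linear combination of characteristic functions of \emph{pairwise disjoint}, nonempty compact open subsets of $\mathscr G\skel 0$. Starting from any expression $\varphi=\sum_j a_j\chi_{V_j}$ with the $V_j$ compact open, this is possible because the compact open subsets of $\mathscr G\skel 0$ form a generalized boolean algebra (finite intersections and relative complements of compact opens are again compact open, since $\mathscr G\skel 0$ is locally compact Hausdorff with a basis of compact opens). Refining the $V_j$ to the atoms of the finite boolean subalgebra they generate, one expands $\varphi$ over those atoms and discards terms with zero coefficient, arriving at
\[
\varphi=\sum_{i=1}^n k_i\chi_{U_i}
\]
with the $U_i$ pairwise disjoint and nonempty compact open and $k_i\in\Bbbk\setminus\{0\}$. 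Because $\varphi\neq 0$, we have $n\geq 1$.

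Next I would convolve on the right by $\chi_{U_1}\in\Bbbk\mathscr G$. Pairwise disjointness of the $U_i$ gives
\[
\varphi\ast\chi_{U_1}=\sum_{i=1}^n k_i\chi_{U_i\cap U_1}=k_1\chi_{U_1}.
\]
Since $I$ is a two-sided ideal of $\Bbbk\mathscr G$ containing $\varphi$, the product $k_1\chi_{U_1}$ lies in $I$; it is nonzero because $k_1\neq 0$ and $U_1\neq\emptyset$, and it has the required form $k\cdot\chi_U$ with $U\subseteq\mathscr G\skel 0$ compact open.

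There is no real obstacle here: the only step requiring any care is the refinement to a disjoint-support expression, which is a routine consequence of the generalized boolean algebra structure on the compact open subsets of $\mathscr G\skel 0$. The lemma is essentially a bookkeeping statement recording that any ideal meeting $\Bbbk\mathscr G\skel 0$ nontrivially can be cut down to a scalar multiple of the characteristic function of a single compact open piece of the unit space.
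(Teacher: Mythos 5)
Your proof is correct and follows essentially the same route as the paper: the paper simply takes $U=\varphi^{-1}(k)$ for a nonzero value $k$ of $\varphi$ (compact open since $\varphi$ is a locally constant function of compact support on $\mathscr G^{(0)}$) and observes $\varphi\ast\chi_U=k\cdot\chi_U\in I$, which is your disjoint-support decomposition and right-convolution step in one stroke.
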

\begin{proof}
Let $k\in \Bbbk$ be a non-zero element in the image of $\p$.  Then $U=\p\inv (k)$ is a non-empty compact open subset of $\mathscr G\skel 0$ and $\p\ast \chi_U = k\cdot \chi_U$ is in $I$.
\end{proof}

The following lemma (and its dual) will also be useful.
\begin{Lemma}\label{hitright}
Let $\p\in \Bbbk\mathscr G$ and $U\in \Bis_c(\mathscr G)$.  Suppose that $h\in U$ and $\dom(g)=\ran(h)$.  Then \[\p\ast \chi_U(gh)=\p(g)\] and so in particular is non-zero if $\p(g)\neq 0$.
\end{Lemma}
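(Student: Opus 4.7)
The plan is to unfold the definition of convolution and exploit the fact that $U$ is a local bisection to reduce the sum to a single term. Concretely,
\[
\varphi \ast \chi_U(gh) \;=\; \sum_{\dom(k)=\dom(gh)} \varphi\bigl((gh)k\inv\bigr)\,\chi_U(k),
\]
where I use that $\dom(gh)=\dom(h)$ since $\dom(g)=\ran(h)$ makes the composition $gh$ well-defined.

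Next I restrict the summation. The factor $\chi_U(k)$ is zero unless $k\in U$, so only those $k\in U$ with $\dom(k)=\dom(h)$ contribute. Since $U$ is a local bisection, the map $\dom|_U\colon U\to \dom(U)$ is a homeomorphism and hence injective; since $h\in U$ already satisfies $\dom(h)=\dom(h)$, the unique such $k$ is $k=h$. Therefore the sum collapses to the single term $\varphi(gh\cdot h\inv)\,\chi_U(h) = \varphi(g)\cdot 1 = \varphi(g)$, which gives the claimed equality. The ``in particular'' clause is immediate since $\varphi\ast\chi_U(gh)=\varphi(g)$ is non-zero precisely when $\varphi(g)\neq 0$.

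There is no real obstacle here: the only subtle point is confirming that the composition $gh\cdot h\inv = g$ makes sense and that $U$ being a bisection forces uniqueness of the preimage of $\dom(h)$ under $\dom|_U$. Both are immediate from the setup in the preliminaries, so this lemma is essentially a one-line computation once the convolution formula from \cite{mygroupoidalgebra} is written out and the bisection property of $U$ is invoked.
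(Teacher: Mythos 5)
Your proof is correct and follows exactly the same route as the paper's: unfold the convolution, use that $\dom|_U$ is injective to see $h$ is the unique element of $U$ with domain $\dom(gh)$, and collapse the sum to $\varphi(ghh^{-1})=\varphi(g)$. Nothing to add.
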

\begin{proof}
The unique element $y\in U$ with $\dom(y)=\dom(gh)$ is $h$ and so \[\p\ast \chi_U(gh)=\sum_{\dom(y)=\dom(gh)}\p(ghy\inv)\chi_U(y)=\p(g)\] as required.
\end{proof}

Now we prove an analogue of the Cuntz-Krieger uniqueness theorem in the context of Hausdorff ample groupoids.  The proof is based on the idea of~\cite{operatorsimple1}, but we avoid using the norm.  A similar result is embedded in the proof of the main result of~\cite{operatorsimple2}.

\begin{Prop}\label{effectivecase}
Let $\mathscr G$ be an effective Hausdorff ample groupoid and $\Bbbk$ a commutative ring with unit.  Suppose that $I$ is a non-zero ideal of $\Bbbk \mathscr G$.  Then $I$ contains a non-zero element of the form $k\cdot \chi_U$ with $k\in \Bbbk\setminus \{0\}$ and $U\subseteq \mathscr G\skel 0$ a non-empty compact open subset.
\end{Prop}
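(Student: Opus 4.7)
The plan is to produce a non-zero element of $I\cap \Bbbk\mathscr G\skel 0$, after which Lemma \ref{cutdowntochar} finishes the proof. Given $0\neq\varphi\in I$, I would first arrange that $\varphi|_{\mathscr G\skel 0}\neq 0$: pick $g_0$ with $\varphi(g_0)=k\neq 0$ and $U_0\in\Bis_c(\mathscr G)$ containing $g_0$, so that Lemma \ref{hitright} applied to $h=g_0^{-1}\in U_0^{-1}$ and $g=g_0$ gives $(\varphi\ast\chi_{U_0^{-1}})(\ran(g_0))=k\neq 0$ at the unit $\ran(g_0)$. Replacing $\varphi$ by $\varphi\ast\chi_{U_0^{-1}}\in I$, I may assume $\varphi|_{\mathscr G\skel 0}\neq 0$. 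Since $\mathscr G\skel 1$ is Hausdorff, $\mathscr G\skel 0$ is clopen in it (open by étaleness, closed as the equalizer of $\dom$ and the identity map into a Hausdorff space). Splitting each bisection of a decomposition of $\varphi$ along $\mathscr G\skel 0$ and refining to disjoint pieces yields
\[\varphi=\sum_{i=1}^m k_i\chi_{W_i}+\sum_{j=1}^r \ell_j\chi_{V_j}=:\varphi_d+\varphi_o,\]
where the $W_i\subseteq\mathscr G\skel 0$ are disjoint compact open, the $V_j\in\Bis_c(\mathscr G)$ are pairwise disjoint and disjoint from $\mathscr G\skel 0$, all coefficients are non-zero, and $m\geq 1$ by the reduction.

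The central step is to find a compact open $V\subseteq W_1$ with $\chi_V\ast\chi_{V_j}\ast\chi_V=0$ for every $j$. Let $\tau_j\colon\dom(V_j)\to\ran(V_j)$ be the partial homeomorphism induced by $V_j$ and set $F_j=\{x\in\dom(V_j):\tau_j(x)=x\}$. Observe that $\dom(V_j)$ is clopen in $\mathscr G\skel 0$ (image of a compact open set under a local homeomorphism in Hausdorff space) and $F_j$ is closed in $\mathscr G\skel 0$ (equalizer of continuous maps into Hausdorff $\mathscr G\skel 0$). The key use of effectiveness is that each $F_j$ has empty interior: any non-empty open $T\subseteq F_j$ would make $V_j\cap\dom^{-1}(T)$ a non-empty open subset of $\Is(\mathscr G)\skel 1$, hence contained in $\mathrm{Int}(\Is(\mathscr G)\skel 1)=\mathscr G\skel 0$, contradicting $V_j\cap\mathscr G\skel 0=\emptyset$. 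A finite union of closed nowhere dense sets is nowhere dense, so I can pick $x_0\in W_1\setminus\bigcup_j F_j$.

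For each $j$ I would then build an open neighborhood $T_j$ of $x_0$: if $x_0\notin\dom(V_j)$ set $T_j=\mathscr G\skel 0\setminus\dom(V_j)$; if $x_0\in\dom(V_j)$, use Hausdorffness of $\mathscr G\skel 0$ to separate $x_0$ from $\tau_j(x_0)\neq x_0$ by disjoint opens $A,B$ and put $T_j=A\cap\tau_j^{-1}(B)\subseteq\dom(V_j)$, so that $\tau_j(T_j)\cap T_j\subseteq B\cap A=\emptyset$. Using the compact open basis of $\mathscr G\skel 0$, pick a compact open $V$ with $x_0\in V\subseteq W_1\cap\bigcap_j T_j$. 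In every case $V_j\cap\dom^{-1}(V)\cap\ran^{-1}(V)=\emptyset$, so $\chi_V\ast\chi_{V_j}\ast\chi_V=\chi_{VV_jV}=0$, while $\chi_V\ast\varphi_d\ast\chi_V=k_1\chi_V$ because multiplication in $\Bbbk\mathscr G\skel 0$ is pointwise and $V\subseteq W_1$ is disjoint from the other $W_i$. Therefore $\chi_V\ast\varphi\ast\chi_V=k_1\chi_V\in I$ is the required non-zero element.

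The main obstacle I anticipate is the uniformity in the last two steps: producing a single compact open $V$ that simultaneously annihilates every off-diagonal bisection $V_j$. This is exactly where Hausdorffness of $\mathscr G\skel 1$ (giving a finite disjoint decomposition of $\varphi$ and a clean closure calculus so that the $F_j$ are closed) and effectiveness (forcing each $F_j$ to be nowhere dense) must work together.
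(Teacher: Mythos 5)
Your proposal is correct and follows essentially the same route as the paper: reduce via Lemma~\ref{hitright} to an element of $I$ with non-zero restriction to $\mathscr G\skel 0$, split off the diagonal part using that $\mathscr G\skel 0$ is clopen, and then find a compact open $V\subseteq U_1$ whose conjugation kills the off-diagonal part. The only difference is that where the paper invokes \cite[Lemma~3.1]{operatorsimple1} for the existence of such a $V$, you prove that statement directly (via the nowhere-density of the fixed-point sets $F_j$, which is exactly where effectiveness enters); your argument for that step is sound.
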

\begin{proof}
Let $\p\in I\setminus \{0\}$ and suppose that $\p(g)\neq 0$ with $g\in \mathscr G\skel 1$.  Let $U\in \Bis_c(\mathscr G)$ contain $g\inv$ and put $x=\ran(g)$. Then $\p\ast \chi_U(x) =\p(g)\neq 0$ by Lemma~\ref{hitright}.  Thus there is an element $\psi\in I$ with $\psi|_{\mathscr G\skel 0}\neq 0$ (take $\psi=\p\ast \chi_U$ as above).  As $\mathscr G\skel 0$ is clopen in $\mathscr G\skel 1$ by the Hausdorff property, we have that $\psi|_{\mathscr G\skel 0}\in \Bbbk \mathscr G\skel 0$.  Let $K$ be the support of $\Psi=\psi-\psi|_{\mathscr G\skel 0}$.  Then $K\subseteq \mathscr G\skel 1\setminus \mathscr G\skel 0$ is compact open.

Write $\psi|_{\mathscr G\skel 0} = \sum_{i=1}^m k_i\cdot \chi_{U_i}$ with the $U_i$ disjoint compact open subsets of $\mathscr G\skel 0$ and the $k_i\in \Bbbk\setminus \{0\}$.  By~\cite[Lemma~3.1]{operatorsimple1}, the effectiveness of $\mathscr G$ implies that there is a non-empty open subset $V\subseteq U_1$ with $VKV=
\emptyset$.  Since $\mathscr G\skel 0$ has a basis of compact open sets, we may assume that $V$ is compact.  We claim that $k_1\cdot \chi_V=\chi_V\ast \psi\ast \chi_V\in I$.  Indeed, Lemma~\ref{hitright} and its dual imply $\chi_V\ast \Psi\ast \chi_V=0$ because $VKV=\emptyset$.  Thus $\chi_V\ast \psi\ast \chi_V= \chi_V\ast \psi|_{\mathscr G\skel 0}\ast \chi_V=k_1\cdot \chi_V$ is in $I$, as required.
\end{proof}

Our next result removes the Hausdorff condition from a result of~\cite{operatorsimple1} and works over any base ring.  See also~\cite{operatorsimple2} where more or less the same result is obtained in a slightly different way.

Let $\mathscr G$ be an ample groupoid and $\Bbbk$ a commutative ring with unit.
If $x\in \mathscr G\skel 0$, we put $L_x=\dom^{-1}(x)$.  There is a $\Bbbk \mathscr G$-module structure on $\Bbbk L_x$ given by \[\p\cdot t=\sum_{y\in L_x}\p(yt\inv)y=\sum_{d(g)=r(t)}\p(g)gt\] for $t\in L_x$.  Moreover, if $U\in \Bis_c(\mathscr G)$,
\begin{equation*}\label{schutzformula}
\chi_U\cdot t=\begin{cases}ht, & \text{if}\ h\in U,\ \dom(h)=\ran(t)\\ 0, & \text{if}\ \ran(t)\notin \dom(U)=U^*U\end{cases}
\end{equation*}
for $t\in L_x$.
Note that if $\ran(t)\in U^*U$, then $h$ above is unique.  Also the isotropy group $G_x$ of $\mathscr G$ at $x$ acts freely on the right of $L_x$ and $\Bbbk L_x$ is a $\Bbbk \mathscr G$-$\Bbbk G_x$-bimodule.  See~\cite[Proposition~7.8]{mygroupoidalgebra} for details.  If $V$ is a $\Bbbk G_x$-module, then $\Ind_x(V)=\Bbbk L_x\otimes_{\Bbbk G_x} V$ is a $\Bbbk\mathscr G$-module.  Moreover, the functor $\Ind_x$ is exact and sends (semi)simple modules to (semi)simple modules (see~\cite[Proposition~7.19]{mygroupoidalgebra} for the latter property).

The annihilator ideal of $\Bbbk L_x$ is a proper ideal because if $U$ is a compact open neighborhood of $x$ in $\mathscr G\skel 0$, then $\chi_U\cdot x=x$.  If we give $\Bbbk$ the trivial $\Bbbk G_x$-module structure, then $\Ind_x(\Bbbk)\cong \Bbbk \mathcal O_x$ with the action given by \[\p\cdot u=\sum_{\dom(g)=u}\p(g)\ran(g)\] for $u\in \mathcal O_y$. In particular, if $U\in \Bis_c(\mathscr G)$, then \[\chi_U\cdot u=\begin{cases} U\cdot u, & \text{if}\ u\in U^*U\\ 0, & \text{else.}\end{cases}\] If $x\in U$ with $U\in \Bis_c(\mathscr G\skel 0)$, then $\chi_U\cdot x=x$ and so the annihilator of $\Bbbk O_x$ is proper.

\begin{Prop}\label{minimal}
Let $\mathscr G$ be an ample groupoid and $\Bbbk$ a commutative ring with unit.  Then $\mathscr G$ is minimal if and only if $\chi_U$ generates $\Bbbk \mathscr G$ as an ideal for all $U\subseteq \mathscr G\skel 0$ a non-empty compact open subset.
\end{Prop}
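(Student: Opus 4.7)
My plan is to prove the two directions separately, relying on the facts recalled just above the statement: orbits of an ample groupoid coincide with $\Bis_c(\mathscr G)$-orbits on $\mathscr G\skel 0$, and each induced module $\Bbbk \mathcal O_x$ has a proper annihilator because $\chi_U\cdot x=x$ whenever $U\in \Bis_c(\mathscr G\skel 0)$ contains $x$.

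For the converse direction, suppose $\mathscr G$ is not minimal and pick a proper non-empty closed invariant subset $C\subsetneq \mathscr G\skel 0$; choose any non-empty compact open $U\subseteq \mathscr G\skel 0\setminus C$, and pick $x\in C$. Since $\mathcal O_x\subseteq C$ is disjoint from $U$, the formula for $\chi_U$ acting on $\Bbbk \mathcal O_x$ gives $\chi_U\cdot u=0$ for every $u\in \mathcal O_x$. Thus $\chi_U$ lies in the annihilator of $\Bbbk \mathcal O_x$, which is a proper two-sided ideal, and so the ideal generated by $\chi_U$ is proper.

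For the forward direction, assume $\mathscr G$ is minimal, fix a non-empty compact open $U\subseteq \mathscr G\skel 0$, and let $I$ be the two-sided ideal it generates. The key computation is that for every $V\in \Bis_c(\mathscr G)$,
\[
\chi_V\ast \chi_U\ast \chi_{V^*}=\chi_{V\cdot U},
\]
where $V\cdot U:=\ran(V\cap \dom\inv(U))$ is compact open in $\mathscr G\skel 0$; hence $\chi_{V\cdot U}\in I$. As $V$ varies, the sets $V\cdot U$ exhaust the $\Bis_c(\mathscr G)$-saturation of $U$, which equals $\mathscr G\skel 0$ by minimality. Given any compact open $W\subseteq \mathscr G\skel 0$, compactness yields a finite subcover $W\subseteq V_1\cdot U\cup\cdots\cup V_n\cdot U$; since convolution equals pointwise multiplication of characteristic functions on subsets of $\mathscr G\skel 0$, each $\chi_{W\cap V_i\cdot U}=\chi_W\ast \chi_{V_i\cdot U}$ lies in $I$, and inclusion-exclusion then yields $\chi_W\in I$. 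Finally, for any $V\in \Bis_c(\mathscr G)$ one has $\chi_V=\chi_V\ast \chi_{V^*V}\in I$, and since such $\chi_V$ span $\Bbbk\mathscr G$, we conclude $I=\Bbbk\mathscr G$.

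The most delicate step I foresee is passing from the fact that $\chi_{V\cdot U}\in I$ for a single bisection translate to the statement $\chi_W\in I$ for an arbitrary compact open $W\subseteq \mathscr G\skel 0$: the translates covering $W$ overlap uncontrollably, and what makes the argument work is the commutativity of $\Bbbk\mathscr G\skel 0$, where convolution is just pointwise multiplication, so the inclusion-exclusion calculation stays entirely inside $I$.
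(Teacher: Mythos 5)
Your proof is correct and follows essentially the same route as the paper: the non-minimal direction uses the proper annihilator of $\Bbbk\mathcal O_x$ (the paper uses $\Bbbk L_x$, an immaterial variant), and the minimal direction translates $U$ by conjugating $\chi_U$ with compact open bisections and then uses the generalized boolean algebra structure of $\Bis_c(\mathscr G\skel 0)$ to capture every $\chi_W$ and hence every $\chi_V=\chi_V\ast\chi_{V^*V}$. The paper merely packages the same computation in the language of the inverse semigroup ideal $S=\{V\in\Bis_c(\mathscr G)\mid \chi_V\in I\}$, where your $V\cdot U$ appears as $(UV)^*UV$ and your inclusion--exclusion step is the closure of $E(S)$ under finite unions.
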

\begin{proof}
Suppose first that $\chi_U$ generates $\Bbbk \mathscr G$ as an ideal for all non-empty compact open subsets $U\in \mathscr G\skel 0$.  Let $x\in \mathscr G\skel 0$.  The annihilator of $\Bbbk L_x$ is a proper ideal in $\Bbbk\mathscr G$ and therefore $\chi_U$ does not annihilate $\Bbbk L_x$.  Thus there exists $t\in L_x$ with $\chi_U\cdot t\neq 0$.  This means that $\ran(t)\in U$ and so $U\cap \mathcal O_x\neq \emptyset$.  We conclude that all orbits are dense and hence $\mathscr G$ is minimal.

Next suppose that $\mathscr G$ is minimal and let $I$ be the ideal generated by $\chi_U$.  Let $S=\{V\in \Bis_c(\mathscr G)\mid \chi_V\in I\}$.  Then $S$ is an ideal of $\Bis_c(\mathscr G)$ and $E(S)$ is a generalized boolean algebra (since $\chi_{V_1\cup V_2}=\chi_{V_1}+\chi_{V_2}-\chi_{V_1}\ast \chi_{V_2}$ and $\chi_{V_1\setminus V_2}=\chi_{V_1}-\chi_{V_1}\ast\chi_{V_2}$ for $V_1,V_2\in \Bis_c(\mathscr G\skel 0)$).  We claim that $E(S)=\Bis_c(\mathscr G\skel 0)$. Indeed, if $x\in \mathscr G\skel 0$, then since $\mathscr G$ is minimal we have that $U\cap \mathcal O_x$ is non-empty.  Suppose that $y\in U\cap \mathcal O_x$ and $g\in \mathscr G\skel 1$ with $\ran(g)=y$ and $\dom(g)=x$.  Let $V\in \Bis_c(\mathscr G)$ with $g\in V$.  Then $x\in (UV)^*UV\in E(S)$.  Thus $E(S)$ contains a compact open neighborhood of each point of $\mathscr G\skel 0$.  Also $E(S)$ is an order ideal in $\Bis_c(\mathscr G\skel 0)$.  Thus $E(S)$ contains a basis of compact open subsets of $\mathscr G\skel 0$.  But then since $E(S)$ is closed under finite unions, we conclude $E(S)=\Bis_c(\mathscr G\skel 0)$.  But no proper ideal in an inverse semigroup can contain all the idempotents and so $S=\Bis_c(\mathscr G)$, whence $I=\Bbbk \mathscr G$.
\end{proof}

The following theorem generalizes the results of~\cite{operatorsimple1} and was obtained in~\cite{operatorsimple2} using a similar method.

\begin{Thm}\label{simple}
Let $\mathscr G$ be an ample groupoid and $\Bbbk$ a field.  If $\Bbbk \mathscr G$ is simple, then $\mathscr G$ is effective and minimal.  The converse holds if $\mathscr G$ is Hausdorff.
\end{Thm}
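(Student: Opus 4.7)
The plan is to extract minimality and effectiveness directly from Propositions~\ref{minimal} and~\ref{effectivecase}, supplemented by an orbit--representation argument for the forward direction.

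I would first dispose of minimality: if $\Bbbk\mathscr G$ is simple and $U\subseteq \mathscr G\skel 0$ is any non-empty compact open subset, then $\chi_U\neq 0$, so the non-zero two-sided ideal it generates must equal $\Bbbk\mathscr G$. By Proposition~\ref{minimal} this forces $\mathscr G$ to be minimal.

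Next I would handle effectiveness by contradiction. Suppose $\mathscr G$ is not effective and pick $g\in \mathrm{Int}(\Is(\mathscr G)\skel 1)\setminus \mathscr G\skel 0$; since $\Bis_c(\mathscr G)$ is a basis for the topology, choose $U\in \Bis_c(\mathscr G)$ with $g\in U\subseteq \mathrm{Int}(\Is(\mathscr G)\skel 1)$, so every element of $U$ is an isotropy arrow. Consider the element $\psi=\chi_U-\chi_{\dom(U)}\in\Bbbk\mathscr G$, which is non-zero because $\psi(g)=1-0=1$. I would then show $\psi$ annihilates every orbit module $\Bbbk\mathcal O_y$ by using the displayed formula for the action: for $u\in\mathcal O_y\cap\dom(U)=U^\ast U\cap\mathcal O_y$, the unique $h\in U$ with $\dom(h)=u$ is an isotropy arrow, so $\chi_U\cdot u=\ran(h)=u=\chi_{\dom(U)}\cdot u$, while for $u\notin\dom(U)$ both summands vanish. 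Each annihilator $\mathrm{Ann}(\Bbbk\mathcal O_y)$ is a proper two-sided ideal (it cannot contain $\chi_V$ for $V\in\Bis_c(\mathscr G\skel 0)$ any compact open neighborhood of $y$, because $\chi_V\cdot y=y\neq 0$), so the non-zero ideal generated by $\psi$ is proper, contradicting simplicity.

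For the converse, assume $\mathscr G$ is Hausdorff, effective and minimal, and let $I$ be a non-zero ideal of $\Bbbk\mathscr G$. Proposition~\ref{effectivecase} (which is exactly where the Hausdorff hypothesis is needed) yields a non-zero element of the form $k\cdot\chi_U\in I$ with $k\in\Bbbk\setminus\{0\}$ and $U\subseteq\mathscr G\skel 0$ a non-empty compact open subset; since $\Bbbk$ is a field, $k$ is invertible and $\chi_U\in I$. Proposition~\ref{minimal} combined with minimality then says that $\chi_U$ generates $\Bbbk\mathscr G$ as an ideal, so $I=\Bbbk\mathscr G$. The only genuinely creative ingredient is the construction of $\psi=\chi_U-\chi_{\dom(U)}$ as a non-zero element that annihilates every orbit representation; everything else is bookkeeping using the preceding two propositions.
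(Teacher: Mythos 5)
Your proposal is correct and follows essentially the same route as the paper: minimality via Proposition~\ref{minimal}, effectiveness by showing that $\chi_U-\chi_{U^*U}$ (your $\chi_U-\chi_{\dom(U)}$) annihilates every orbit module $\Bbbk\mathcal O_y$ while those modules have proper, hence zero, annihilators, and the converse by combining Proposition~\ref{effectivecase} with Proposition~\ref{minimal}. The only cosmetic difference is that you phrase the effectiveness step as a contradiction starting from a point $g\in\mathrm{Int}(\Is(\mathscr G)\skel 1)\setminus\mathscr G\skel 0$, whereas the paper argues directly that any $U\in\Bis_c(\mathscr G)$ contained in $\Is(\mathscr G)$ must equal $U^*U$.
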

\begin{proof}
Suppose first that $\Bbbk \mathscr G$ is simple. Minimality is immediate from Proposition~\ref{minimal}. To show that $\mathscr G$ is effective, it suffices to prove that if $U\in \Bis_c(\mathscr G)$ is contained in $\Is(\mathscr G)$, then it is contained in $\mathscr G\skel 0$. Of course, we may assume that $U$ is non-empty. Because $U$ is contained in the isotropy subgroupoid, it follows that $\chi_U-\chi_{U^*U}$ annihilates each of the modules $\Bbbk \mathcal O_x$ with $x\in \mathscr G\skel 0$.  But these modules have proper annihilator ideals and hence are faithful by simplicity of $\Bbbk\mathscr G$.  We conclude that $U=U^*U$ and hence $U\subseteq \mathscr G\skel 0$.

Assume next that $\mathscr G$ is effective, minimal and  Hausdorff.  If $I$ is a non-zero ideal of $\Bbbk \mathscr G$, then by Proposition~\ref{effectivecase} it contains an element of the form $\chi_U$ with $\emptyset\neq U\in \Bis_c(\mathscr G\skel 0)$. We conclude from Proposition~\ref{minimal} that $I=\Bbbk \mathscr G$.  This completes the proof.
\end{proof}

We leave it as an open question as to whether the Hausdorff condition is really needed in the converse (we suspect that it is).  Basically it boils down to whether Proposition~\ref{effectivecase} is true when $\mathscr G$ is not Hausdorff.

\begin{Cor}\label{simplehaus}
Let $\mathscr G$ be a Hausdorff ample groupoid and $\Bbbk$ a field.  Then $\Bbbk \mathscr G$ is simple if and only if $\mathscr G$ is effective and minimal.
\end{Cor}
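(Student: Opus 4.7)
The corollary packages both directions of Theorem \ref{simple} under the standing assumption that $\mathscr G$ is Hausdorff, so my plan is simply to observe that nothing new is required. The forward implication---if $\Bbbk\mathscr G$ is simple, then $\mathscr G$ is effective and minimal---is the first assertion of Theorem \ref{simple}, which holds for any ample groupoid and does not use the Hausdorff hypothesis at all. The reverse implication---if $\mathscr G$ is effective, minimal, and Hausdorff, then $\Bbbk\mathscr G$ is simple---is precisely the second sentence of Theorem \ref{simple}. So the corollary follows by citing the theorem; the proof is a single line.

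If I were to unpack the argument without explicitly invoking Theorem \ref{simple}, I would proceed as follows. For the forward direction, given simplicity of $\Bbbk \mathscr G$, I would first apply Proposition \ref{minimal}: simplicity forces $\chi_U$ to generate $\Bbbk \mathscr G$ for every nonempty compact open $U\subseteq \mathscr G\skel 0$, yielding minimality. To get effectiveness, I would take any nonempty $U\in \Bis_c(\mathscr G)$ contained in the isotropy subgroupoid and observe that $\chi_U - \chi_{U^*U}$ annihilates every orbit module $\Bbbk\mathcal O_x$; since these modules have proper annihilator ideals, simplicity forces them to be faithful, so $\chi_U = \chi_{U^*U}$ and hence $U\subseteq \mathscr G\skel 0$.

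For the reverse direction, I would take a nonzero ideal $I$ of $\Bbbk\mathscr G$. Here the Hausdorff hypothesis is used to invoke Proposition \ref{effectivecase}, producing a nonzero element of the form $k\cdot \chi_U\in I$ with $U\in \Bis_c(\mathscr G\skel 0)$ nonempty and $k\in \Bbbk\setminus\{0\}$. Since $\Bbbk$ is a field, scaling gives $\chi_U\in I$, and then Proposition \ref{minimal} together with minimality of $\mathscr G$ forces $I = \Bbbk\mathscr G$. Hence $\Bbbk\mathscr G$ is simple.

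There is no real obstacle: all the nontrivial content has already been distilled into Theorem \ref{simple} and, behind it, into Proposition \ref{effectivecase} (where the Hausdorff hypothesis is essential, via the Cuntz--Krieger-style production of a compact open $V$ with $VKV=\emptyset$) and Proposition \ref{minimal}. The corollary is just a convenient bi-implicational repackaging for later reference.
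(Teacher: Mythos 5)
Your proposal is correct and matches the paper exactly: the corollary is stated without a separate proof precisely because it is the immediate conjunction of the two assertions of Theorem~\ref{simple} under the Hausdorff hypothesis. Your unpacked version also faithfully reproduces the arguments the paper gives for that theorem (Proposition~\ref{minimal} plus the orbit-module argument for one direction, Proposition~\ref{effectivecase} plus Proposition~\ref{minimal} for the other), so there is nothing to add.
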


We end this section with two further results on effective Hausdorff ample groupoids.
The first result, characterizing the center of an effective Hausdorff groupoid, generalizes a result of~\cite{operatorsimple2}.

\begin{Prop}
Let $\mathscr G$ be an effective Hausdorff ample groupoid and $\Bbbk$ a commutative ring with unit.  Then the center $Z(\Bbbk \mathscr G)$ of $\Bbbk \mathscr G$ consists of those continuous functions with compact support $\p\colon \mathscr G\skel 0\to \Bbbk$ which are constant on orbits.  In particular, if $\mathscr G\skel 0$ has a dense orbit, then the equality \[Z(\Bbbk \mathscr G)=\begin{cases} \Bbbk\cdot \chi_{\mathscr G\skel 0}, & \text{if}\ \mathscr G\skel 0\ \text{is compact}\\ 0, & \text{else}\end{cases}\] holds.
\end{Prop}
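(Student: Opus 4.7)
The plan is to show that any central element must be supported on the unit space $\mathscr G\skel 0$, and then to characterize the central elements of $\Bbbk\mathscr G\skel 0$ directly. The dense-orbit case follows by continuity into the discrete ring $\Bbbk$.

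\emph{Step 1: reduction to $\Bbbk\mathscr G\skel 0$.} Fix $\p \in Z(\Bbbk\mathscr G)$. Since $\mathscr G$ is Hausdorff, $\p$ is a continuous compactly supported function into discrete $\Bbbk$, so $\supp(\p)$ is clopen in $\mathscr G\skel 1$. Suppose for contradiction that some $g \in \supp(\p)$ satisfies $x := \dom(g) \neq \ran(g) =: y$. Using Hausdorffness of $\mathscr G\skel 0$, pick a compact open $V \subseteq \mathscr G\skel 0$ containing $x$ but not $y$; this $V$ is automatically a compact open local bisection. A direct convolution computation in the spirit of Lemma~\ref{hitright} (noting that on a unit $\chi_V$ forces the summation variable to reduce to a single term) yields $(\p \ast \chi_V)(g) = \chi_V(x) \p(g) = \p(g) \neq 0$ while $(\chi_V \ast \p)(g) = \chi_V(y) \p(g) = 0$, contradicting centrality. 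Hence $\supp(\p) \subseteq \Is(\mathscr G)$. Since $\supp(\p)$ is open and effectiveness means $\mathrm{Int}(\Is(\mathscr G)\skel 1) = \mathscr G\skel 0$, we conclude $\supp(\p) \subseteq \mathscr G\skel 0$.

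\emph{Step 2: orbit invariance.} For $\p \in \Bbbk\mathscr G\skel 0$ and $U \in \Bis_c(\mathscr G)$, a short calculation using that $\p$ is supported on units (so in each convolution sum only one term survives) gives
\[
(\chi_U \ast \p)(g) = \chi_U(g)\, \p(\dom(g)) \qquad \text{and} \qquad (\p \ast \chi_U)(g) = \chi_U(g)\, \p(\ran(g))
\]
for every $g \in \mathscr G\skel 1$. Since $\Bbbk\mathscr G$ is spanned by the $\chi_U$ with $U \in \Bis_c(\mathscr G)$, and since local bisections cover $\mathscr G\skel 1$, $\p$ commutes with every $\chi_U$ if and only if $\p(\dom(g)) = \p(\ran(g))$ for all $g \in \mathscr G\skel 1$, i.e., if and only if $\p$ is constant on orbits. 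Together with Step 1 this proves the first claim.

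\emph{Step 3: dense orbit.} If $\mathcal O$ is dense in $\mathscr G\skel 0$ and $\p \in Z(\Bbbk\mathscr G)$, then by the first part $\p$ is continuous into discrete $\Bbbk$ and takes a single value $k \in \Bbbk$ on $\mathcal O$. The preimage $\p\inv(k)$ is closed and contains the dense set $\mathcal O$, hence equals $\mathscr G\skel 0$, so $\p = k \cdot \chi_{\mathscr G\skel 0}$. Compact support of $\p$ forces $k = 0$ unless $\mathscr G\skel 0$ itself is compact, giving the displayed formula.

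\emph{Main obstacle.} The only genuine work is Step 1, where three ingredients must be combined: clopenness of $\supp(\p)$ (Hausdorffness of $\mathscr G$), separation of distinct units (Hausdorffness of $\mathscr G\skel 0$), and effectiveness in the form $\mathrm{Int}(\Is(\mathscr G)\skel 1) = \mathscr G\skel 0$ to rule out any mass on non-identity isotropy arrows. Steps 2 and 3 are purely computational once $\p$ is known to be supported on units.
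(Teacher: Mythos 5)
Your proof is correct and follows essentially the same route as the paper: show the support lies in the isotropy subgroupoid, use openness of the support (Hausdorffness) plus effectiveness to push it into $\mathscr G\skel 0$, then read off constancy on orbits and the dense-orbit consequence. The only difference is that where the paper cites \cite[Proposition~4.13]{mygroupoidalgebra} for the characterization of central elements, you prove the needed containment $\supp(\p)\subseteq\Is(\mathscr G)$ directly by the convolution computation with $\chi_V$, which is a valid self-contained substitute.
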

\begin{proof}
By~\cite[Proposition~4.13]{mygroupoidalgebra} $\p\in Z(\Bbbk \mathscr G)$ if and only if $\p$ is supported on $\Is(\mathscr G)$ and $\p(gzg\inv)=\p(z)$ whenever $\dom(g)=\ran(z)=\dom(z)$.  Since the support of $\p$ is open (because $\mathscr G$ is Hausdorff), we in fact have that the support of $\p$ is contained in $\mathscr G\skel 0$ because $\mathscr G$ is effective.  It is now immediate that $Z(\Bbbk \mathscr G)$ consists of those $\p\in \Bbbk\mathscr G\skel 0$ which are constant on orbits.

The final statement is clear since if $\mathcal O$ is a dense orbit and $\p\in Z(\mathscr G)$, then $\p$ is constant on $\mathcal O$ and hence on $\ov {\mathcal O}=\mathscr G\skel 0$.
\end{proof}

We record here another result for effective Hausdorff groupoids.

\begin{Prop}
Let $\mathscr G$ be an effective Hausdorff ample groupoid and $\Bbbk$ a commutative ring with unit.  Then $\Bbbk \mathscr G\skel 0$ is a maximal commutative subalgebra of $\Bbbk\mathscr G$.
\end{Prop}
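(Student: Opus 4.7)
The plan is to fix a $\p\in \Bbbk\mathscr G$ that centralizes $\Bbbk\mathscr G\skel 0$ and show $\supp(\p)\subseteq \mathscr G\skel 0$; commutativity of $\Bbbk\mathscr G\skel 0$ was already noted, so this suffices. Since $\Bbbk\mathscr G\skel 0$ is spanned by $\chi_U$ with $U\in \Bis_c(\mathscr G\skel 0)$, it is enough to know $\chi_U\ast\p = \p\ast \chi_U$ for every such $U$.

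First I would compute the two convolutions explicitly from the defining formula: for $g\in \mathscr G\skel 1$,
\[
(\chi_U\ast \p)(g)=\chi_U(\ran(g))\p(g)\quad\text{and}\quad (\p\ast \chi_U)(g)=\p(g)\chi_U(\dom(g)),
\]
the only surviving term in each sum being $h=g$ or $h=\dom(g)$ respectively because elements of $U$ are units. Thus for any $g$ in the support of $\p$ one has $\chi_U(\ran(g))=\chi_U(\dom(g))$ for every $U\in \Bis_c(\mathscr G\skel 0)$. Since $\mathscr G\skel 0$ is locally compact Hausdorff with a basis of compact open sets, these characteristic functions separate points, forcing $\dom(g)=\ran(g)$. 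Hence $\supp(\p)\subseteq \Is(\mathscr G)\skel 1$.

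Next I would argue that $\supp(\p)$ is open in $\mathscr G\skel 1$. This is where Hausdorffness enters: in the Hausdorff case $\Bbbk\mathscr G$ consists of compactly supported continuous functions into the discrete ring $\Bbbk$, so $\{k\}$ is open in $\Bbbk$ for each $k\neq 0$ and $\supp(\p)=\p\inv(\Bbbk\setminus\{0\})$ is open. (Equivalently, one may write $\p$ as a $\Bbbk$-linear combination of $\chi_V$ with $V\in \Bis_c(\mathscr G)$ clopen and use additivity of the support on a Hausdorff space.) Combined with the previous step, the open set $\supp(\p)$ lies in $\Is(\mathscr G)\skel 1$, and by effectiveness $\mathrm{Int}(\Is(\mathscr G)\skel 1)=\mathscr G\skel 0$; therefore $\supp(\p)\subseteq \mathscr G\skel 0$, giving $\p\in \Bbbk\mathscr G\skel 0$.

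There is no real obstacle; the only subtle point is the openness of $\supp(\p)$, which would fail without the Hausdorff hypothesis (arrows from $\mathscr G\skel 0$ and nontrivial isotropy arrows could be topologically inseparable), so the argument genuinely uses both effectiveness and the Hausdorff assumption exactly at the last step.
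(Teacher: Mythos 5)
Your proposal is correct and follows essentially the same route as the paper: the same convolution computation showing that $\chi_U\ast\p$ and $\p\ast\chi_U$ pick out $\chi_U(\ran(g))$ and $\chi_U(\dom(g))$ respectively (the paper phrases this contrapositively by choosing one $U$ separating $\dom(g)$ from $\ran(g)$), the same use of Hausdorffness to make $\supp(\p)$ open, and the same appeal to effectiveness to conclude $\supp(\p)\subseteq\mathscr G\skel 0$. No issues.
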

\begin{proof}
Clearly $\Bbbk \mathscr G\skel 0$ is a commutative subalgebra.  Suppose that $0\neq \p\in \Bbbk\mathscr G$ centralizes $\Bbbk \mathscr G\skel 0$.  Because $\mathscr G$ is Hausdorff, $\p\inv(\Bbbk\setminus \{0\})$ is compact open.  By effectiveness, it suffices to show that the support of $\p$ is contained in $\Is(\mathscr G)$. Suppose $\p(g)\neq 0$ with $\dom(g)\neq \ran(g)$.  Then there exists $U\in \Bis_c(\mathscr G\skel 0)$ such that $\dom(g)\in U$ and $\ran(g)\notin U$.  But then $\p\ast \chi_U(g)=\p(g)\neq 0$ and $\chi_U\ast \p(g)=0$, a contradiction.  This completes the proof.
\end{proof}

\section{Primitivity and semiprimitivity}

Recall that a ring is \emph{primitive} if it has a faithful simple module and \emph{semiprimitive} if it has a faithful semisimple module (cf.~\cite{LamBook}).  We investigate primitivity and semiprimitivity of ample groupoid algebras.  This will allow us to recover results of Domanov~\cite{Domanov} and Munn~\cite{MunnSemiprim,MunnSemiprim2,MunnAlgebraSurvey} for inverse semigroup algebras (with easier proofs) and the semiprimitivity of Leavitt path algebras~\cite{Leavittsemiprimitivity}. Also an extension of the result of Amitsur~\cite{Amitsur} that a group algebra over a field of characteristic $0$, which is not algebraic over $\mathbb Q$, is semiprimitive is obtained for ample groupoid algebras.

\subsection{Semiprimitivity}
We first establish that effective Hausdorff groupoids always have semiprimitive algebras over a semiprimitive base ring.

\begin{Prop}\label{semiprimitivityforeffective}
Let $\mathscr G$ be a Hausdorff effective ample groupoid and $\Bbbk$ a commutative ring with unit.  Then $\Bbbk \mathscr G$ is semiprimitive whenever $\Bbbk$ is semiprimitive.
\end{Prop}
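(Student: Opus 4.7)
The plan is to argue by contradiction, combining Proposition~\ref{effectivecase} with a corner sandwich that pulls $J(\Bbbk\mathscr G)$ down to a quasi-regularity statement for a scalar in $\Bbbk$. Suppose $J(\Bbbk\mathscr G)\neq 0$. Since $J(\Bbbk\mathscr G)$ is a non-zero two-sided ideal and $\mathscr G$ is Hausdorff, effective and ample, Proposition~\ref{effectivecase} supplies $k\in \Bbbk\setminus\{0\}$ and a non-empty compact open $V\subseteq \mathscr G\skel 0$ with $k\chi_V\in J(\Bbbk\mathscr G)$.

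Next I would pass to the corner $A:=\chi_V\Bbbk\mathscr G\chi_V$, a unital $\Bbbk$-algebra with identity $\chi_V$, and show that $k\chi_V\in J(A)$. For any $b\in A$, the element $kb=(k\chi_V)\ast b$ lies in $J(\Bbbk\mathscr G)\cap A$; pick a two-sided quasi-inverse $c\in \Bbbk\mathscr G$, i.e.~$kb+c-(kb)\ast c=0=kb+c-c\ast(kb)$. Because $\chi_V$ acts as a two-sided identity on $kb$, sandwiching by $\chi_V$ converts $c$ into $c':=\chi_V\ast c\ast\chi_V\in A$ and yields $(\chi_V-kb)\ast(\chi_V-c')=\chi_V=(\chi_V-c')\ast(\chi_V-kb)$, so $\chi_V-kb$ is invertible in $A$. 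As $b\in A$ was arbitrary, $k\chi_V\in J(A)$, and hence for every $k'\in \Bbbk$ the element $(1-kk')\chi_V=\chi_V-(k\chi_V)\ast(k'\chi_V)$ is invertible in $A$; let $\psi\in A$ denote its inverse.

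The contradiction then comes from a pointwise computation. Fix any $x\in V$. Because $x$ is a unit and $(1-kk')\chi_V$ is supported on $V\subseteq \mathscr G\skel 0$, only the term $h=x$ contributes a non-zero summand to
\[\bigl((1-kk')\chi_V\ast\psi\bigr)(x)=\sum_{\dom(h)=x}(1-kk')\chi_V(xh\inv)\psi(h),\]
since the requirement $xh\inv\in V\subseteq \mathscr G\skel 0$ forces $xh\inv$ to be the unit $x$, whence $h=x$. The sum thus collapses to $(1-kk')\psi(x)$ and must equal $\chi_V(x)=1$, forcing $1-kk'$ to be invertible in $\Bbbk$. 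Since $k'$ was arbitrary, $k\in J(\Bbbk)=0$, contradicting $k\neq 0$.

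The main technical point is the transfer $J(\Bbbk\mathscr G)\to J(A)$: for a unital ring this is the standard identity $eJ(R)e\subseteq J(eRe)$, but $\Bbbk\mathscr G$ is only locally unital, so it needs to be verified directly by the sandwich argument above. Once inside the unital corner $A$, the decisive observation is simply that evaluating the inverse relation at a single point of $V$ converts the invertibility of $(1-kk')\chi_V$ in $A$ into the invertibility of the scalar $1-kk'$ in $\Bbbk$.
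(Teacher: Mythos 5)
Your proof is correct, but it takes a genuinely different route from the paper's. The paper argues positively: it takes a faithful semisimple $\Bbbk$-module $V$, forms $M=\bigoplus_{x\in\mathscr G\skel 0}\Ind_x(V)$ (semisimple because $\Ind_x$ preserves semisimplicity, by~\cite[Proposition~7.19]{mygroupoidalgebra}), and uses Proposition~\ref{effectivecase} only to show that a non-faithful $M$ would be annihilated by some $k\cdot\chi_U$, which is refuted by evaluating on $x\otimes v$. You instead argue by contradiction on the Jacobson radical: Proposition~\ref{effectivecase} puts $k\chi_V$ into $J(\Bbbk\mathscr G)$, your sandwich argument correctly transfers quasi-regularity into the unital corner $A=\chi_V\ast\Bbbk\mathscr G\ast\chi_V$ (indeed $(\chi_V-kb)\ast(\chi_V-c')=\chi_V$ follows from $kb+c-kb\ast c=0$ after multiplying by $\chi_V$ on both sides, since $\chi_V\ast kb=kb=kb\ast\chi_V$), and the point evaluation at $x\in V$ correctly collapses the convolution sum to $(1-kk')\psi(x)=1$, forcing $k\in J(\Bbbk)=0$. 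What your approach buys is that it dispenses with the induction machinery entirely, replacing it with elementary radical manipulations inside a unital corner; what it costs is a reliance on the general theory of the Jacobson radical for non-unital rings, namely that $J(\Bbbk\mathscr G)$ is a quasi-regular two-sided ideal and that its vanishing is equivalent to the paper's definition of semiprimitivity as the existence of a faithful semisimple module. For a ring with local units such as $\Bbbk\mathscr G$ this equivalence is standard, but you should state it explicitly, since it is the one bridge your argument needs that the paper's construction of an explicit faithful semisimple module does not.
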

\begin{proof}
Assume $\Bbbk$ is semiprimitive and let $V$ be a faithful semisimple $\Bbbk$-module.  Consider the $\Bbbk \mathscr G$-module $M=\bigoplus_{x\in \mathscr G\skel 0} \Ind_x(V)$ where we view $V$ as a $\Bbbk G_x$-module via the trivial action of $G_x$.  Then $M$ is a semisimple module by~\cite[Proposition~7.19]{mygroupoidalgebra}.   Suppose that $M$ is not faithful.  Then its annihilator contains an element of the form $k\cdot \chi_U$ with $k\in \Bbbk\setminus \{0\}$ and $U\in \Bis_c(\mathscr G\skel 0)$ non-empty by Proposition~\ref{effectivecase}.  Let $v\in V$ with $kv\neq 0$ and let $x\in U$.  Then $k\cdot \chi_U(x\otimes v) = x\otimes kv\neq 0$ as $\Ind_x(V)= \bigoplus_{y\in \mathcal O_x}y\otimes V$ as a $\Bbbk$-module.  This contradiction shows that $M$ is faithful.
%
\end{proof}

Let $\mathscr G$ be an ample groupoid and $\Bbbk$ a commutative ring with unit.  Let us say that $X\subseteq \mathscr G\skel 0$ is \emph{$\Bbbk$-dense} if, for all $0\neq \p\in \Bbbk \mathscr G$, there exists $g\in \mathscr G\skel 1$ such that:
\begin{enumerate}
\item $\dom(g)\in X$;
\item $\p(g)\neq 0$.
\end{enumerate}
Notice that if $X$ is $\Bbbk$-dense, then so is each $Y\supseteq X$. Also note that the condition $\dom(g)\in X$ could be replaced with $\ran(g)\in X$ by considering $\check{\p}$.

The following proposition justifies the terminology.

\begin{Prop}\label{kdense}
Let $\mathscr G$ be an ample groupoid and $X\subseteq \mathscr G\skel 0$.
\begin{enumerate}
\item If $X$ is $\Bbbk$-dense, then $X$ is dense in $\mathscr G\skel 0$.
\item If $\mathscr G$ is Hausdorff and $X$ is dense in $\mathscr G\skel 0$, then $X$ is $\Bbbk$-dense.
\end{enumerate}
\end{Prop}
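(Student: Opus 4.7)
The statement splits into two independent implications, and both should be short given the machinery already developed. I would prove (1) by contraposition and (2) by directly producing the required arrow $g$ from openness of the domain map.

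For part (1), I would assume $X$ is not dense in $\mathscr G\skel 0$ and exhibit a witness $\p$ for which no $g$ with $\dom(g)\in X$ has $\p(g)\neq 0$, contradicting $\Bbbk$-density. Concretely: if $X$ is not dense, then its complement in $\mathscr G\skel 0$ contains a non-empty open set, and since $\mathscr G$ is ample we may shrink this to a non-empty compact open subset $U\subseteq \mathscr G\skel 0\setminus X$. The element $\p=\chi_U\in \Bbbk\mathscr G\skel 0\subseteq \Bbbk\mathscr G$ is non-zero but its support lies entirely inside $\mathscr G\skel 0\setminus X$. Since $g\in \mathscr G\skel 0$ forces $g=\dom(g)$, any $g$ with $\p(g)\neq 0$ has $\dom(g)\in U$, which is disjoint from $X$. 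This contradicts $\Bbbk$-density of $X$.

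For part (2), I would exploit the Hausdorff hypothesis precisely to know that elements of $\Bbbk\mathscr G$ are continuous compactly supported $\Bbbk$-valued functions on $\mathscr G\skel 1$ (with $\Bbbk$ discrete), as recalled earlier in the paper. Given $0\neq \p\in \Bbbk\mathscr G$, the set $V=\p\inv(\Bbbk\setminus\{0\})$ is then open and non-empty in $\mathscr G\skel 1$. Because $\dom$ is a local homeomorphism, it is an open map, so $\dom(V)$ is a non-empty open subset of $\mathscr G\skel 0$. Density of $X$ yields a point $x\in \dom(V)\cap X$, which by definition of $\dom(V)$ means there exists $g\in V$ with $\dom(g)=x\in X$ and $\p(g)\neq 0$. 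This is exactly the condition for $\Bbbk$-density.

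Neither half poses a serious obstacle; the only subtle point is that the continuity of $\p$ in part (2) genuinely requires the Hausdorff assumption, since in the non-Hausdorff case an element of $\Bbbk\mathscr G$ need not be a continuous function on $\mathscr G\skel 1$ (it is only a $\Bbbk$-linear combination of characteristic functions of compact open bisections, which may fail to be continuous outside $\mathscr G\skel 0$). This is what prevents one from reversing the implication in the non-Hausdorff setting and is in line with the comment following Theorem~\ref{simple} about Proposition~\ref{effectivecase}.
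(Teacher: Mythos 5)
Your proof is correct and follows essentially the same route as the paper: part (1) uses $\chi_U$ for a compact open $U\subseteq\mathscr G\skel 0$ (you phrase it contrapositively, the paper directly, but the witness is identical), and part (2) is verbatim the paper's argument that $V=\p\inv(\Bbbk\setminus\{0\})$ is open in the Hausdorff case and $\dom$ is an open map. Your closing remark correctly identifies why the Hausdorff hypothesis is needed in (2).
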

\begin{proof}
Assume that $X$ is $\Bbbk$-dense and $U\subseteq \mathscr G\skel 0$ is compact open.  Then $\chi_U(x)\neq 0$ for some $x\in X$ and hence $U\cap X\neq \emptyset$.  Thus $X$ is dense in $\mathscr G\skel 0$.

Suppose that $\mathscr G$ is Hausdorff and $X$ is dense.  Let $\p\in \Bbbk\mathscr G$.  Then $V=\p\inv (\Bbbk\setminus \{0\})$ is compact open and hence $\dom(V)$ is open.  Thus there exists $x\in X\cap \dom(V)$, that is, there exists $g\in \mathscr G\skel 1$ with $\p(g)\neq 0$ and $\dom(g)\in X$.
\end{proof}

Example~\ref{examplecliff} below shows that $\Bbbk$-density can be different than density for non-Hausdorff groupoids.

In order to prove our main semiprimitivity result we need a condition under which a module induced from an isotropy group is not annihilated by some element of $\Bbbk\mathscr G$.

\begin{Lemma}\label{l:induceup}
Let $\Bbbk$ be a commutative ring with unit and $\mathscr G$ an ample groupoid.
Let $x\in \mathscr G\skel 0$ and let $V_x$ be a faithful $\Bbbk G_x$-module.  Suppose that $\p\in \Bbbk\mathscr G$ satisfies $\p(g)\neq 0$ for some $g\in \mathscr G\skel 1$ with $\dom(g)\in \mathcal O_x$.  Then $\p\cdot \Ind_x(V_x)\neq 0$.
\end{Lemma}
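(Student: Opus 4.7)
The plan is to exhibit a concrete element of $\Ind_x(V_x)=\Bbbk L_x\otimes_{\Bbbk G_x}V_x$ on which $\p$ acts non-trivially. Since $\dom(g)\in\mathcal O_x$, fix an arrow $h\in L_x$ (i.e.\ $\dom(h)=x$) with $\ran(h)=\dom(g)$; then $gh$ also lies in $L_x$. By the Sch\"utzenberger-type formula recorded before the lemma,
\[
\p\cdot h=\sum_{y\in L_x}\p(yh\inv)\,y,
\]
a finite sum that contains the term $\p(g)\cdot gh$ with non-zero coefficient. The goal is then to find $v\in V_x$ with $\p\cdot(h\otimes v)\neq 0$.

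To control possible cancellation once we tensor over $\Bbbk G_x$, I would exploit that the isotropy group $G_x$ acts freely on the right of $L_x$, so $\Bbbk L_x=\bigoplus_{i\in I}t_i\cdot\Bbbk G_x$ is a free right $\Bbbk G_x$-module on any chosen transversal $\{t_i\}_{i\in I}$ for the $G_x$-orbits of $L_x$. Consequently,
\[
\Ind_x(V_x)=\bigoplus_{i\in I}t_i\otimes V_x
\]
as $\Bbbk$-modules. Regrouping $\p\cdot h$ along these orbits and sliding group elements across the tensor yields, for every $v\in V_x$,
\[
\p\cdot(h\otimes v)=\sum_{i\in I}t_i\otimes f_i(v),\qquad f_i:=\sum_{a\in G_x}\p(t_i a h\inv)\,a\in\Bbbk G_x.
\]
It therefore suffices to produce a single index $i$ with $f_i\neq 0$ in $\Bbbk G_x$: faithfulness of $V_x$ as a $\Bbbk G_x$-module will then supply $v$ with $f_i(v)\neq 0$, and the $t_i$-summand of the direct sum decomposition above will witness that $\p\cdot(h\otimes v)\neq 0$.

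For this, I take $t_0$ to be the transversal representative of the orbit containing $gh$ and write $gh=t_0 a_0$ with $a_0=t_0\inv gh\in G_x$ (well-defined because the right $G_x$-action preserves the range, so $\ran(t_0)=\ran(gh)$). Then the coefficient of $a_0$ in $f_0$ equals exactly $\p(t_0 a_0 h\inv)=\p(g)\neq 0$, and since distinct elements of $G_x$ are $\Bbbk$-linearly independent in $\Bbbk G_x$ no other summand in the definition of $f_0$ can contribute to this coefficient; hence $f_0\neq 0$. The only delicate point is the freeness of $\Bbbk L_x$ as a right $\Bbbk G_x$-module (already on record in the paper) together with the bookkeeping that converts $\p\cdot h$ into the advertised diagonal form; the rest is forced, and in particular no Hausdorff or minimality hypothesis on $\mathscr G$ is needed.
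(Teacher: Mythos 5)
Your proof is correct and follows essentially the same route as the paper: both fix a transversal of arrows out of $x$ (your $\{t_i\}$ is the paper's $\{h_y\}_{y\in\mathcal O_x}$), use the free right $\Bbbk G_x$-module structure on $\Bbbk L_x$ to get the $\Bbbk$-module decomposition $\Ind_x(V_x)=\bigoplus t_i\otimes V_x$, isolate the component at $\ran(g)$, and observe that the resulting element of $\Bbbk G_x$ (your $f_0$, the paper's $a$) has coefficient $\p(g)\neq 0$ at $t_0\inv gh$, so faithfulness of $V_x$ finishes the argument. The only cosmetic difference is order of operations: the paper defines $a$ and chooses $v$ before computing $\p\cdot(h\otimes v)$, while you compute first and then select the non-vanishing component.
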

\begin{proof}
For each $y$ in the orbit $\mathcal O_x$ of $x$, choose an arrow $h_y\colon x\to y$.
Set
\[a=\sum_{\dom (z)=\dom(g),\ran(z)=\ran(g)}\p(z)(h_{\ran(g)}\inv zh_{\dom(g)})\in \Bbbk G_x.\] Note that $g$ is the unique element $z$ with $\dom(z)=\dom(g)$, $\ran(z)=\ran(g)$ and $h_{\ran(g)}\inv zh_{\dom(g)}=h_{\ran(g)}\inv gh_{\dom(g)}$ and so the coefficient of $h_{\ran(g)}\inv gh_{\dom(g)}$ in $a$ is $\p(g)\neq 0$, whence $a\neq 0$.  Because $V_x$ is a faithful $\Bbbk G_x$-module, we can choose $v\in V_x$ with $av\neq 0$.

Recall that \[\Ind_x(V_x)=\Bbbk L_x\otimes_{\Bbbk G_x} V_x=\bigoplus_{y\in \mathcal O_x}h_y\otimes V_x\] where the direct sum decomposition is as a $\Bbbk$-module~\cite{mygroupoidalgebra}.    Now we compute
\begin{align*}
\p\cdot (h_{\dom(g)}\otimes v) &= \sum_{\dom(z)=\dom(g)} \p(z)zh_{\dom(g)}\otimes v \\ &= \sum_{\dom(z)= \dom(g)}\p(z)h_{\ran(z)}(h_{\ran(z)}\inv zh_{\dom(g)})\otimes v \\ &=  \sum_{\dom(z)=\dom(g)}h_{\ran(z)}\otimes \p(z)(h_{\ran(z)}\inv zh_{\dom(z)})v \\ &= \sum_{y\in \mathcal O_x}h_y\otimes \sum_{\dom(z)=\dom(g),\ran(z)=y}\p(z)(h_y\inv zh_{\dom(g)})v \\
&= h_{\ran(g)}\otimes av+ {}\\ & \qquad\quad \sum_{y\in \mathcal O_x\setminus \{\ran(g)\}}h_y\otimes \sum_{\dom(z)=\dom(g),\ran(z)=y}\p(z)(h_y\inv zh_{\dom(g)})v\\ & \neq 0
\end{align*}
by choice of $v$.
\end{proof}

We now prove that if the algebras of sufficiently many isotropy groups are semiprimitive over $\Bbbk$, then so is the groupoid algebra.  This is a generalization of a result of Domanov~\cite{Domanov} for inverse semigroups.

\begin{Thm}\label{t:semiprimitivity}
Let $\mathscr G$ be an ample groupoid and $\Bbbk$ a commutative ring with unit. Suppose that the set $X$ of $x\in \mathscr G\skel 0$ such that $\Bbbk G_x$ is semiprimitive is $\Bbbk$-dense.  Then $\Bbbk \mathscr G$ is semiprimitive.
\end{Thm}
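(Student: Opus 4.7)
The plan is to exhibit a faithful semisimple $\Bbbk\mathscr G$-module as a direct sum of modules induced from the ``good'' isotropy groups, and then use the previous lemma to detect nonzero elements via the $\Bbbk$-density hypothesis.

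First I would, for each $x\in X$, fix a faithful semisimple $\Bbbk G_x$-module $V_x$ (which exists since $\Bbbk G_x$ is semiprimitive) and form
\[
M=\bigoplus_{x\in X}\Ind_x(V_x).
\]
By the properties of $\Ind_x$ recorded before Proposition~\ref{minimal} (exactness and preservation of semisimplicity), each $\Ind_x(V_x)$ is a semisimple $\Bbbk\mathscr G$-module, so $M$ is semisimple.

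Second, I would show $M$ is faithful. Suppose $0\neq \p\in \Bbbk\mathscr G$ annihilates $M$. Since $X$ is $\Bbbk$-dense, by definition there exists $g\in \mathscr G\skel 1$ with $\dom(g)\in X$ and $\p(g)\neq 0$. Put $x=\dom(g)\in X$; then trivially $\dom(g)\in \mathcal O_x$, so Lemma~\ref{l:induceup} applied with $V_x$ yields $\p\cdot \Ind_x(V_x)\neq 0$. Because $x\in X$, the summand $\Ind_x(V_x)$ sits inside $M$, contradicting the assumption that $\p$ annihilates $M$. Hence the annihilator of $M$ is zero and $\Bbbk\mathscr G$ is semiprimitive.

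There is essentially no hard step: the technical content is absorbed into Lemma~\ref{l:induceup}, which exactly converts ``$\p$ does not vanish at some arrow into $X$'' into ``$\p$ acts nontrivially on $\Ind_x(V_x)$''. The only thing to be careful about is matching the basepoint $x$ of the induced module to the unit $\dom(g)$ delivered by $\Bbbk$-density; this is what makes it essential that the density condition is phrased with $\dom(g)\in X$ rather than merely $\dom(g)$ lying in the closure of $X$, and it is what prevents the argument from working only from topological density of $X$ in the non-Hausdorff case.
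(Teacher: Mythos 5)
Your argument is correct and is essentially identical to the paper's proof: the same module $\bigoplus_{x\in X}\Ind_x(V_x)$, semisimplicity via the preservation property of $\Ind_x$, and faithfulness via Lemma~\ref{l:induceup} applied at $x=\dom(g)$ supplied by $\Bbbk$-density. Nothing is missing.
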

\begin{proof}
Choose a faithful semisimple module $V_x$ for $\Bbbk G_x$ for each $x\in X$ and put $V=\bigoplus_{x\in X} \Ind_x(V_x)$.  Then $V$ is semisimple by~\cite[Proposition~7.19]{mygroupoidalgebra}.  We need to show that it is faithful.  Let $0\neq \p\in \Bbbk \mathscr G$ and let $g\in \mathscr G\skel 1$ with $\p(g)\neq 0$ and $x=\dom(g)\in X$. Then $\p\cdot \Ind_x(V_x)\neq 0$ by Lemma~\ref{l:induceup} and hence $\p\cdot V\neq 0$. This proves that $V$ is faithful and hence $\Bbbk\mathscr G$ is semiprimitive.
\end{proof}

The groupoid used in~\cite{GroupoidMorita} to realize Leavitt path algebras has the property that each isotropy group is either trivial or infinite cyclic.  Hence one has that the isotropy groups have semiprimitive algebras over any base field.  We thus recover the following result of~\cite{Leavittsemiprimitivity}.

\begin{Cor}
Leavitt path algebras are semiprimitive over any base field.
\end{Cor}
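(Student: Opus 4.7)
The plan is to combine the groupoid model of Leavitt path algebras from \cite{GroupoidMorita} with Theorem~\ref{t:semiprimitivity}, relying on the well-known fact that the group algebra of $\mathbb{Z}$ over any field is semiprimitive. First I would invoke \cite{GroupoidMorita} to realize the Leavitt path algebra $L_\Bbbk(E)$ of a graph $E$ as Morita equivalent to $\Bbbk\mathscr G_E$ for a certain Hausdorff ample groupoid $\mathscr G_E$ whose isotropy groups are each either trivial or infinite cyclic. This is the key input that the paper has already highlighted in the paragraph preceding the corollary.

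Next I would verify semiprimitivity of the relevant isotropy group algebras: for the trivial group the group algebra is just $\Bbbk$, which is semiprimitive since $\Bbbk$ is a field; for $\mathbb Z$ the group algebra is $\Bbbk[x,x^{-1}]$, a commutative principal ideal domain with infinitely many maximal ideals and hence Jacobson radical zero. Thus for every $x\in \mathscr G_E\skel 0$, the group algebra $\Bbbk G_x$ is semiprimitive, so the set $X$ of Theorem~\ref{t:semiprimitivity} is all of $\mathscr G_E\skel 0$, which is trivially $\Bbbk$-dense.

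Applying Theorem~\ref{t:semiprimitivity} then yields that $\Bbbk\mathscr G_E$ is semiprimitive. Finally, since the Jacobson radical is a Morita invariant (and Morita equivalence in the non-unital context used in \cite{GroupoidMorita} still preserves semiprimitivity for algebras with local units, as is the case here by the remarks at the end of the preliminaries), it follows that $L_\Bbbk(E)$ is semiprimitive as well.

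There is no serious obstacle; the proof is essentially an application of Theorem~\ref{t:semiprimitivity} once the groupoid model is in hand. The only point requiring mild care is the Morita transfer of semiprimitivity, which is standard but must be invoked in the setting of algebras with local units rather than unital algebras.
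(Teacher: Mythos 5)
Your proposal is correct and follows essentially the same route as the paper: realize the Leavitt path algebra via the groupoid of \cite{GroupoidMorita}, note that every isotropy group is trivial or infinite cyclic and hence has semiprimitive group algebra over any field, and apply Theorem~\ref{t:semiprimitivity} with $X=\mathscr G\skel 0$. The only (harmless) detour is your appeal to Morita invariance of semiprimitivity: the realization in \cite{GroupoidMorita} is actually an isomorphism $L_\Bbbk(E)\cong \Bbbk\mathscr G_E$, so no Morita transfer is needed.
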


The following extends a celebrated result of Amitsur~\cite{Amitsur} to ample groupoid algebras.

\begin{Cor}
Let $\mathscr G$ be an ample groupoid and let $\Bbbk$ be a field of characteristic $0$ that is not algebraic over $\mathbb Q$.   Then $\Bbbk\mathscr G$ is semiprimitive.
\end{Cor}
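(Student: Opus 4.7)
The plan is to apply Theorem \ref{t:semiprimitivity} directly, with the input being Amitsur's original theorem on group algebras. Specifically, Amitsur's theorem asserts that for any group $G$ whatsoever, if $\Bbbk$ is a field of characteristic $0$ which is not algebraic over $\mathbb Q$, then the group algebra $\Bbbk G$ is semiprimitive. Applying this pointwise to each isotropy group, we conclude that $\Bbbk G_x$ is semiprimitive for every $x \in \mathscr G\skel 0$.

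Consequently, the set $X$ appearing in Theorem \ref{t:semiprimitivity}, namely the set of $x \in \mathscr G\skel 0$ for which $\Bbbk G_x$ is semiprimitive, coincides with all of $\mathscr G\skel 0$. It remains only to verify that $X = \mathscr G\skel 0$ is $\Bbbk$-dense, but this is immediate from the definition: given any nonzero $\p \in \Bbbk\mathscr G$, pick any $g \in \mathscr G\skel 1$ with $\p(g) \neq 0$; then $\dom(g) \in \mathscr G\skel 0 = X$ trivially, verifying both clauses of $\Bbbk$-density. An application of Theorem \ref{t:semiprimitivity} then yields semiprimitivity of $\Bbbk\mathscr G$.

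There is essentially no obstacle here beyond quoting the two inputs: all the technical work has already been done in Theorem \ref{t:semiprimitivity} (whose proof relies on Lemma \ref{l:induceup} and the induction machinery), and in Amitsur's classical theorem. The corollary is a one-line deduction once those are in hand, and in particular does \emph{not} require any hypothesis on the groupoid such as being Hausdorff or effective — it works for arbitrary ample groupoids, since Theorem \ref{t:semiprimitivity} does.
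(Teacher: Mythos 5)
Your proof is correct and matches the paper's argument exactly: Amitsur's theorem gives semiprimitivity of $\Bbbk G_x$ for every $x$, the whole unit space $\mathscr G\skel 0$ is trivially $\Bbbk$-dense, and Theorem~\ref{t:semiprimitivity} finishes. No further comment is needed.
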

\begin{proof}
By a well-known result of Amitsur~\cite{Amitsur}, $\Bbbk G$ is semiprimitive for any group $G$. Since $\mathscr G\skel 0$ is obviously $\Bbbk$-dense, we conclude that $\Bbbk \mathscr G$ is semiprimitive by Theorem~\ref{t:semiprimitivity}.
\end{proof}

In Example~\ref{e:needkdense} below, we shall show that $\Bbbk$-density cannot be relaxed to just density in Theorem~\ref{t:semiprimitivity}.

Our next goal is to generalize a result of Munn~\cite{MunnSemiprim} from inverse semigroups to groupoids.  In fact, his result is trivial in the groupoid context.

\begin{Prop}\label{p:isolated}
Let $\mathscr G$ be an ample groupoid and $\Bbbk$ a commutative ring with unit.  Let $U\subseteq \mathscr G\skel 0$ be compact open and let $\mathscr G|_U$ be the full open subgroupoid with object set $U$.  If $\Bbbk\mathscr G$ is (semi)primitive, then so $\Bbbk \mathscr G|_U$.  In particular, if $x\in \mathscr G_0$ is an isolated point, then $\Bbbk G_x$ is (semi)primitive whenever $\Bbbk\mathscr G$ is (semi)primitive.
\end{Prop}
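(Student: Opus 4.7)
The plan is to realize $\Bbbk\mathscr G|_U$ as a corner $\chi_U\ast\Bbbk\mathscr G\ast\chi_U$ of $\Bbbk\mathscr G$ and then appeal to the standard ring-theoretic fact that non-zero idempotent corners of (semi)primitive rings inherit (semi)primitivity. We may assume $U\neq\emptyset$, so that $\chi_U\in \Bbbk\mathscr G\skel 0$ is a non-zero idempotent.

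First I would verify the algebra identification $\chi_U\ast\Bbbk\mathscr G\ast\chi_U=\Bbbk\mathscr G|_U$. A short computation using $\chi_U\ast\chi_V=\chi_{UV}$ (and its dual) for $V\in \Bis_c(\mathscr G)$ shows that $\chi_U\ast\chi_V\ast\chi_U=\chi_W$, where $W=\{v\in V\mid \dom(v),\ran(v)\in U\}$ is a compact open local bisection of $\mathscr G|_U$. Since every compact open local bisection of $\mathscr G|_U$ arises in this way and these span $\Bbbk\mathscr G|_U$, the identification follows; note that the corner $\chi_U\ast\Bbbk\mathscr G\ast\chi_U$ is a unital subalgebra with identity $\chi_U$.

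Second I would execute the classical corner argument for (semi)primitivity. Given a faithful simple $R$-module $M$ and a non-zero idempotent $e\in R$, the module $eM$ is non-zero (else $e$ annihilates $M$, forcing $e=0$), is simple over $eRe$ (from $(eRe)(em)=eRem=eM$ whenever $em\neq 0$, using simplicity of $M$), and is faithful over $eRe$ (since $(ere)\cdot eM=(ere)M$, which vanishes only when $ere=0$ by faithfulness of $M$). The semisimple version follows by decomposing $M=\bigoplus_i M_i$ into simples and applying the same reasoning to each non-zero summand $eM_i$. Specializing to $R=\Bbbk\mathscr G$ and $e=\chi_U$ yields the (semi)primitivity of $\Bbbk\mathscr G|_U$.

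For the final statement, one takes $U=\{x\}$, which is compact open by the isolation hypothesis. The full subgroupoid $\mathscr G|_{\{x\}}$ has unit space $\{x\}$ and arrow set equal to the isotropy group $G_x$, so $\Bbbk\mathscr G|_{\{x\}}=\Bbbk G_x$. The only potential concern in the plan is that $\Bbbk\mathscr G$ lacks a global identity; however, it has local units and the corner $\chi_U\ast\Bbbk\mathscr G\ast\chi_U$ is itself unital, so the standard corner arguments transfer without modification.
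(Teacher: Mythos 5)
Your proposal is correct and takes essentially the same route as the paper: identify $\Bbbk\mathscr G|_U$ with the corner $\chi_U\ast\Bbbk\mathscr G\ast\chi_U$ and invoke the fact that a corner of a (semi)primitive ring is (semi)primitive. The only difference is that the paper cites these two facts (from \cite{groupoidbundles} and \cite[Corollary~21.13]{LamBook}, respectively) while you verify them directly.
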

\begin{proof}
It is shown in~\cite{groupoidbundles} that $\Bbbk\mathscr G|_U$ is the corner $\chi_U\cdot \Bbbk\mathscr G\cdot \chi_U$.  As a corner in a (semi)primitive ring is (semi)primitive (cf.~\cite[Corollary~21.13]{LamBook}), the result follows.
\end{proof}

Combining Proposition~\ref{p:isolated} with Theorem~\ref{t:semiprimitivity}, we obtain the following corollary.

\begin{Cor}\label{c:isolatedsemi}
Let $\mathscr G$ be an ample groupoid and $\Bbbk$ a commutative ring with unit.  Suppose that $X\subseteq \mathscr G\skel 0$ is a $\Bbbk$-dense set of isolated points. Then $\Bbbk\mathscr G$ is semiprimitive if and only if $\Bbbk G_x$ is semiprimitive for all $x\in X$.
\end{Cor}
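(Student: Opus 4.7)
The plan is to prove the two implications by directly combining the two results the author has just established: Proposition~\ref{p:isolated} handles the forward direction, and Theorem~\ref{t:semiprimitivity} handles the converse. Neither direction requires any new ingredients, so the work is essentially bookkeeping about isolated points and $\Bbbk$-density.

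For the forward direction, I would fix $x \in X$ and use the hypothesis that $x$ is isolated in $\mathscr G\skel 0$ to conclude that the singleton $U = \{x\}$ is both open and compact, and in particular $U \in \Bis_c(\mathscr G\skel 0)$. Then $\mathscr G|_U$ is precisely the isotropy group $G_x$ viewed as a one-object groupoid, so $\Bbbk\mathscr G|_U = \Bbbk G_x$. Applying Proposition~\ref{p:isolated} (which says that the corner algebra $\chi_U \Bbbk\mathscr G\, \chi_U \cong \Bbbk\mathscr G|_U$ inherits semiprimitivity from $\Bbbk\mathscr G$) yields semiprimitivity of $\Bbbk G_x$.

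For the converse, let $Y = \{x \in \mathscr G\skel 0 \mid \Bbbk G_x \text{ is semiprimitive}\}$. By hypothesis $X \subseteq Y$, and since the remark immediately following the definition of $\Bbbk$-density observes that any superset of a $\Bbbk$-dense set is $\Bbbk$-dense, $Y$ is $\Bbbk$-dense. Theorem~\ref{t:semiprimitivity} then gives semiprimitivity of $\Bbbk\mathscr G$.

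There is no real obstacle here — the only thing to be careful about is the identification of $\Bbbk\mathscr G|_{\{x\}}$ with $\Bbbk G_x$, which is immediate once one notes that $\mathscr G|_{\{x\}}\skel 1 = \dom\inv(x)\cap \ran\inv(x) = G_x$, and the monotonicity of $\Bbbk$-density under taking supersets, which is noted in passing just after the definition.
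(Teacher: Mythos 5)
Your proof is correct and follows exactly the paper's route: the forward direction is the "isolated point" clause of Proposition~\ref{p:isolated} applied to $U=\{x\}$, and the converse is Theorem~\ref{t:semiprimitivity} applied to the $\Bbbk$-dense superset of $X$ consisting of all points with semiprimitive isotropy group algebra. The paper gives no further detail, so nothing is missing.
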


\subsection{Primitivity}
Now we consider primitivity of ample groupoid algebras.
First we show that if $\Bbbk\mathscr G$ is primitive, then $\mathscr G$ admits a $\Bbbk$-dense orbit (and hence a dense orbit).

Let $\mathscr G$ be an \'etale groupoid.  A \emph{$\mathscr G$-sheaf} consists of a space $E$, a local homeomorphism $p\colon E\to \mathscr G\skel 0$ and an action map $E\times_{\mathscr G\skel 0} \mathscr G\skel 1\to E$ (where the fiber product is with respect to $p$ and $\ran$), denoted $(x,g)\mapsto xg$ satisfying the following axioms:
\begin{itemize}
\item $ep(e)=e$ for all $e\in E$;
\item $p(eg)=\dom(g)$ whenever $p(e)=\ran(g)$;
\item $(eg)h=e(gh)$ whenever $p(e)=\ran(g)$ and $\dom(g)=\ran(h)$.
\end{itemize}

 If $\Bbbk$ is a commutative ring with unit, then a \emph{$\mathscr G$-sheaf of $\Bbbk$-modules} is a $\mathscr G$-sheaf $(E,p)$ together a $\Bbbk$-module structure on each stalk $E_x=p\inv(x)$ such that:
\begin{itemize}
\item the zero section, denoted $0$, sending $x\in \mathscr G\skel 0$ to the zero of $E_x$ is continuous;
\item addition $E\times_{\mathscr G\skel 0} E\to E$ is continuous;
\item scalar multiplication $\Bbbk\times E\to E$ is continuous;
\item for each $g\in \mathscr G\skel 1$, the map $R_g\colon E_{\ran(g)}\to E_{\dom(x)}$ given by $R_g(e) = eg$ is $\Bbbk$-linear;
\end{itemize}
where $\Bbbk$ has the discrete topology in the third item.
Note that the first three conditions are equivalent to $(E,p)$ being a sheaf of $\Bbbk$-modules over $\mathscr G\skel 0$.

If $(E,p)$ is a $\mathscr G$-sheaf of $\Bbbk$-modules, then $\Gamma_c(E,p)$ denotes the $\Bbbk$-module of global sections of $p$ with compact support.  There is a right $\Bbbk\mathscr G$-module structure on $\Gamma_c(E,p)$ given by  \[(s\p)(x) = \sum_{\dom(g)=x} \p(g)s(\ran(g))g=\sum_{\dom(g)=x} \p(g)R_g(s(\ran(g))). \]  It is shown in~\cite{groupoidbundles} that every unitary right $\Bbbk\mathscr G$-module is isomorphic to $\Gamma_c(E,p)$ for some $\mathscr G$-sheaf $(E,p)$ of $\Bbbk$-modules.  We recall that $M$ is unitary if $M\cdot \Bbbk\mathscr G=M$.

\begin{Prop}\label{p:denseorbit}
Let $\mathscr G$ be an ample groupoid and $\Bbbk$ a field.  Suppose that $\Bbbk\mathscr G$ is primitive.  Then $\mathscr G$ has a $\Bbbk$-dense orbit.
\end{Prop}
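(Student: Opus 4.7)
The plan is to exploit the sheaf representation of unitary right $\Bbbk\mathscr G$-modules recalled just before the statement. Since $\Bbbk\mathscr G$ has local units, any faithful simple module $M$ witnessing primitivity is automatically unitary: $M\cdot \Bbbk\mathscr G$ is a non-zero submodule of the simple module $M$ (non-zero because $\Bbbk\mathscr G\neq 0$ annihilates no faithful module), hence equals $M$. So we may identify $M=\Gamma_c(E,p)$ for some $\mathscr G$-sheaf $(E,p)$ of $\Bbbk$-modules.

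For each orbit $\mathcal O\subseteq \mathscr G\skel 0$, set
\[
M_{\mathcal O}=\{s\in \Gamma_c(E,p)\mid s(x)=0\ \text{for all}\ x\in \mathcal O\}.
\]
The crucial observation is that $M_{\mathcal O}$ is a $\Bbbk\mathscr G$-submodule of $M$. Indeed, if $s\in M_{\mathcal O}$, $\p\in \Bbbk\mathscr G$ and $y\in \mathcal O$, then every arrow $g$ with $\dom(g)=y$ has $\ran(g)\in \mathcal O$, whence $s(\ran(g))=0$; the action formula
\[
(s\p)(y)=\sum_{\dom(g)=y}\p(g)R_g(s(\ran(g)))
\]
forces $(s\p)(y)=0$, i.e.\ $s\p\in M_{\mathcal O}$. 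By simplicity, each $M_{\mathcal O}$ is either $0$ or $M$. Since $M\neq 0$, some section $s\in M$ is non-zero at some $x\in \mathscr G\skel 0$; then $s\notin M_{\mathcal O_x}$, forcing $M_{\mathcal O_x}=0$. Let $\mathcal O=\mathcal O_x$ be this orbit.

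It remains to verify that $\mathcal O$ is $\Bbbk$-dense. Let $\p\in \Bbbk\mathscr G\setminus \{0\}$. By faithfulness of $M$, choose $t\in M$ with $t\p\neq 0$. Since $M_{\mathcal O}=0$, the section $t\p$ does not vanish identically on $\mathcal O$, so there exists $y\in \mathcal O$ with $(t\p)(y)\neq 0$. The action formula displayed above then forces some $g\in \mathscr G\skel 1$ with $\dom(g)=y\in \mathcal O$ and $\p(g)\neq 0$, which is exactly the condition defining $\Bbbk$-density.

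The only non-trivial ingredient is the sheaf representation of unitary modules from~\cite{groupoidbundles}; everything else reduces to the observation that orbits are closed under the operation $g\mapsto \ran(g)$ applied to arrows with domain in the orbit, so that ``vanishing on an orbit'' is preserved by the right action. Consequently I expect the main obstacle to be purely bookkeeping: unpacking the sheaf formalism and verifying unitarity of $M$; once these are in place, simplicity picks out the desired orbit and faithfulness delivers the density.
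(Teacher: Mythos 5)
Your proof is correct and follows essentially the same route as the paper's: realize the faithful simple module as $\Gamma_c(E,p)$ via the sheaf representation of \cite{groupoidbundles}, locate an orbit on which the sections do not all vanish, and use faithfulness together with the action formula to extract $\Bbbk$-density. The only (harmless) difference is organizational: where you apply simplicity to the submodule $M_{\mathcal O}$ of sections vanishing on an orbit and conclude directly, the paper applies it to $tI$ for the right ideal $I$ of functions vanishing on $\ran\inv(\mathcal O_x)$ and derives a contradiction with $E_x\neq 0$.
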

\begin{proof}
Let $M$ be a faithful simple right $\Bbbk\mathscr G$-module.  Then $M$ is unitary and hence $M\cong \Gamma_c(E,p)$ for some $\mathscr G$-sheaf of $\Bbbk$-vector spaces $(E,p)$~\cite{groupoidbundles}.  Suppose that the stalk $E_x$ is non-zero.  We claim that the orbit $\mathcal O_x$ is $\Bbbk$-dense. Indeed, let $X=\ran\inv(\mathcal O_x)$ and let  $I$ be the set of all $\p\in \Bbbk\mathscr G$ that vanish on $X$.  Then $I$ is a right ideal. If $I=0$, then $\mathcal O_x$ is $\Bbbk$-dense.  So suppose that $I\neq 0$.  Because $M$ is faithful, we can find $t\in \Gamma_c(E,p)$ with $tI\neq 0$.  Then $tI=\Gamma_c(E,p)$ by simplicity.  So if $s\in \Gamma_c(E,p)$, then $s=t\p$ for some $\p\in I$.  But then, \[s(x)=(t\p)(x)=\sum_{\dom(g)=x}\p(g)t(\ran(g))g=0\] because $\p$ vanishes on $X$ and $d(g)=x$ implies $g\in X$.  This contradicts $E_x\neq 0$. We conclude that $\mathcal O_x$ is $\Bbbk$-dense.
\end{proof}

We now give several situations under which the converse holds.

\begin{Thm}
Let $\mathscr G$ be an effective Hausdorff ample groupoid and $\Bbbk$ a field.  Then $\Bbbk\mathscr G$ is primitive if and only if $\mathscr G$ has  a dense orbit.
\end{Thm}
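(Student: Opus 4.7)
The plan is to handle the two directions separately: the forward implication is a short citation chase, while the reverse requires an explicit construction of a faithful simple module. For the forward direction, primitivity of $\Bbbk\mathscr G$ produces a $\Bbbk$-dense orbit via Proposition~\ref{p:denseorbit}, which upgrades to an ordinary dense orbit by Proposition~\ref{kdense}(1).

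For the converse, I will let $\mathcal O_x$ be a dense orbit and take $M=\Ind_x(\Bbbk)\cong\Bbbk\mathcal O_x$, where $\Bbbk$ carries the trivial $\Bbbk G_x$-module structure, and argue that $M$ is both simple and faithful. Simplicity will be verified directly: given a non-zero submodule $N$ and an element $v=\sum_{i=1}^n k_i y_i\in N$ with distinct $y_i\in\mathcal O_x$ and $k_i\neq 0$, the Hausdorff basis of compact open sets on $\mathscr G\skel 0$ lets me pick $U\in\Bis_c(\mathscr G\skel 0)$ with $y_1\in U$ but $y_j\notin U$ for $j>1$, so that $\chi_U\cdot v=k_1 y_1\in N$; then for any $u\in\mathcal O_x$, an arrow $g\colon y_1\to u$ lies in some $V\in\Bis_c(\mathscr G)$, giving $\chi_V\cdot y_1=u\in N$. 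Hence $\mathcal O_x\subseteq N$, so $N=M$.

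For faithfulness, I will suppose toward a contradiction that $I=\mathrm{Ann}(M)\neq 0$. Here is where effectiveness and Hausdorffness earn their keep: Proposition~\ref{effectivecase} forces $I$ to contain some $k\cdot\chi_U$ with $k\neq 0$ and $\emptyset\neq U\subseteq\mathscr G\skel 0$ compact open, and density of $\mathcal O_x$ then furnishes a point $u\in U\cap\mathcal O_x$, on which $(k\chi_U)\cdot u=ku\neq 0$, contradicting $k\chi_U\in I$. The main conceptual point, and the step I expect to require some care, is the recognition that one should take the \emph{trivial} $\Bbbk G_x$-module here rather than chase a faithful one as in Lemma~\ref{l:induceup}: this choice suffices precisely because effectiveness plus density means any would-be annihilator must already contain a non-zero element supported on $\mathscr G\skel 0$, which density then kills, so one does not need $\Bbbk G_x$ itself to admit a faithful simple module.
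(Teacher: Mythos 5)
Your proof is correct and follows essentially the same route as the paper: necessity via Proposition~\ref{p:denseorbit} together with Proposition~\ref{kdense}, and sufficiency by showing $\Ind_x(\Bbbk)\cong\Bbbk\mathcal O_x$ (trivial $G_x$-action) is faithful using Proposition~\ref{effectivecase} plus density. The only difference is that you verify simplicity of $\Bbbk\mathcal O_x$ by a direct (and correct) separation argument, where the paper simply cites Proposition~7.19 of~\cite{mygroupoidalgebra}.
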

\begin{proof}
It is necessary for $\mathscr G$ to have a dense orbit by Proposition~\ref{p:denseorbit} and Proposition~\ref{kdense}.  For sufficiency, assume that $\mathscr G\skel 0$ has a dense orbit $\mathcal O_x$.  We claim that $\Bbbk \mathcal O_x=\Ind_x(\Bbbk)$ is a faithful simple module (where $\Bbbk$ has the trivial $G_x$-action).  We know that it is simple by~\cite[Proposition~7.19]{mygroupoidalgebra}.  If the annihilator is non-zero, then by Proposition~\ref{effectivecase} it contains an element of the form $\chi_U$ with $U\in \Bis_c(\mathscr G\skel 0)$.  By density, there exists $y\in \mathcal O_x\cap U$.  Then $\chi_Uy=y\neq 0$.  This contradiction shows that $\Bbbk\mathscr G$ is primitive.
\end{proof}

Next we show that if there is a $\Bbbk$-dense orbit whose isotropy group has a primitive algebra, then the groupoid has a primitive algebra.

\begin{Thm}\label{t:primitivity}
Let $\mathscr G$ be an ample groupoid and $\Bbbk$ a field.  Suppose that $x\in \mathscr G\skel 0$ is such that $\mathcal O_x$ is $\Bbbk$-dense. If $\Bbbk G_x$ is primitive, then $\Bbbk\mathscr G$ is primitive.  The converse holds if $x$ is an isolated point.
\end{Thm}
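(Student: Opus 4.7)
The plan is to handle the two directions separately, with the forward implication built on Lemma~\ref{l:induceup} and the converse on Proposition~\ref{p:isolated}.

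For the forward direction, suppose $\Bbbk G_x$ is primitive and fix a faithful simple $\Bbbk G_x$-module $V_x$. I would then form the induced module $\Ind_x(V_x) = \Bbbk L_x\otimes_{\Bbbk G_x} V_x$. By the exactness and simple-preservation properties of $\Ind_x$ recorded before Proposition~\ref{minimal} (i.e.,~\cite[Proposition~7.19]{mygroupoidalgebra}), this is a simple $\Bbbk\mathscr G$-module, so it remains only to show it is faithful. Given $0\neq\p\in\Bbbk\mathscr G$, the $\Bbbk$-density of $\mathcal O_x$ supplies an arrow $g\in\mathscr G\skel 1$ with $\dom(g)\in\mathcal O_x$ and $\p(g)\neq 0$; Lemma~\ref{l:induceup} then gives $\p\cdot \Ind_x(V_x)\neq 0$. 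Hence $\Ind_x(V_x)$ is a faithful simple $\Bbbk\mathscr G$-module, proving primitivity.

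For the converse, assume that $x$ is isolated and that $\Bbbk\mathscr G$ is primitive. Since $\mathscr G\skel 0$ has a basis of compact open sets, the singleton $U=\{x\}$ is itself compact open. The restricted groupoid $\mathscr G|_U$ has unit space $\{x\}$, so its arrows are exactly the elements of the isotropy group $G_x$, and therefore $\Bbbk\mathscr G|_U=\Bbbk G_x$. Proposition~\ref{p:isolated} then yields that $\Bbbk G_x$ is primitive.

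I do not expect any real obstacle here: once Lemma~\ref{l:induceup} is in hand the forward direction is a one-line induction argument, and the converse is an immediate application of the corner description $\Bbbk\mathscr G|_U=\chi_U\ast\Bbbk\mathscr G\ast\chi_U$ together with the standard fact that primitivity passes to corners. The only place where care is needed is in noting that the singleton $\{x\}$ really is compact open when $x$ is isolated, so that Proposition~\ref{p:isolated} applies with $U=\{x\}$.
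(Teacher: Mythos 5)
Your proposal is correct and follows essentially the same route as the paper: the forward direction induces a faithful simple $\Bbbk G_x$-module to $\Ind_x(V_x)$, using~\cite[Proposition~7.19]{mygroupoidalgebra} for simplicity and Lemma~\ref{l:induceup} together with $\Bbbk$-density for faithfulness, while the converse is exactly the isolated-point case of Proposition~\ref{p:isolated}. No gaps.
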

\begin{proof}
The final statement follows from Proposition~\ref{p:isolated}.  Suppose that $\Bbbk G_x$ is primitive and that $V_x$ is a faithful simple $\Bbbk G_x$-module.  Then $\Ind_x(V_x)$ is simple by~\cite[Proposition~7.19]{mygroupoidalgebra} and faithful by Lemma~\ref{l:induceup} and the definition of $\Bbbk$-density.
\end{proof}

Inverse semigroups with zero are constructed in~\cite{Munntwoexamples} with the property that their associated ample groupoids have simple (hence primitive) algebras over $\mathbb F_p$ but with the property that none of their isotropy groups have a semiprimitive algebra over $\mathbb F_p$ (in fact, they contain non-zero nilpotent ideals).

\section{Applications to inverse semigroups}
In this section, we apply the results of the previous sections to obtain new results about inverse semigroup algebras, as well as simpler and more conceptual proofs of old results.

\subsection{The universal groupoid}
First we recall the construction of the universal groupoid $\mathscr U(S)$ of an inverse semigroup and the contracted universal groupoid $\mathscr U_0(S)$ for an inverse semigroup with zero. See~\cite{Exel,Paterson,mygroupoidalgebra} for details.

A \emph{character} of a semilattice $E$ is a non-zero homomorphism $\theta\colon E\to \{0,1\}$ where $\{0,1\}$ is a semilattice under multiplication.  The \emph{spectrum} of $E$ is the space $\wh E$ of characters of $E$, topologized as a subspace of $\{0,1\}^E$.  Note that $\wh E$ is Hausdorff with a basis of compact open sets.  Indeed, if we put $D(e)=\{\theta\in \wh E\mid \theta(e)=1\}$ for $e\in E(S)$, then the sets of the form $D(e)\cap D(e_1)^c\cap\cdots D(e_n)^c$ form a basis of compact open sets for the topology, where $X^c$ denotes the complement of $X$.   If $e\in E$, then the \emph{principal character} $\chi_e\colon E\to \{0,1\}$ is defined by
\[\chi_e(f)=\begin{cases} 1, & \text{if}\ f\geq e\\ 0, & \text{else.}\end{cases}\]  The principal characters are dense in $\wh E$.  If $E$ has a zero element, then a character $\theta$ is called \emph{proper} if $\theta(0)=0$, or equivalently $\theta\neq \chi_0$.  The set of proper characters will be denoted $\wh E_0$.  Notice that $D(0)=\{\theta_0\}$ and so $\theta_0$ is always an isolated point.

Let $S$ be an inverse semigroup. Then $S$ acts on $\wh{E(S)}$.  The domain of the action of $s$ is $D(s^*s)$.  If $\theta\in D(s^*s)$, then $(s\theta)(e) = \theta(s^*es)$.  If $S$ has a zero, then $\wh{E(S)}_0$ is invariant under $S$.  The \emph{universal groupoid} of $S$ is $\mathscr U(S)=S\ltimes \wh{E(S)}$. Note that $[s,\chi_{s^*s}]\in (t,D(t^*t))$ if and only if $s\leq t$ and that the isotropy group $G_{\chi_e}$ of a principal character is isomorphic to the maximal subgroup $G_e$ (cf.~\cite{mygroupoidalgebra}).
It is known that $\mathscr U(S)$ is Hausdorff if and only if $S$ is Hausdorff~\cite[Theorem~5.17]{mygroupoidalgebra}.

If $S$ has a zero, we put $\mathscr U_0(S)=S\ltimes \wh{E(S)}_0$ and call it the \emph{contracted universal groupoid} of $S$.

The following theorem is fundamental to the subject.

\begin{Thm}\label{t:isothm}
Let $S$ be an inverse semigroup and $\Bbbk$ a commutative ring with unit.  Then $\Bbbk S\cong \Bbbk\mathscr U(S)$. The isomorphism sends $s\in S$ to $\chi_{(s,D(s^*s))}$.  If $S$ has a zero, then $\Bbbk_0S\cong \Bbbk\mathscr U_0(S)$.
\end{Thm}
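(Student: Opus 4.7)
The plan is to construct $\Phi\colon\Bbbk S\to\Bbbk\mathscr U(S)$ on basis elements by $\Phi(s)=\chi_{(s,D(s^*s))}$ and verify it is an algebra isomorphism in three steps: multiplicativity, surjectivity, and injectivity. Multiplicativity reduces, via the convolution identity $\chi_U\ast\chi_V=\chi_{UV}$ for $U,V\in\Bis_c(\mathscr U(S))$, to the bisection computation $(s,D(s^*s))\cdot(t,D(t^*t))=(st,D((st)^*(st)))$: an arrow in the product has the form $[s,t\theta]\cdot[t,\theta]=[st,\theta]$ with $\theta\in D(t^*t)$ and $t\theta\in D(s^*s)$, and the action rule $(t\theta)(e)=\theta(t^*et)$ turns the latter condition into $\theta\in D(t^*s^*st)=D((st)^*(st))$.

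For surjectivity, I would use that $\Bbbk\mathscr U(S)$ is spanned by the $\chi_{(s,V)}$ with $V$ a compact open subset of $D(s^*s)$, and that the basic compact opens of $\wh{E(S)}$ have the form $D(e)\cap D(e_1)^c\cap\cdots\cap D(e_n)^c$ (after replacing $e_i$ by $ee_i$ one may assume $e_i\leq e\leq s^*s$). Inclusion-exclusion then writes $\chi_{(s,V)}$ as a $\mathbb Z$-linear combination of $\chi_{(s,D(e'))}$ with $e'\leq s^*s$, and for such $e'$ every germ $[s,\theta]$ with $\theta\in D(e')$ coincides with $[se',\theta]$, so that $\chi_{(s,D(e'))}=\chi_{(se',D((se')^*(se')))}=\Phi(se')$ lies in the image of $\Phi$.

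For injectivity, suppose $\sum_{i=1}^n k_i\Phi(s_i)=0$ with the $s_i$ pairwise distinct. Evaluating at the arrow $[s_j,\chi_{s_j^*s_j}]$ based at the principal character $\chi_{s_j^*s_j}$, I would argue that this arrow lies in $(s_i,D(s_i^*s_i))$ exactly when there exists $u\leq s_i,s_j$ with $u^*u\geq s_j^*s_j$; but $u\leq s_j$ forces $u^*u\leq s_j^*s_j$, so equality holds and $u=s_j$, reducing the condition to $s_j\leq s_i$. Hence $\sum_{i\colon s_j\leq s_i}k_i=0$ for every $j$, and choosing $s_j$ maximal in $\{s_1,\ldots,s_n\}$ forces $k_j=0$; finite induction then yields $k_i=0$ for all $i$.

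For the contracted case, any character with $\theta(0)=1$ must be constant $1$ (from $\theta(e)\theta(0)=\theta(0)=1$), so $D(0)=\{\chi_0\}$ is a clopen singleton in $\wh{E(S)}$ and $\chi_0$ is $S$-fixed. Consequently $\wh{E(S)}_0$ is clopen and invariant, $[s,\chi_0]=[z,\chi_0]$ for every $s$, and one obtains the clopen decomposition of groupoids $\mathscr U(S)\skel 1=\mathscr U_0(S)\skel 1\sqcup\{[z,\chi_0]\}$, hence a direct sum of algebras $\Bbbk\mathscr U(S)\cong\Bbbk\mathscr U_0(S)\oplus\Bbbk\cdot\chi_{\{[z,\chi_0]\}}$. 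Under $\Phi$ the element $z$ maps to $\chi_{(z,D(0))}=\chi_{\{[z,\chi_0]\}}$, so $\Phi(\Bbbk z)$ is precisely the second summand and passing to the quotient gives $\Bbbk_0S\cong\Bbbk\mathscr U_0(S)$. The main obstacle will be the injectivity step, where the evaluation-at-principal-characters computation and the finite-poset induction require the most care; the remaining steps are essentially bookkeeping.
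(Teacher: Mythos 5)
Your argument is correct. Note, though, that the paper's own proof of the first isomorphism is just a citation to \cite[Theorem~6.3]{mygroupoidalgebra}; what you have written out (multiplicativity via the bisection identity $(s,D(s^*s))(t,D(t^*t))=(st,D((st)^*(st)))$, surjectivity via inclusion--exclusion over basic compact opens, injectivity by evaluating at a principal character $\chi_{s_j^*s_j}$ for $s_j$ maximal) is essentially a reconstruction of that cited proof, and your injectivity step is exactly the computation the paper reuses in Proposition~\ref{p:idempotentsstronglydense}. For the contracted case your clopen decomposition $\mathscr U(S)\skel 1=\mathscr U_0(S)\skel 1\sqcup\{[z,\chi_0]\}$ is the same argument as the paper's restriction-map proof, so overall the approach coincides with the paper's (and its source); the only bookkeeping you gloss over is that a general compact open $V\subseteq D(s^*s)$ is a finite \emph{union} of basic compact opens, so the inclusion--exclusion must run over that union as well as over the complemented terms.
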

\begin{proof}
The first isomorphism is proved in~\cite[Theorem~6.3]{mygroupoidalgebra}. For the second isomorphism, note that $0$ is sent to $\chi_{(0,D(0))}$ and so $\Bbbk_0S\cong \Bbbk\mathscr U(S)/\Bbbk \chi_{(0,D(0))}$.  But $(0,D(0))$ just consists of the unit $\chi_0$ and hence $\Bbbk \chi_{(0,D(0))}$ is the kernel of the restriction map $\Bbbk\mathscr U(S)\to \Bbbk\mathscr U(S)_0$, and the latter is a surjective homomorphism because $\mathscr U_0(S)$ is a clopen full subgroupoid of $\mathscr U(S)$ whose unit space is a union of orbits.
\end{proof}

\subsubsection{Ultrafilters and tight characters}
Let $E$ be a semilattice.  A \emph{filter} is a non-empty subsemigroup $\mathcal F\subseteq E$ with the property that $e\geq f$ with $f\in \mathcal F$ implies $e\in \mathcal F$.  The characters of $E$ are exactly the characteristic functions $\chi_{\mathcal F}$ of filters $\mathcal F$.  Let $E$ be a semilattice with zero. The proper characters of $E$ are in bijection with proper filters.  A maximal proper filter is called an \emph{ultrafilter}.  Denote by $UF(E)$ the subspace of $\wh E_0$ consisting of those $\theta\in \wh E$ with $\theta\inv (1)$ an ultrafilter.  If $S$ is an inverse semigroup with zero, then $UF(E(S))$ is invariant under $S$~\cite{Exel}.  This led Exel to study the closure of $UF(E(S))$ in $\wh{E(S)}_0$.

Let $E$ be a semilattice with zero and $e\in E$.  A finite subset $F\subseteq e^{\downarrow}$ is a \emph{cover} of $e$ if $zf=0$ for all $f\in F$ implies that $ze=0$. Equivalently, $F$ is a cover of $e$ if $0\neq z\leq e$ implies that $zf\neq 0$ for some $f\in F$.  Note that the empty set if a cover of $0$.   Following Exel~\cite{Exel} (but using a reformulation of Lawson~\cite{Lawsontight}), we say that a filter $\mathcal F$ is \emph{tight} if $e\in \mathcal F$ and $F$ a cover of $e$ implies $F\cap \mathcal F\neq \emptyset$. Note that a tight filter must be proper because $\emptyset$ covers $0$.   We say that a  character $\theta$ is \emph{tight} if $\theta\inv(1)$ is a tight filter, or equivalently,
\[\theta(e)=\bigvee_{f\in F}\theta(f)\] whenever $F$ is a cover of $e$.  This is also equivalent to \[\prod_{f\in F}(\theta(e)-\theta(f))=0\] for each cover $F$ of $e$.  The space of tight characters is denoted $\wh E_T$.  Any ultrafilter is tight and Exel proved~\cite{Exel} $\wh E_T$ is the closure of $UF(E)$. In particular, $\wh{E(S)}_T$ is invariant for an inverse semigroup $S$ with zero.  We put $\mathscr U_T(S)=S\ltimes \wh{E(S)}_T$ and call it the \emph{universal tight groupoid} of $S$.

Our next goal is to give a presentation of the algebra of the universal tight groupoid under the assumption that $S$ is Hausdorff. We shall use without comment that the idempotents of a commutative ring form a generalized boolean algebra via $e\vee f=e+f-ef$, $e\wedge f=ef$ and $e\setminus f=e-ef$.  If $\psi\colon S\to A$ is a homomorphism to a $\Bbbk$-algebra $A$, we say that $\psi$ is \emph{tight} if $\psi(e)=\bigvee_{f\in F}\psi(f)$ whenever $F$ is a cover of $E$.

\begin{Prop}\label{p:presentation}
Let $S$ be a Hausdorff inverse semigroup and let $X\subseteq \wh{E(S)}$ be a closed invariant subspace. If $\mathscr G=\mathscr U(S)|_X$ and $\Bbbk$ is a commutative ring with unit, then \[\Bbbk\mathscr G\cong \Bbbk S/\langle \prod_{i=1}^n(e-e_i)\mid e_i\leq e, D(e)\cap D(e_1)^c\cap\cdots \cap D(e_n)^c\cap X=\emptyset\rangle.\]
\end{Prop}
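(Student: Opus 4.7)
My plan is to work entirely through the isomorphism of Theorem~\ref{t:isothm}, identifying $\Bbbk S$ with $\Bbbk \mathscr U(S)$, and then to show that the natural restriction map $\rho\colon \Bbbk \mathscr U(S)\to \Bbbk \mathscr G$ is surjective with kernel equal to the image of the ideal $I$ appearing in the statement. First, since $S$ is Hausdorff, $\mathscr U(S)$ is Hausdorff, so elements of $\Bbbk \mathscr U(S)$ are genuine compactly supported continuous $\Bbbk$-valued functions on $\mathscr U(S)\skel 1$. Because $X\subseteq \wh{E(S)}$ is closed and invariant, $\mathscr G\skel 1 = \dom\inv(X)$ is a closed subspace of $\mathscr U(S)\skel 1$ and $\mathscr G\skel 0 = X$ inherits a basis of compact open sets. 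Restriction to $\mathscr G\skel 1$ therefore gives a well-defined $\Bbbk$-algebra homomorphism $\rho$ (convolution is preserved because $\mathscr G$ is a full subgroupoid). Surjectivity is standard: every $V\in\Bis_c(\mathscr G)$ is a finite disjoint union of restrictions $(s, W\cap X)$ with $(s,W)\in \Bis_c(\mathscr U(S))$, so $\chi_V$ lifts.

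Next I would verify that $I\subseteq \ker\rho$. Under Theorem~\ref{t:isothm}, $e\mapsto \chi_{D(e)}$, and since $e_i\leq e$ we have $\chi_{D(e)}\chi_{D(e_i)}=\chi_{D(e_i)}$, so a direct computation in the Boolean-like ring of compact open subsets of $\wh{E(S)}$ yields
\[
\prod_{i=1}^n\bigl(\chi_{D(e)}-\chi_{D(e_i)}\bigr) \;=\; \chi_{D(e)\cap D(e_1)^c\cap\cdots\cap D(e_n)^c}.
\]
Restricted to $X$ this function vanishes precisely under the hypothesis $D(e)\cap D(e_1)^c\cap\cdots\cap D(e_n)^c\cap X=\emptyset$, so every generator of $I$ is killed by $\rho$ and we obtain an induced surjection $\overline{\rho}\colon \Bbbk S/I\to\Bbbk \mathscr G$.

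The main work is showing $\overline{\rho}$ is injective. My plan is to build a $\Bbbk$-linear inverse on a spanning set. Every $\chi_V\in\Bbbk \mathscr G$ with $V\in\Bis_c(\mathscr G)$ can be written as a finite sum of indicator functions of bisections of the form $(s, Y\cap X)$ where $Y\subseteq D(s^*s)$ is compact open in $\wh{E(S)}$, and any such $Y$ admits a presentation as a disjoint union of basic sets $D(e_\alpha)\cap\bigcap_j D(f_{\alpha,j})^c$ with $e_\alpha\leq s^*s$ and $f_{\alpha,j}\leq e_\alpha$. Define the putative lift of $\chi_{(s,Y\cap X)}$ to $\Bbbk S/I$ by $\sum_\alpha \bigl[s\prod_j(e_\alpha-f_{\alpha,j})\bigr]$. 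Independence of the Boolean presentation of $Y$ follows from the identities $\chi_{A\cup B}=\chi_A+\chi_B-\chi_{A\cap B}$ (encoded in $\Bbbk E(S)$ via relations in $I$), reducing any two such presentations by the generators of $I$. Independence of the choice of representative $s$ uses the germ equivalence: if $(s,Y\cap X) = (t,Y\cap X)$ in $\mathscr G$, then for each $x\in Y\cap X$ there exists $u\leq s,t$ with $x\in D(u^*u)$, and the Hausdorff hypothesis on $S$ (Proposition~\ref{p:Hausdorffsemichar}) supplies a finite cover of $Y\cap X$ by such $D(u^*u)$, so both lifts reduce modulo $I$ to the same common refinement.

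The hard part will be the careful bookkeeping for independence of the lift from the choice of $s\in S$ representing a given bisection of $\mathscr G$; this is exactly where the Hausdorff hypothesis is essential, since finite generation of $s^\downarrow\cap t^\downarrow$ is what makes the refinement argument terminate and allows the equality to be witnessed by an element of $I$. Once the lift is shown to be well defined on $\chi_V$'s, one checks it is multiplicative on a pair of bisections (using that $(s,Y\cap X)(t,Z\cap X)=(st,(t\inv\cdot Y)\cap Z\cap X)$) and is an inverse to $\overline{\rho}$, completing the proof.
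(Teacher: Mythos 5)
Your setup (the restriction map $\rho$, its surjectivity, and the containment $I\subseteq\ker\rho$ via the identity $\prod_{i=1}^n(\chi_{D(e)}-\chi_{D(e_i)})=\chi_{D(e)\cap D(e_1)^c\cap\cdots\cap D(e_n)^c}$) agrees with the paper and is fine. The gap is in the reverse containment $\ker\rho\subseteq I$, which is the entire content of the proposition: you propose to build an explicit $\Bbbk$-linear section of $\overline{\rho}$ on the spanning set of indicator functions $\chi_{(s,Y\cap X)}$, but you defer exactly the points that carry all the difficulty, namely independence of the lift from the Boolean presentation of $Y$ and, above all, from the choice of $s$ representing a given bisection of $\mathscr G$. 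As written this is a plan rather than a proof; establishing well-definedness over the germ relation is essentially as hard as reproving Theorem~\ref{t:isothm} relative to $X$, and you would additionally have to verify multiplicativity and that your section inverts $\overline{\rho}$ on all of $\Bbbk\mathscr G$, not merely on single bisections.

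The paper avoids all of this with one observation that your proposal is missing: every element of $\ker\rho$ is already a right multiple of one of the proposed generators. Concretely, if $\p\in\ker\rho$, then since $\mathscr U(S)$ is Hausdorff the support $\p\inv(\Bbbk\setminus\{0\})$ is compact open, so $U=\dom(\p\inv(\Bbbk\setminus\{0\}))$ is a compact open subset of $\wh{E(S)}$, and one checks directly that $\p=\p\ast\chi_U$. Invariance of $X$ forces $U\cap X=\emptyset$: if $x\in U\cap X$, then some $g$ with $\p(g)\neq 0$ has $\dom(g)=x\in X$, hence $g\in\mathscr G\skel 1$, contradicting $\p|_{\mathscr G\skel 1}=0$. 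Thus $\ker\rho$ is generated as an ideal by the $\chi_U$ with $U\subseteq\wh{E(S)}$ compact open and $U\cap X=\emptyset$; writing such a $U$ as a finite union of basic sets $B=D(e)\cap D(e_1)^c\cap\cdots\cap D(e_n)^c$ (each necessarily missing $X$) and using $\chi_{B_1\cup B_2}=\chi_{B_1}+\chi_{B_2}-\chi_{B_1}\ast\chi_{B_2}$ exhibits $\chi_U$ inside the ideal generated by the stated elements $\prod_{i=1}^n(e-e_i)$. Note also that the Hausdorffness of $S$ enters only to guarantee that supports of kernel elements are compact open, not, as you suggest, through the finite generation of $s^{\downarrow}\cap t^{\downarrow}$ in a refinement argument.
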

\begin{proof}
Since $\mathscr G$ is a closed full subgroupoid of $\mathscr U(S)$, with unit space a union of orbits, there is a surjective homomorphism $F\colon \Bbbk\mathscr U(S)\to \Bbbk\mathscr G$ given by $F(\psi)=\psi|_{\mathscr G\skel 1}$.  Let $I=\ker F$.  We claim that $I$ is generated as an ideal by the $\chi_U$ such that $U\subseteq \wh {E(S)}$ is compact open and $U\cap X=\emptyset$.  Indeed, if $\p\in \ker I$, then because $\mathscr U(S)$ is Hausdorff, we have that $\p\inv(\Bbbk\setminus 0)$ is compact open and hence $U=\dom(\p\inv(\Bbbk\setminus 0))$ is compact open.  Moreover, $\p=\p\ast \chi_U$.  If $x\in U$, then $x=\dom(g)$ with $\p(g)\neq 0$. Since $X$ is invariant and $\p|_{\mathscr G\skel 1}=0$, we conclude that $x\notin X$. Thus $U\cap X=\emptyset$.  This proves the claim.

If $U\subseteq \wh{E(S)}$ is compact open with $U\cap X=\emptyset$, then $U=\bigcup_{i=1}^nB_i$ where $B_i$ are basic compact open subsets of $\wh {E(S)}$ (which necessarily satisfy $B_i\cap X=\emptyset$) and hence $\chi_U=\bigvee_{i=1}^n \chi_{B_i}$.  We deduce that $I$ is generated by the $\chi_B$ with $B$ a basic compact open subset of $\wh{E(S)}$ with $B\cap X=\emptyset$.  Such a basic neighborhood is of the form $B=D(e)\cap D(e_1)^c\cap\cdots \cap D(e_n)^c$ where $e_1,\ldots, e_n\leq e$.  Then $\chi_B=\prod_{i=1}^n (\chi_{D(e)}-\chi_{D(e_i)})$.  Under the isomorphism $\Bbbk S\to \Bbbk\mathscr U(S)$, we have that $\chi_B$ is the image of $\prod_{i=1}^n(e-e_i)$ and so the proposition follows.
\end{proof}

As a corollary, we obtain the following result.

\begin{Cor}\label{c:tightpres}
Let $S$ be a Hausdorff inverse semigroup with zero and let $\Bbbk$ be a commutative ring with unit.  Then
\begin{align*}
\Bbbk \mathscr U_T(S) &\cong \Bbbk S/\langle e-\bigvee F\mid F\ \text{covers}\ e\rangle\\
&\cong \Bbbk S/\langle \prod_{f\in F}(e-f)\mid F\ \text{covers}\ e\rangle.
\end{align*}
Hence the map $S\to \Bbbk\mathscr U_T(S)$ given by $s\mapsto \chi_{(s,D(s^*s)\cap \wh{E(S)}_T)}$ is the universal tight homomorphism from $S$ into a $\Bbbk$-algebra.
\end{Cor}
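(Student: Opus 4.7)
The plan is to specialize Proposition~\ref{p:presentation} to the closed invariant subspace $X=\wh{E(S)}_T$ of $\wh{E(S)}$ (closed since $\wh{E(S)}_T$ is closed in the clopen subspace $\wh{E(S)}_0$, invariant by Exel's theorem), and then to translate the resulting emptiness condition into the combinatorial language of covers. This will give
\[\Bbbk\mathscr U_T(S)\cong \Bbbk S/J,\]
where $J$ is generated by the elements $\prod_{i=1}^n(e-e_i)$, $e_i\leq e$, such that the basic open set $D(e)\cap D(e_1)^c\cap\cdots\cap D(e_n)^c$ misses $\wh{E(S)}_T$. (The restriction $e_i\leq e$ is free: replacing $e_i$ by $ee_i$ does not change the set, since $D(e)\cap D(e_i)=D(ee_i)$.)

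The crux is the following dictionary: for $e_1,\ldots,e_n\leq e$, the set $D(e)\cap D(e_1)^c\cap\cdots\cap D(e_n)^c$ is disjoint from $\wh{E(S)}_T$ if and only if $\{e_1,\ldots,e_n\}$ is a cover of $e$. Since the set is open and $UF(E(S))$ is dense in $\wh{E(S)}_T$, it suffices to check the condition on ultrafilters. If $\{e_i\}$ fails to cover $e$, there is $0\neq z\leq e$ with $ze_i=0$ for all $i$; Zorn's lemma supplies an ultrafilter $\mathcal F\ni z$, which then contains $e$ but no $e_i$ (lest $0=ze_i\in\mathcal F$), exhibiting a point of the set in $\wh{E(S)}_T$. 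Conversely, assume $\{e_i\}$ covers $e$ and let $\mathcal F$ be an ultrafilter with $e\in\mathcal F$; supposing for contradiction that no $e_i\in\mathcal F$, maximality of $\mathcal F$ supplies $z_i\in\mathcal F$ with $z_ie_i=0$ for each $i$, and then $z=ez_1\cdots z_n\in\mathcal F$ is nonzero with $z\leq e$ and $ze_i=0$ for all $i$, contradicting the cover property; hence $\mathcal F$ must meet $\{e_i\}$.

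Given the dictionary, the two presentations follow from the identity $\prod_{i=1}^n(e-e_i)=e-\bigvee_{i=1}^n e_i$ for $e_i\leq e$, a direct inclusion-exclusion expansion using $ee_i=e_i$. For the universal property, any tight homomorphism $\psi\colon S\to A$ into a $\Bbbk$-algebra extends uniquely to a $\Bbbk$-algebra map $\widetilde\psi\colon\Bbbk S\to A$, and the relations $e-\bigvee F$ for covers $F$ of $e$ lie in $\ker\widetilde\psi$ precisely because $\psi$ is tight; hence $\widetilde\psi$ factors uniquely through $\Bbbk\mathscr U_T(S)$, and tracking the isomorphism of Theorem~\ref{t:isothm} sends $s\in S$ to $\chi_{(s,D(s^*s)\cap\wh{E(S)}_T)}$. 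The main obstacle is the dictionary above, which rests on the density of ultrafilters in $\wh{E(S)}_T$ together with the standard maximality characterization of ultrafilters in a meet semilattice with zero.
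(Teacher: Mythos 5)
Your proof is correct and follows essentially the same route as the paper: both reduce to Proposition~\ref{p:presentation} and establish the dictionary that a basic open set $D(e)\cap D(e_1)^c\cap\cdots\cap D(e_n)^c$ misses $\wh{E(S)}_T$ exactly when $\{e_1,\ldots,e_n\}$ covers $e$, producing a Zorn's-lemma ultrafilter through a witness $z$ for the non-cover direction. The only cosmetic difference is that for the cover direction the paper appeals directly to the definition of a tight character, whereas you route through density of $UF(E(S))$ in $\wh{E(S)}_T$ and the maximality characterization of ultrafilters; both are fine.
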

\begin{proof}
If $e_1,\ldots, e_n$ (where possibly $n=0$) is a cover of $e$, then $D(e)\cap D(e_1)^c\cap\cdots\cap D(e_n)^c$ cannot contain a tight character.  Conversely, if $e_1,\ldots, e_n$ is not a cover of $e$, then there exists $z$ with $0\neq z\leq e$ and $ze_i=0$ for $i=1,\ldots,n$.  Let $\mathcal F$ be an ultrafilter containing $z$ (such exists by Zorn's lemma).   Then $e\in \mathcal F$ and $e_1,\ldots, e_n\notin \mathcal F$.  Thus $\chi_{\mathcal F}\in D(e)\cap D(e_1)^c\cap\cdots\cap D(e_n)^c\cap \wh{E(S)}_T$ because ultrafilters are tight.  The result follows from Proposition~\ref{p:presentation}.
\end{proof}

Note that since the empty set is a cover of $0$, it follows that $\Bbbk S\to \Bbbk\mathscr U_T(S)$ factors through the contracted semigroup algebra $\Bbbk_0S$.

Since graph inverse semigroups are $0$-$E$-unitary and hence Hausdorff, it is immediate from the corollary and~\cite[Proposition~3.10]{JonesLawson} that if $S$ is a graph inverse semigroup of a directed graph in which the in-degree of every vertex is finite and at least $2$, then $\Bbbk \mathscr U_T(S)$ is isomorphic to the Leavitt path algebra corresponding to the same graph.

\subsection{Groupoids of germs from a dynamical viewpoint}
Let $S$ be an inverse semigroup acting on a locally compact Hausdorff space $X$.  We characterize the orbits and effectiveness of $\mathscr G=S\ltimes X$ in terms of the dynamics of the action of $S$ on $X$.

The first proposition shows that the orbits of $S$ and $S\ltimes X$ are the same.
\begin{Prop}\label{orbits}
Let $S$ be an inverse semigroup acting on a space $X$.  If $x\in X$, then $\mathcal O_x=\{sx\mid s\in S, x\in X_{s^*s}\}$.
\end{Prop}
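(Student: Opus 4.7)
The plan is to prove this by direct unpacking of definitions, since the proposition simply identifies the groupoid-theoretic orbit of $x$ in $S\ltimes X$ with the dynamical orbit $\{sx\mid x\in X_{s^*s}\}$ of $x$ under the inverse semigroup action.

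First I would recall what the orbit of $x$ in the groupoid $\mathscr G=S\ltimes X$ means. By definition, $\mathcal O_x=\{\ran(g)\mid g\in \mathscr G\skel 1,\ \dom(g)=x\}$. Next, I would invoke the explicit construction of $S\ltimes X$ given earlier in the excerpt: an arrow $g\in \mathscr G\skel 1$ is a germ $[s,y]$ with $y\in X_{s^*s}$, and the source and range maps are $\dom([s,y])=y$ and $\ran([s,y])=sy$. Consequently, an arrow with domain $x$ is precisely a germ of the form $[s,x]$ where $s\in S$ satisfies $x\in X_{s^*s}$.

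Combining these two observations, $\mathcal O_x$ coincides with the set of $\ran([s,x])=sx$ as $s$ ranges over those elements of $S$ with $x\in X_{s^*s}$. This gives the desired equality $\mathcal O_x=\{sx\mid s\in S,\ x\in X_{s^*s}\}$. The only subtlety worth a remark is that the representative $s$ of a germ $[s,x]$ is not unique, but different representatives $s,t$ with $[s,x]=[t,x]$ produce the same range $sx=tx$ (since they agree on a neighborhood of $x$ by definition of the equivalence $\sim$), so the set $\{sx\mid x\in X_{s^*s}\}$ is well-defined regardless of representative choice.

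I do not expect any genuine obstacle here; the statement is essentially a definitional reformulation, and the proof should be a short paragraph. The main thing to be careful about is to avoid conflating the orbit of $x$ with the set $\{y\mid \exists s,\ sx=y\}$ without checking that $x$ lies in the domain of $s$, but the definition of $X_s$ in \eqref{defineXs} and the structure of $\mathscr G\skel 1$ as germs make this automatic.
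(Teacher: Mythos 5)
Your proof is correct and follows exactly the paper's own argument: the paper also notes that this is immediate from the facts that $[s,x]$ is an arrow if and only if $x\in X_{s^*s}$, with $\dom([s,x])=x$ and $\ran([s,x])=sx$. Your extra remark on well-definedness with respect to the choice of germ representative is fine but not needed, since the equality of sets is being read off directly from the arrows themselves.
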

\begin{proof}
This is immediate from the fact that $[s,x]$ is an arrow if and only if $x\in X_{s^*s}$ and that $\dom([s,x])=x$ and $\ran([s,x])=sx$.
\end{proof}

We say that the action of $S$ on $X$ is \emph{minimal} if there are no proper, non-empty closed $S$-invariant subspaces, or equivalently, each orbit under $S$ is dense.  We then have the following corollary of Proposition~\ref{orbits}.

\begin{Cor}\label{c:minimalactionofinv}
Let $S$ be an inverse semigroup acting on a space $X$.  Then $S\ltimes X$ is minimal if and only if the action of $S$ on $X$ is minimal.
\end{Cor}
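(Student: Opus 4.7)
The plan is to reduce the corollary immediately to Proposition~\ref{orbits}, which identifies the orbit of a point $x \in X$ under the groupoid $S \ltimes X$ with its orbit under the inverse semigroup action, namely $\mathcal O_x = \{sx \mid s \in S,\ x \in X_{s^*s}\}$. Since both notions of minimality (for the groupoid $\mathscr G = S \ltimes X$ and for the action of $S$ on $X$) are phrased as ``every orbit is dense'' — equivalently, ``$X$ has no proper nonempty closed invariant subset'' — the result should drop out once the two orbit relations are identified.

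First I would make explicit that a subset $Y \subseteq X$ is invariant for the $\Bis_c(\mathscr G)$-action in the sense used in Section~2 if and only if it is $S$-invariant, i.e.\ $s(Y \cap X_{s^*s}) \subseteq Y$ for every $s \in S$. The forward direction is immediate: if $y \in Y \cap X_{s^*s}$ then $[s,y]$ is an arrow with source $y$ and range $sy$, so $sy$ lies in the groupoid-orbit of $y$ and hence in $Y$. For the reverse, Proposition~\ref{orbits} shows that every arrow in $S \ltimes X$ is of the form $[s,y]$ with $y \in X_{s^*s}$, so $S$-invariance already forces closure under the range map. Thus $Y$ is a union of $S$-orbits iff it is a union of $\mathscr G$-orbits. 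Applying this to $Y$ closed shows that $\mathscr G$ has no proper nonempty closed invariant subset iff the $S$-action has none, which is the desired equivalence.

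The argument is essentially immediate from Proposition~\ref{orbits}; there is no real obstacle, only the bookkeeping of matching the two definitions of invariance, which as sketched above is routine.
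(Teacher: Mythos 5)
Your proposal is correct and matches the paper's approach: the paper presents this corollary as an immediate consequence of Proposition~\ref{orbits}, since that proposition identifies the $\mathscr G$-orbits with the $S$-orbits and minimality in both settings means every orbit is dense. Your extra bookkeeping about invariant subsets is fine but not needed beyond what the paper leaves implicit.
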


Next we consider effectiveness.

\begin{Prop}\label{p:effectiveprop}
Let $S$ be an inverse semigroup acting on a space $X$.  Then $S\ltimes X$ is effective if and only if $X_s=\mathrm{Int}(\mathrm{Fix}(s))$ for all $s\in S$.
\end{Prop}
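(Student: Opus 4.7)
The plan is to translate the effectiveness condition from topological language about $S \ltimes X$ into pointwise language about elements of $S$ and $X$, at which point the equivalence becomes essentially tautological. The key translations I would establish first are: (a) an arrow $[s,x]$ lies in $\mathrm{Is}(S\ltimes X)^{(1)}$ if and only if $sx=x$, i.e., $x \in \mathrm{Fix}(s)$, which is immediate from the definitions of $\dom$ and $\ran$; and (b) an arrow $[s,x]$ is a unit (lies in $(S\ltimes X)^{(0)}$) if and only if $x \in X_s$. This second statement requires a brief unraveling: $[s,x] = [e,x]$ for some $e \in E(S)$ with $x \in X_e$ iff there is $u \leq s,e$ with $x \in X_{u^*u}$; and $u \leq e$ forces $u$ to be idempotent with $X_u = X_{u^*u}$, so the condition is just that there is an idempotent $u \leq s$ with $x \in X_u$.

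Next I would combine (a) with the observation (noted in the discussion preceding Proposition~\ref{p:hausdorffcondition}) that every arrow $[s,x]$ has a basis of neighborhoods of the form $(s,V)$ with $V \subseteq X_{s^*s}$ open. This gives the third key translation: $[s,x] \in \mathrm{Int}(\mathrm{Is}(S\ltimes X)^{(1)})$ if and only if there is an open $V$ with $x \in V \subseteq X_{s^*s} \cap \mathrm{Fix}(s)$, equivalently, if and only if $x \in \mathrm{Int}(\mathrm{Fix}(s))$ (using that $\mathrm{Fix}(s) \subseteq X_{s^*s}$ since $sx$ must be defined).

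With these three equivalences in hand, both implications are short. For $(\Leftarrow)$, assuming $X_s = \mathrm{Int}(\mathrm{Fix}(s))$ for all $s$, any $[s,x]$ in $\mathrm{Int}(\mathrm{Is}(S\ltimes X)^{(1)})$ satisfies $x \in \mathrm{Int}(\mathrm{Fix}(s)) = X_s$ by the third translation, hence is a unit by (b), proving $\mathrm{Int}(\mathrm{Is}(S\ltimes X)^{(1)}) \subseteq (S\ltimes X)^{(0)}$. For $(\Rightarrow)$, the inclusion $X_s \subseteq \mathrm{Int}(\mathrm{Fix}(s))$ was already noted in the paper, and for the reverse, given $x \in \mathrm{Int}(\mathrm{Fix}(s))$, the third translation places $[s,x]$ in $\mathrm{Int}(\mathrm{Is}(S\ltimes X)^{(1)}) = (S\ltimes X)^{(0)}$, and then (b) gives $x \in X_s$.

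There is no real obstacle here; the only point requiring care is the characterization (b) of units via the germ equivalence, which depends on the small observation that $u \leq e \in E(S)$ forces $u \in E(S)$ with $u^*u=u$. Once this is pinned down, the argument is a mechanical unpacking of definitions and needs no further input.
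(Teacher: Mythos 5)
Your proposal is correct and follows essentially the same route as the paper: the paper's proof likewise rests on the observations that $(s,U)\subseteq \Is(S\ltimes X)$ exactly when $U\subseteq\mathrm{Fix}(s)$, that the sets $(s,U)$ form a neighborhood basis at $[s,x]$, and that $[s,x]$ is a unit precisely when $x\in X_e$ for some idempotent $e\leq s$ (i.e., $x\in X_s$). You have merely packaged these facts as three explicit ``translations'' before running the two implications, which the paper does inline; the mathematical content is identical.
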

\begin{proof}
Let $\mathscr G=S\ltimes X$.  Recall that $X_s\subseteq \mathrm{Int}(\mathrm{Fix}(s))$ always is true.  Suppose that $\mathscr G$ is effective and let $x\in \mathrm{Int}(\mathrm{Fix}(s))$.  Note that $[s,y]\in \Is(\mathscr G)$ for all $y\in \mathrm{Fix}(s)$.  Let $U$ be an open neighborhood of $x$ contained in $\mathrm{Fix}(s)$.  Then $(s,U)$ is an open neighborhood of $[s,x]$ contained in $\Is(\mathscr G)$ and hence is contained in $\mathscr G\skel 0$.  In particular, $[s,x]$ is an identity and so there is an idempotent $e\leq s$ with $x\in X_e$.  Thus $x\in X_s$.

Conversely, suppose that $X_s=\mathrm{Int}(\mathrm{Fix}(s))$ and that $[s,x]\in \mathrm{Int}(\Is(\mathscr G))$. Let $(s,U)$ be a basic neighborhood of $[s,x]$ contained in $\Is(\mathscr G)$.  Then $sy=\ran([s,y])=\dom([s,y])=y$ for all $y\in U$ and so $x\in U\subseteq \mathrm{Fix}(s)$.  Thus $x\in \mathrm{Int}(\mathrm{Fix}(s))=X_s$ and so $x\in X_e$ for some $e\leq s$.  But then $[s,x]=[e,x]$ is an identity and so $\mathscr G$ is effective.
\end{proof}

\begin{Cor}\label{c:faithfulimplieseffective}
Let $S$ be an inverse semigroup acting faithfully on a space $X$ such that the $X_e$ with $e\in E(S)$ form a basis for the topology on $X$.  Then $S\ltimes X$ is effective.
\end{Cor}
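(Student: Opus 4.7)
The plan is to reduce the corollary immediately to two results already proved in the excerpt, namely Proposition~\ref{p:faithful} and Proposition~\ref{p:effectiveprop}. The hypotheses of the corollary are precisely the hypotheses of Proposition~\ref{p:faithful}, so that proposition will give the identity $X_s=\mathrm{Int}(\mathrm{Fix}(s))$ for every $s\in S$. But this equality, holding for all $s$, is exactly the criterion for effectiveness of the groupoid of germs $S\ltimes X$ supplied by Proposition~\ref{p:effectiveprop}.

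So the proof will consist of just two sentences: first invoke Proposition~\ref{p:faithful} to obtain the pointwise equality $X_s=\mathrm{Int}(\mathrm{Fix}(s))$ for all $s\in S$ from faithfulness and the basis condition, and then feed this equality into Proposition~\ref{p:effectiveprop} to conclude that $S\ltimes X$ is effective. No additional calculation is needed and no further hypothesis on $X$ (such as local compactness, Hausdorffness, or ampleness) is required beyond what is assumed in the two cited propositions.

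Because the argument is entirely a chain of citations, there is no real obstacle; the only thing to check is that the hypotheses match up cleanly. In particular, Proposition~\ref{p:effectiveprop} is stated for inverse semigroup actions on an arbitrary space, with no standing assumption beyond having an action, and Proposition~\ref{p:faithful} likewise only needs a faithful action with the $X_e$ forming a basis. Thus the corollary is simply the composition of these two equivalences.
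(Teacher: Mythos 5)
Your proposal is correct and is exactly the paper's own proof: the author likewise cites Proposition~\ref{p:faithful} to get $X_s=\mathrm{Int}(\mathrm{Fix}(s))$ for all $s\in S$ and then applies the effectiveness criterion of Proposition~\ref{p:effectiveprop}. Nothing further is needed.
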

\begin{proof}
This follows from Proposition~\ref{p:effectiveprop} and Proposition~\ref{p:faithful}.
\end{proof}

Proposition~\ref{p:effectiveprop} has a simpler formulation for $E$-unitary and $0$-$E$-unitary inverse semigroups.

\begin{Cor}\label{c:topologicallyfree}
Suppose that $\theta\colon S\to I_X$ is an action of an inverse semigroup on a space $X$.  If $S$ is $E$-unitary, or $S$ is $0$-$E$-unitary and $\theta(0)=0$, then $S\ltimes X$ is effective if and only if $\mathrm{Int}(\mathrm{Fix}(s))=\emptyset$ for all $s\in S\setminus E(S)$.
\end{Cor}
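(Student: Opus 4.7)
The plan is to reduce the problem to Proposition~\ref{p:effectiveprop}, which characterizes effectiveness as the equality $X_s = \mathrm{Int}(\mathrm{Fix}(s))$ for every $s\in S$. The strategy is to observe that this equality is automatic for idempotents, and then to use the ($0$-)$E$-unitary hypothesis to force $X_s=\emptyset$ for every $s\in S\setminus E(S)$, so that the only non-trivial instances of Proposition~\ref{p:effectiveprop} are precisely the ones appearing in the corollary.

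Step one: handle the idempotent case. If $s=e\in E(S)$, then $\theta(e)$ is the partial identity on its domain $X_e$, so $\mathrm{Fix}(e)=X_e$. Since $X_e$ is open in $X$ (as the domain of a homeomorphism between open subsets), $\mathrm{Int}(\mathrm{Fix}(e))=X_e$, and the two definitions of $X_e$ agree as noted after \eqref{defineXs}. Hence the condition in Proposition~\ref{p:effectiveprop} holds for $s\in E(S)$ with no further hypothesis.

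Step two: show $X_s=\emptyset$ for $s\in S\setminus E(S)$. By definition \eqref{defineXs}, $X_s = \bigcup\{X_e \mid e\in E(S),\ e\leq s\}$. If $S$ is $E$-unitary and $e\leq s$ is an idempotent, then $s\geq e\in E(S)$ forces $s\in E(S)$, contradicting our assumption; so there is no such $e$ and $X_s=\emptyset$. If instead $S$ is $0$-$E$-unitary with $\theta(0)=0$, then for any idempotent $e\leq s$ either $e=0$, in which case $X_e=X_0=\mathrm{dom}(\theta(0))=\emptyset$, or $e\neq 0$, in which case $0$-$E$-unitarity again forces the contradiction $s\in E(S)$; hence $X_s=\emptyset$.

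Step three: assemble. Combining steps one and two with Proposition~\ref{p:effectiveprop}, $S\ltimes X$ is effective if and only if $\mathrm{Int}(\mathrm{Fix}(s))=X_s=\emptyset$ for every $s\in S\setminus E(S)$, which is exactly the stated criterion. There is no serious obstacle here; the only subtlety is making sure the $0$-$E$-unitary case correctly disposes of the idempotent $e=0$ by invoking $\theta(0)=0$, which is precisely what the extra hypothesis $\theta(0)=0$ is included to guarantee.
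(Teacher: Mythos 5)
Your proposal is correct and follows essentially the same route as the paper: the paper's proof likewise observes that the ($0$-)$E$-unitary hypothesis (together with $X_0=\emptyset$ in the zero case) forces $X_s=\emptyset$ for all $s\in S\setminus E(S)$ and then invokes Proposition~\ref{p:effectiveprop}. You merely spell out the idempotent case, which the paper leaves implicit.
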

\begin{proof}
We handle just the $0$-$E$-unitary case, as the other case is simpler.  Note that if $S$ is $0$-$E$-unitary and if $X_0=\emptyset$, then $X_s=\emptyset$ for any $s\in S\setminus E(S)$.  The result is now immediate from Proposition~\ref{p:effectiveprop}.
\end{proof}

We now have the following theorem as a consequence of Corollary~\ref{simplehaus}.

\begin{Thm}\label{t:simplefromaction}
Let $S$ be an inverse semigroup and $\Bbbk$ a field.  Suppose that $S$ acts on a Hausdorff space $X$ with a basis of compact open sets such that $X_s$ is clopen for all $s\in S$. Setting $\mathscr G=S\ltimes X$, one has $\Bbbk \mathscr G$ is simple if and only if the action of $S$ is minimal and $X_s=\mathrm{Int}(\mathrm{Fix}(s))$ for all $s\in S$.
\end{Thm}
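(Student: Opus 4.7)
The plan is to apply Corollary~\ref{simplehaus} to the groupoid $\mathscr G = S\ltimes X$. For this I need to check three things: (i) $\mathscr G$ is an ample Hausdorff groupoid, so that Corollary~\ref{simplehaus} is applicable; (ii) effectiveness of $\mathscr G$ corresponds to the condition $X_s=\mathrm{Int}(\mathrm{Fix}(s))$ for all $s\in S$; and (iii) minimality of $\mathscr G$ corresponds to minimality of the action of $S$ on $X$.

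First I would verify that $\mathscr G$ is ample. This is the remark immediately preceding the \'Etale groupoid algebras subsection: if $X$ is Hausdorff with a basis of compact open sets, then $S\ltimes X$ is ample. Next, Hausdorffness of $\mathscr G$ follows from Proposition~\ref{p:hausdorffcondition}: the hypothesis that each $X_e$ (with $e\in E(S)$) is clopen is part of the standing assumption (taking $s=e\in E(S)$ in ``$X_s$ is clopen for all $s\in S$''), and the hypothesis that $X_s$ is closed for each $s\in S$ is given directly, so Proposition~\ref{p:hausdorffcondition} applies and $\mathscr G$ is Hausdorff.

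With $\mathscr G$ Hausdorff and ample, Corollary~\ref{simplehaus} asserts that $\Bbbk\mathscr G$ is simple if and only if $\mathscr G$ is effective and minimal. I then translate each of these conditions back to properties of the action. By Corollary~\ref{c:minimalactionofinv}, minimality of $\mathscr G$ is equivalent to minimality of the action of $S$ on $X$. By Proposition~\ref{p:effectiveprop}, effectiveness of $\mathscr G$ is equivalent to $X_s=\mathrm{Int}(\mathrm{Fix}(s))$ for all $s\in S$. Combining these two characterizations with Corollary~\ref{simplehaus} yields exactly the biconditional in the statement.

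There is no real obstacle here; the theorem is essentially a repackaging of Corollary~\ref{simplehaus}, Corollary~\ref{c:minimalactionofinv}, Proposition~\ref{p:effectiveprop}, and Proposition~\ref{p:hausdorffcondition}. The only place that requires any care is confirming that the clopen hypothesis on the $X_s$ gives both the clopen hypothesis on the idempotent-indexed pieces needed for Proposition~\ref{p:hausdorffcondition} and the closedness of $X_s$ for arbitrary $s\in S$ also needed there; since the assumption is made uniformly for all $s\in S$, this is immediate.
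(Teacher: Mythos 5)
Your proof is correct and follows exactly the same route as the paper: establish that $\mathscr G$ is ample and Hausdorff via Proposition~\ref{p:hausdorffcondition}, then combine Corollary~\ref{simplehaus} with the translations given by Corollary~\ref{c:minimalactionofinv} and Proposition~\ref{p:effectiveprop}. Your extra remark that the clopen hypothesis on all $X_s$ in particular covers the idempotent case needed for Proposition~\ref{p:hausdorffcondition} is a correct and worthwhile check, though the paper leaves it implicit.
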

\begin{proof}
We have that $\mathscr G$ is Hausdorff by Proposition~\ref{p:hausdorffcondition}.  In light of Corollary~\ref{c:minimalactionofinv} and  Proposition~\ref{p:effectiveprop}, the result follows from Corollary~\ref{simplehaus}.
\end{proof}

\subsection{Simplicity of contracted inverse semigroup algebras}
If $S$ is a non-trivial inverse semigroup and $\Bbbk$ is a field, then $\Bbbk S$ is never simple because $\Bbbk$ is a homomorphic image via the mapping $s\mapsto 1$ for $s\in S$.  But if $S$ is an inverse semigroup with zero, then the contracted semigroup algebra $\Bbbk_0S$ can be simple.  It was observed by Munn that a necessary condition is that $S$ be congruence-free, since if $\equiv$ is a non-trivial congruence on $S$, then there is an induced surjective homomorphism $\Bbbk_0S\to \Bbbk_0[S/{\equiv}]$ which has a non-trivial kernel.  But even congruence-free inverse semigroups with zero can have non-simple contracted semigroup algebras.  For instance, the polycyclic inverse monoid on $2$ generators~\cite{Lawson} is congruence-free and its algebra has as a quotient the Leavitt path algebra associated to a rose with $2$ petals.  Munn asked~\cite{MunnAlgebraSurvey} for a characterization of when a congruence-free inverse semigroup with zero has a simple contracted semigroup algebra. We provide an answer to this question under the additional assumption that the inverse semigroup is Hausdorff.  In particular, this applies to $0$-$E$-unitary inverse semigroups.  We also show that the universal tight groupoid of a congruence-free Hausdorff inverse semigroup is always simple.

Recall that an inverse semigroup $S$ with zero is called \emph{$0$-simple} if it contains no proper, non-zero ideals, i.e, $SsS=S$ for all $s\neq 0$.
An inverse semigroup~$S$ is called \emph{fundamental} if every non-trivial congruence
identifies some pair of idempotents. A semilattice $E$ with zero is called
 \emph{$0$-disjunctive} if for all $0<e<f$, there exists $0<e'<f$ such
 that $ee'=0$.  It is well known~\cite{petrich} that an inverse semigroup $S$ with zero is congruence-free if and only if it is $0$-simple, fundamental and $E(S)$ is $0$-disjunctive.  Moreover, $S$ is fundamental if and only if the centralizer of $E(S)$ is $E(S)$~\cite{Lawson}, or equivalently,  $s^*es=e$ for all idempotents $e\leq s$ implies $s\in E(S)$. If every maximal subgroup of $S$ is trivial, then $S$ is fundamental.

\begin{Lemma}\label{l:ultrafilter}
Let $S$ be an inverse semigroup and suppose that  $S$ is fundamental and $E(S)$ is $0$-disjunctive.  Then $S\ltimes UF(E(S))$ is effective.
\end{Lemma}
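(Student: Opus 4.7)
My approach is to apply Proposition~\ref{p:effectiveprop}: effectiveness of $S\ltimes UF(E(S))$ is equivalent to the identity $X_s=\mathrm{Int}(\mathrm{Fix}(s))$ for every $s\in S$. Since $X_s\subseteq\mathrm{Int}(\mathrm{Fix}(s))$ is automatic, the task is to show that every ultrafilter $\mathcal F$ in $\mathrm{Int}(\mathrm{Fix}(s))$ contains an idempotent that is $\leq s$ in $S$.

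The first preparatory observation is that the sets $X_f=\{\mathcal G\in UF(E(S))\mid f\in\mathcal G\}$, $f\in E(S)$, form a basis for the subspace topology on $UF(E(S))$. Inside a generic basic neighborhood $D(e)\cap D(e_1)^c\cap\cdots\cap D(e_n)^c\cap UF(E(S))$ of an ultrafilter $\mathcal G$, ultrafilter maximality furnishes for each $i$ some $f_i\in\mathcal G$ with $f_ie_i=0$, and then $f=ef_1\cdots f_n\in\mathcal G$ with $X_f$ contained in the given set. Using this refinement, I pick $f\in\mathcal F$ with $X_f\subseteq\mathrm{Fix}(s)$, and (replacing $f$ by $fs^*s\in\mathcal F$) assume $f\leq s^*s$. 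Every ultrafilter $\mathcal G\ni f$ then satisfies $s\mathcal G=\mathcal G$, which unpacks to the condition that $s^*es\in\mathcal G\iff e\in\mathcal G$ for every $e\in E(S)$.

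The heart of the argument is to turn this pointwise condition into the algebraic identity
\[
s^*esf=ef\qquad\text{for every }e\in E(S),
\]
and this is exactly where $0$-disjunctivity enters. If $s^*esf\neq ef$ then one of these idempotents fails to lie below the other; say $(s^*es)f\cdot(ef)<(s^*es)f$, in which case $0$-disjunctivity supplies $0<z<(s^*es)f$ with $z\cdot ef=0$. Any ultrafilter $\mathcal G\ni z$ then contains $s^*es$ and $f$ but not $e$, contradicting the fixed-point hypothesis. The symmetric case is analogous.

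Finally, set $u=sf$. A direct computation using $f\leq s^*s$ gives $u^*u=fs^*sf=f$, and invoking the identity above yields $u^*eu=fs^*esf=ef=eu^*u$ for every $e\in E(S)$. In particular $u^*eu=e$ for every idempotent $e\leq u^*u$, so $u$ lies in the centralizer of $E(S)$; by fundamentality $u\in E(S)$, and hence $u=u^*u=f$, i.e., $sf=f$. Thus $f$ is an idempotent satisfying $f\leq s$ in $S$ and $f\in\mathcal F$, so $\mathcal F\in X_f\subseteq X_s$ as required. The main obstacle I expect is the $0$-disjunctivity step, which is used twice (to refine the basis and then to extract the identity $s^*esf=ef$); once this identity is in place, the fundamentality hypothesis closes the proof in a single clean stroke.
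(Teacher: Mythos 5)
Your proof is correct, but it takes a more hands-on route than the paper's. The paper likewise begins by checking that the sets $K_e=D(e)\cap UF(E(S))$ form a basis (using only ultrafilter maximality), but it then reduces the whole lemma to \emph{faithfulness} of the $S$-action on $UF(E(S))$: $0$-disjunctivity shows $e\mapsto K_e$ is injective, so the action homomorphism is idempotent-separating, hence injective because $S$ is fundamental, and Corollary~\ref{c:faithfulimplieseffective} (i.e., Propositions~\ref{p:faithful} and~\ref{p:effectiveprop} combined) finishes. You instead verify the criterion of Proposition~\ref{p:effectiveprop} directly for a fixed $s$ and a fixed ultrafilter $\mathcal F$: you extract the algebraic identity $s^*esf=ef$ from the fixed-point hypothesis via $0$-disjunctivity, and then observe that $u=sf$ satisfies $u^*eu=eu^*u=(u^*u)e(u^*u)$ for all $e$, so $u$ is identified with the idempotent $u^*u=f$ by fundamentality. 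The two arguments deploy the hypotheses in exactly parallel ways; yours essentially inlines Proposition~\ref{p:faithful} for the single element $s$, which makes the proof self-contained at the cost of redoing work the paper does once and for all. Two small repairs: in the $0$-disjunctivity step you should also dispose of the degenerate sub-case where $(s^*es)f\cdot ef=0$ but neither idempotent is zero (then any ultrafilter containing one of them already excludes the other, with no appeal to $0$-disjunctivity), and your closing remark that $0$-disjunctivity is used to refine the basis is not quite accurate --- that refinement needs only maximality of ultrafilters, which is also all the paper uses there.
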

\begin{proof}
Let $K_e=UF(E(S))\cap D(e)$ for $e\in E(S)$.  Then it is easy to see that the $K_e$ with $e\in E(S)$ form a basis for the topology on $UF(E(S))$ (cf.~\cite{Lawsontight}).  Indeed, if $\mathcal F$ is an ultrafilter with $e\in \mathcal F$ and $e_1,\ldots, e_n\notin \mathcal F$, then by maximality it follows, for $i=1,\ldots, n$, that $e_ie_i'=0$ for some $e_i'\in \mathcal F$.  Taking, $e'=ee_1'\cdots e_n'$, we have $e'\in \mathcal F$, $e'\leq e$ and $e'e_i=0$ for $i=1,\ldots, n$. Thus $K_{e'}\subseteq D(e)\cap D(e_1)^c\cap\cdots \cap D(e_n)^c\cap UF(E(S))$ with $\chi_{\mathscr F}\in K_{e'}$.

 Therefore, by Corollary~\ref{c:faithfulimplieseffective}, it suffices to prove that $S$ acts faithfully on $UF(E(S))$.  First note that $e\mapsto K_e$ is injective. Indeed, first observe that $K_e=\emptyset$ if and only if $e=0$, since any non-zero idempotent is contained in an ultrafilter by Zorn's lemma.  Suppose that $K_e=K_f$ with $0\neq e,f\in E(S)$.  Without loss of generality, assume $e\nleq f$. Then $K_{ef}=K_e$ and $ef< e$.  Also $ef\neq 0$ because $K_e=K_{ef}$ is non-empty. By the definition of $0$-disjunctive, there exists $0<e'<e$ such that $efe'=0$.  If $\mathcal F$ is an ultrafilter containing $e'$, then $e\in \mathcal F$ and $ef\notin\mathcal F$.  This contradicts $K_e=K_{ef}$ and hence the assumption that $K_e=K_f$.  It follows that if $\theta\colon S\to I_{UF(E(S))}$ is the action homomorphism, then $\theta$ is idempotent separating.  
As every idempotent separating homomorphism from a fundamental inverse semigroups is injective, we conclude that $S$ acts faithfully on $UF(E(S))$.  This completes the proof.
\end{proof}

As a corollary, we obtain an effectiveness result for $\mathscr U_T(S)$.

\begin{Cor}\label{c:tightiseffective}
Let $S$ be a fundamental inverse semigroup such that $E(S)$ is $0$-disjunctive.  Suppose, moreover, that $\mathscr U_T(S)$ is Hausdorff (e.g., if $S$ is Hausdorff).  Then $\mathscr U_T(S)$ is effective.
\end{Cor}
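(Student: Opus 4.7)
The plan is to verify the effectiveness criterion of Proposition~\ref{p:effectiveprop} for the action of $S$ on $X := \wh{E(S)}_T$; writing $\mathscr G = \mathscr U_T(S) = S \ltimes X$, this amounts to checking that $X_s = \mathrm{Int}(\mathrm{Fix}(s))$ for every $s \in S$, with interior and fixed-point set computed in $X$. The inclusion $X_s \subseteq \mathrm{Int}(\mathrm{Fix}(s))$ is automatic from the discussion following \eqref{defineXs}, so the real task is the reverse inclusion.

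For this I would approximate by ultrafilters, exploiting the fact that $Y := UF(E(S))$ is dense in $X = \overline Y$ by the definition of tight characters. Fix $\theta \in \mathrm{Int}(\mathrm{Fix}(s))$ and choose an open neighborhood $U$ of $\theta$ in $X$ with $U \subseteq \mathrm{Fix}(s)$. For any open neighborhood $V$ of $\theta$ in $X$, density yields some $\theta' \in V \cap U \cap Y$. Since $Y$ is $S$-invariant, $\theta'$ is fixed by $s$ within $Y$, and $U \cap Y$ is a neighborhood of $\theta'$ in $Y$ contained in the fixed-point set of $s$ in $Y$; thus $\theta'$ lies in the interior of that fixed-point set computed in $Y$. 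By Lemma~\ref{l:ultrafilter}, $S \ltimes Y$ is effective, so Proposition~\ref{p:effectiveprop} forces $\theta'(e) = 1$ for some idempotent $e \leq s$, whence $\theta' \in D(e) \cap X = X_e \subseteq X_s$. Therefore $V \cap X_s \neq \emptyset$ for every open neighborhood $V$ of $\theta$, and so $\theta \in \overline{X_s}$.

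The Hausdorff hypothesis enters at the very last step. Each $X_e$ with $e \in E(S)$ is clopen in $X$ because $D(e)$ is clopen in $\wh{E(S)}$, so Proposition~\ref{p:hausdorffcondition} applies, and the assumption that $\mathscr G$ is Hausdorff forces $X_s$ to be closed in $X$. This gives $\theta \in X_s$ and completes the verification. The main obstacle is precisely this closure step: $X_s = \bigcup_{e \leq s,\, e \in E(S)} X_e$ is a union of clopens that need not be closed in general, so without a hypothesis that converts ``$\theta \in \overline{X_s}$'' into ``$\theta \in X_s$'' there is no way to promote the effectiveness statement from $UF(E(S))$ to its closure $\wh{E(S)}_T$.
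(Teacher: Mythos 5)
Your proof is correct and follows essentially the same route as the paper: approximate a point of $\mathrm{Int}(\mathrm{Fix}(s))$ by ultrafilters, invoke Lemma~\ref{l:ultrafilter} together with Proposition~\ref{p:effectiveprop} to place those ultrafilters in $X_s$, and then use the Hausdorff hypothesis via Proposition~\ref{p:hausdorffcondition} to conclude $X_s$ is closed. The only cosmetic difference is that the paper phrases the approximation with a convergent net while you argue directly that $\theta\in \overline{X_s}$; you also spell out the clopenness of the $X_e$ needed to apply Proposition~\ref{p:hausdorffcondition}, which the paper leaves implicit.
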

\begin{proof}
Let $X=\wh{E(S)}_T$ and suppose that $\theta\in X$ belongs to $\mathrm{Int}(\mathrm{Fix}(s))$.  Then $\theta=\lim \theta_{\alpha}$ where $\{\theta_{\alpha}\}_{\alpha\in D}$ is a net in $UF(E(S))$.  Since $\theta$ is an interior point of $\mathrm{Fix}(s)$, $\theta_{\alpha}\in \mathrm{Int}(\mathrm{Fix}(s))$ for all $\alpha$ sufficiently large. By Lemma~\ref{l:ultrafilter} and Proposition~\ref{p:effectiveprop} applied to $UF(E(S))$ we conclude that $\theta_{\alpha}\in X_s$ for $\alpha$ sufficiently large.  But $X_s$ is closed by Proposition~\ref{p:hausdorffcondition} and thus $\theta\in X_s$. We conclude that $\mathscr U_T(S)$ is effective by Proposition~\ref{p:effectiveprop}.
\end{proof}

Next we prove that if $S$ is $0$-simple, then $\mathscr U_T(S)$ is minimal.

\begin{Prop}\label{p:minimality}
Let $S$ be a $0$-simple inverse semigroup.  Then $\mathscr U_T(S)$ is minimal.
\end{Prop}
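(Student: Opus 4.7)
The plan is to prove minimality of $\mathscr U_T(S)$ by showing that, for the action of $S$ on $X=\wh{E(S)}_T$, every $S$-orbit is dense; by Corollary~\ref{c:minimalactionofinv} this is equivalent to minimality of $\mathscr U_T(S)$. So let $\theta\in \wh{E(S)}_T$ be arbitrary and let $\mathcal F=\theta\inv(1)$; I aim to show that $S\theta$ is dense in $\wh{E(S)}_T$.

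First I would reduce the density question to a single, clean statement about idempotents. A non-empty basic open subset of $\wh{E(S)}_T$ has the form $V=D(e)\cap D(e_1)^c\cap\cdots\cap D(e_n)^c\cap\wh{E(S)}_T$. Since ultrafilters are dense in $\wh{E(S)}_T$ (Exel), $V$ contains some $\chi_{\mathcal F'}$, and then the argument used in Lemma~\ref{l:ultrafilter} produces $e'\in\mathcal F'$ with $e'\leq e$ and $e'e_i=0$ for all $i$. A straightforward check shows $D(e')\cap \wh{E(S)}_T\subseteq V$, and $e'\neq 0$ since $\mathcal F'$ is proper. Thus it suffices to show that, for every non-zero $e'\in E(S)$, there exists $s\in S$ with $\theta(s^*s)=1$ and $\theta(s^*e's)=1$; then $s\theta\in S\theta\cap D(e')\subseteq S\theta\cap V$.

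The core step is the construction of $s$, which is where $0$-simplicity enters. Pick any non-zero $f\in \mathcal F$ (existing because $\theta$ is non-zero and proper). Since $Se'S=S$ by $0$-simplicity, I can write $f=ae'b$ for some $a,b\in S$, and I claim that $s=a^*f$ has the required properties. A short calculation using $f=ae'b$ shows $aa^*\cdot f=f$, i.e., $f\leq aa^*$, whence $s^*s=f(aa^*)f=f$; a parallel manipulation, repeatedly using that idempotents commute, yields $ss^*=a^*fa\leq e'$. Finally, from $s=ss^*s$ and $e'\cdot ss^*=ss^*$ one obtains $e's=s$, and therefore $s^*e's=s^*s=f$. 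Thus $\theta(s^*s)=\theta(s^*e's)=\theta(f)=1$, giving the desired element of $S\theta$ in $D(e')$.

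The only genuinely non-trivial part of this argument is that middle step: given two non-zero idempotents $e',f$ of a $0$-simple inverse semigroup, one must produce a Munn-type element $s\in S$ with $s^*s=f$ and $ss^*\leq e'$, which is precisely what the assignment $s=a^*f$ (coming from the factorization $f=ae'b$) accomplishes. Once that is in place, the reduction to basic neighborhoods of the form $D(e')$ and the appeal to density of $UF(E(S))$ in $\wh{E(S)}_T$ are routine, and Corollary~\ref{c:minimalactionofinv} delivers the minimality of $\mathscr U_T(S)$.
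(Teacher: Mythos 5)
Your proof is correct and follows essentially the same route as the paper: reduce a basic neighborhood to a set of the form $D(e')$ using the density of ultrafilters, then use $0$-simplicity of $S$ to manufacture from a factorization of some $f\in\theta\inv(1)$ through $e'$ an element $s$ with $s^*s=f$ and $ss^*\leq e'$, so that $s\theta\in D(e')$. The only (immaterial) difference is the choice of that element: the paper writes $f=se't$ and takes $z=e'tf$, while you write $f=ae'b$ and take $s=a^*f$; both computations check out.
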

\begin{proof}
We must show that all orbits of $S$ on $X=\wh{E(S)}_T$ are dense. Since $UF(E(S))$ is dense in $X$, it suffices to show that each orbit contains $UF(E(S))$ in its closure. Let $\theta\in X$ and $\p\in UF(E(S))$. Let $X_e\cap X_{e_1}^c\cap\cdots \cap X_{e_n}^c$ be a basic neighborhood of $\p$.  Since $\mathcal F=\p\inv(1)$ is an ultrafilter and doesn't contain the $e_i$, there exists $e'\in E(S)$ with $\p(e')=1$, $e'\leq e$ and $e'e_i=0$ for $i=1,\ldots, n$ (cf.~the proof of Lemma~\ref{l:ultrafilter}).  Suppose that $\theta(f)=1$.  Since $S$ is $0$-simple, we have $f=se't$ with $s,t\in S$.  Put $z=e'tf$.  Then $sz=f$ and $zf=z$ and so $z^*z=z^*zf=fz^*z=szz^*z=sz=f$ and $e'zz^*=e'e'tfz^*=e'tfz^*=zz^*$ and so $zz^*\leq e'$. Therefore $\theta\in D(f)=D(z^*z)$ and \[(z\theta)(e')\geq (z\theta)(zz^*)=\theta(z^*zz^*z)=\theta(f)=1.\]  Thus $z\theta\in X_{e'}\subseteq X_e\cap X_{e_1}^c\cap\cdots \cap X_{e_n}^c$.  We conclude that $\ov{\mathcal O_{\theta}}\supseteq \ov{UF(E(S))}=X$, as required.
\end{proof}

The next corollary is one of our principal applications to inverse semigroups.

\begin{Cor}\label{c:simplicitytight}
Let $S$ be a congruence-free inverse semigroup and $\Bbbk$ a field. Suppose, moreover, that $\mathscr U_T(S)$ is Hausdorff (e.g., if $S$ is Hausdorff).  Then $\Bbbk \mathscr U_T(S)$ is simple.
\end{Cor}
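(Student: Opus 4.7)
The plan is to assemble the corollary essentially as a three-line consequence of the results already established in this subsection, together with the simplicity criterion of Corollary~\ref{simplehaus}.

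First I would unpack the hypothesis ``congruence-free'' using the structural characterization recalled just before Lemma~\ref{l:ultrafilter}: a congruence-free inverse semigroup with zero is $0$-simple, fundamental, and has a $0$-disjunctive semilattice of idempotents. (One should also check that $S$ actually has a zero in the congruence-free case, which is immediate since the universal congruence is distinct from equality and the trivial inverse semigroup is excluded by convention.) This splits the three pieces of the hypothesis into precisely the ingredients needed by the earlier propositions.

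Next I would note that $\mathscr U_T(S) = S \ltimes \wh{E(S)}_T$ is an ample groupoid: $\wh{E(S)}_T$ is a closed subspace of the Hausdorff space $\wh{E(S)}$ with its basis of compact open sets (intersected with $\wh{E(S)}_T$), so the general facts about groupoids of germs recalled in Section~2 apply. By the standing Hausdorff assumption on $\mathscr U_T(S)$, Corollary~\ref{simplehaus} is available, so it suffices to verify effectiveness and minimality of $\mathscr U_T(S)$.

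Effectiveness follows directly from Corollary~\ref{c:tightiseffective} applied to $S$ (fundamental and $0$-disjunctive, with $\mathscr U_T(S)$ Hausdorff). Minimality follows directly from Proposition~\ref{p:minimality} applied to $S$ ($0$-simple). Invoking Corollary~\ref{simplehaus} then yields that $\Bbbk\mathscr U_T(S)$ is simple. There is really no main obstacle here: all the heavy lifting (the effectiveness argument through ultrafilters, the density/orbit argument for minimality, and the simplicity criterion for Hausdorff ample groupoids) has been done in the preceding results, and the corollary is the clean combination of the three congruence-free ingredients with those results.
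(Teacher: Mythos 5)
Your proposal is correct and follows exactly the paper's argument: decompose congruence-freeness into $0$-simple, fundamental, and $0$-disjunctive, then combine Corollary~\ref{c:tightiseffective} (effectiveness), Proposition~\ref{p:minimality} (minimality), and Corollary~\ref{simplehaus} (the simplicity criterion for Hausdorff ample groupoids). The additional checks you mention (that $S$ has a zero and that $\mathscr U_T(S)$ is ample) are sound and only make the argument more complete.
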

\begin{proof}
Since congruence-free inverse semigroups are fundamental, $0$-simple and have $0$-disjunctive semilattices of idempotents, this is immediate from Corollary~\ref{simplehaus}, Corollary~\ref{c:tightiseffective} and Proposition~\ref{p:minimality}.
\end{proof}

We are now ready to characterize Hausdorff inverse semigroups with zero whose contracted semigroup algebras are simple, which is another main result of this paper. Let us say that an inverse semigroup $S$ with zero is \emph{tight}, if $0\neq e\in E(S)$ and $F$ a cover of $e$ implies $e\in F$.  Equivalently, $S$ is tight if each proper principal character of $S$ is tight, that is, $\mathscr U_0(S)=\mathscr U_T(S)$.  Notice that $S$ is tight if and only if $E(S)$ is tight.

\begin{Thm}\label{t:simpleinvalg}
Let $S$ be Hausdorff inverse semigroup with zero and $\Bbbk$ a field.  Then the contracted semigroup algebra $\Bbbk_0S$ is simple if and only if $S$ is congruence-free and tight.
\end{Thm}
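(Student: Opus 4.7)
My plan is to translate the statement via the isomorphism $\Bbbk_0 S \cong \Bbbk\mathscr U_0(S)$ of Theorem~\ref{t:isothm} and to use the equivalence (noted right after the definition of \emph{tight}) that $S$ is tight if and only if $\mathscr U_0(S) = \mathscr U_T(S)$. For the sufficiency, if $S$ is congruence-free and tight, then $\mathscr U_0(S) = \mathscr U_T(S)$ is Hausdorff by Proposition~\ref{p:hausdorffcondition} (since $S$ is), so Corollary~\ref{c:simplicitytight} immediately gives that $\Bbbk_0 S \cong \Bbbk\mathscr U_0(S) = \Bbbk\mathscr U_T(S)$ is simple.

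For the necessity, suppose $\Bbbk_0 S$ is simple. Congruence-freeness is the standard observation already recorded in the paragraph preceding the theorem: any non-trivial congruence $\equiv$ on $S$ induces a surjection $\Bbbk_0 S \twoheadrightarrow \Bbbk_0[S/{\equiv}]$ whose kernel is a proper non-zero ideal. For tightness, I argue by contradiction: if $S$ is not tight, there exist a non-zero $e \in E(S)$ and a cover $F = \{e_1, \ldots, e_n\}$ of $e$ with $e \notin F$, so each $e_i < e$. The basic compact open set $U = D(e) \cap D(e_1)^c \cap \cdots \cap D(e_n)^c$ sits inside $\wh{E(S)}_0$ and contains the proper principal character $\chi_e$ (proper because $e \neq 0$), while $U \cap \wh{E(S)}_T = \emptyset$ since any tight $\theta$ with $\theta(e) = 1$ must, by the cover condition, satisfy $\theta(e_i) = 1$ for some $i$.

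Since $\wh{E(S)}_T$ is closed and invariant in $\wh{E(S)}$ and $S$ is Hausdorff, composing the surjections furnished by Proposition~\ref{p:presentation} for the chain $\wh{E(S)}_T \subseteq \wh{E(S)}_0 \subseteq \wh{E(S)}$ produces a surjective ring homomorphism $\Bbbk\mathscr U_0(S) \twoheadrightarrow \Bbbk\mathscr U_T(S)$ given by restriction of functions. Under Theorem~\ref{t:isothm}, the element $\prod_{i=1}^n (e - e_i) \in \Bbbk_0 S$ corresponds to $\chi_U \in \Bbbk\mathscr U_0(S)$, which is non-zero (since $\chi_e \in U$) but lies in the kernel of this restriction. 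Because any non-zero idempotent of $S$ extends via Zorn's lemma to an ultrafilter and ultrafilters are tight, $\wh{E(S)}_T$ is non-empty, so $\Bbbk\mathscr U_T(S) \neq 0$ and the kernel is a proper non-zero ideal, contradicting simplicity. The most delicate step will be justifying that restriction yields a well-defined surjective ring homomorphism $\Bbbk\mathscr U_0(S) \twoheadrightarrow \Bbbk\mathscr U_T(S)$, which is precisely where the Hausdorff hypothesis on $S$ enters via Proposition~\ref{p:presentation}.
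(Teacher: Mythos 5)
Your proof is correct. The sufficiency direction and the congruence-freeness half of necessity are the same as the paper's. Where you genuinely diverge is in showing that simplicity forces tightness: the paper deduces from Corollary~\ref{simplehaus} that $\mathscr U_0(S)$ is minimal, and then notes that $\wh{E(S)}_T$ is a non-empty closed invariant subspace, hence all of $\wh{E(S)}_0$, so every proper (in particular every proper principal) character is tight. You instead argue contrapositively, producing from a non-tight principal character an explicit non-zero element $\chi_U$ (the image of $\prod_{i=1}^n(e-e_i)$) lying in the kernel of the restriction map $\Bbbk\mathscr U_0(S)\to\Bbbk\mathscr U_T(S)$, whose codomain is non-zero. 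The two arguments have the same underlying content --- a proper, non-empty, closed invariant subspace yields a proper non-zero ideal --- but the paper routes this through the module-theoretic Proposition~\ref{minimal} (simple $\Rightarrow$ minimal), whereas you re-derive the needed special case directly from the restriction homomorphism of Proposition~\ref{p:presentation}. Your version is more self-contained for this particular theorem (it avoids the $\Bbbk L_x$ modules and gives an explicit witness for non-simplicity), at the cost of having to justify that restriction to the closed invariant subspace $\wh{E(S)}_T$ is a well-defined surjective algebra homomorphism; you correctly identify this as the delicate point, and it is exactly what the proof of Proposition~\ref{p:presentation} (equivalently, Corollary~\ref{c:tightpres}) supplies under the Hausdorff hypothesis.
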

\begin{proof}
Suppose first that $\Bbbk_0S$ is simple. We already observed that $S$ must be congruence-free.  By Corollary~\ref{simplehaus} and the isomorphism $\Bbbk_0S\cong \Bbbk\mathscr U_0(S)$, we must have that $\mathscr U_0(S)$ is minimal.  Since $\wh{E(S)}_T$ is a closed invariant subspace, we deduce that all proper characters of $S$ are tight and hence $S$ is tight.  Conversely, if $S$ is tight, then because the principal characters are dense and tight, we deduce that $\wh{E(S)}=\wh{E(S)}_T$ and hence $\mathscr U_0(S)=\mathscr U_T(S)$.  Therefore, we have $\Bbbk_0S\cong \Bbbk\mathscr U_0(S)\cong \Bbbk\mathscr U_T(S)$ and the result follows from Corollary~\ref{c:simplicitytight}.
\end{proof}

\begin{Cor}\label{c:simpleeunit}
Let $S$ be a $0$-$E$-unitary inverse semigroup and $\Bbbk$ a field.  Then the contracted semigroup algebra $\Bbbk_0S$ is simple if and only if $S$ is congruence-free and tight.
\end{Cor}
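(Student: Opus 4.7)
The plan is to reduce Corollary~\ref{c:simpleeunit} immediately to Theorem~\ref{t:simpleinvalg} by verifying that every $0$-$E$-unitary inverse semigroup $S$ is Hausdorff in the sense defined in the paper. Once this is established, both directions of the biconditional in the corollary are given by the theorem, so no further work is needed.

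To confirm the Hausdorff property, I would apply Proposition~\ref{p:Hausdorffsemichar}: it suffices to show that for each $s\in S$, the lower set $(s^*s)^\downarrow \cap s^\downarrow$ of $E(S)$ is finitely generated. Fix $s\in S$ and take any $x$ in this set. From $x\leq s^*s$ and the fact that elements below an idempotent of an inverse semigroup are themselves idempotents (write $x=f\cdot s^*s$ with $f\in E(S)$, a product of commuting idempotents), we see $x\in E(S)$. Hence every element of $(s^*s)^\downarrow \cap s^\downarrow$ is an idempotent $e\leq s$.

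Now the $0$-$E$-unitary assumption forces a dichotomy. If $s\in E(S)$, then $s=s^*s$ and the set under consideration equals $s^\downarrow$, generated as a lower set by the single element $s$. Otherwise, $s\notin E(S)$, so by $0$-$E$-unitariness the only idempotent below $s$ is $0$, whence $(s^*s)^\downarrow\cap s^\downarrow=\{0\}$, generated by $\{0\}$. In both cases the lower set is finitely generated, so Proposition~\ref{p:Hausdorffsemichar} gives that $S$ is Hausdorff.

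With $S$ Hausdorff, Theorem~\ref{t:simpleinvalg} applies verbatim to yield that $\Bbbk_0 S$ is simple if and only if $S$ is congruence-free and tight. There is no genuine obstacle here: the only step requiring care is the observation that elements of $(s^*s)^\downarrow$ are automatically idempotents, which is a standard fact about inverse semigroups and precisely the reason the author notes in the preliminaries that $0$-$E$-unitary semigroups are Hausdorff.
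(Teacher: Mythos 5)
Your proposal is correct and follows exactly the route the paper intends: the corollary is stated without proof precisely because the preliminaries already note that $0$-$E$-unitary inverse semigroups are Hausdorff (via Proposition~\ref{p:Hausdorffsemichar}), after which Theorem~\ref{t:simpleinvalg} applies directly. Your verification of the Hausdorff property—every element of $(s^*s)^\downarrow\cap s^\downarrow$ is an idempotent below $s$, so the lower set is either $s^\downarrow$ or $\{0\}$ according to whether $s$ is idempotent—is the argument the paper alludes to.
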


Let us consider an example.

\begin{Example}
If $X$ is a set with $|X|\geq 2$, then the polycyclic inverse monoid $P_X$~\cite{Lawson} is $0$-$E$-unitary and congruence-free.  If $|X|<\infty$, then $\{x^*x\mid x\in X\}$ covers $1$ and so $P_X$ is not tight.  Moreover, $\Bbbk\mathscr U_T(P_X)$ is the Leavitt algebra associated to a graph with one vertex and $|X|$ loops.  If $|X|$ is infinite, then $P_X$ is tight.  For if $ww^*$ is an idempotent (with $w$ a word over $X$) and if $wx_1(wx_1)^*,\ldots, wx_n(wx_n)^*$ are idempotents strictly below $ww^*$, then choosing $x\in X$ different from the first letters of $x_1,\ldots, x_n$, we have that $0\neq wx(wx)^*<ww^*$ and $wx(wx)^*(wx_i)(wx_i)^*=0$.  Thus $P_X$ is tight and therefore, $\Bbbk P_X$ is simple for any field $\Bbbk$ (as is well known).
\end{Example}

\subsection{Primitivity and semiprimitivity of inverse semigroup algebras}
In this section, we apply our results on primitivity and semiprimitivity of groupoid algebras to inverse semigroups.  We first give our promised example for which $\Bbbk$-density differs from density.

\begin{Example}\label{examplecliff}
Let $S$ be the following inverse monoid.  It consists of elements $\{a,e,0,1,2,3,\ldots\}$ where $e,0,1,2,3,\ldots$ are idempotents, $0$ is a multiplicative zero, $e$ is the identity, $ij=0$ for $i\neq j>0$, $a^2=e$ and $ai=i=ia$ for all $i\geq 0$.

Each character of $E(S)$ is principal. The universal groupoid $\mathscr U(S)$ is isomorphic to a group bundle with unit space $E(S)$ where $E(S)$ is given the topology of the one-point compactification of the discrete space $E(S)\setminus \{e\}$.  The isotropy group at $e$ is $\{a,e\}$ and is trivial at all other objects.  A subset containing $a$ is open if and only if contains all but finitely many elements of $E(S)$.  In particular, all neighborhoods of $e$ and $a$ intersect non-trivially and so $\mathscr U(S)$ is not Hausdorff.  Notice that $E(S)\setminus \{e\}$ is dense in $\mathscr G\skel 0$.  However, it is not $\Bbbk$-dense for any commutative ring with unit $\Bbbk$.  Let $U=\{a\}\cup E(S)\setminus\{e\}$.  Then $U\in \Bis_c(\mathscr U(S))$ and hence $\p=\chi_{E(S)}-\chi_U = \delta_e-\delta_a$ belongs to $\Bbbk \mathscr U(S)$.  There is no element $g\in \mathscr U(S)\skel 1$ with $\p(g)\neq 0$ and $\dom(g)\in E(S)\setminus \{e\}$.  Thus $E(S)\setminus \{e\}$ is not $\Bbbk$-dense.
\end{Example}

Our next proposition shows that the principal characters of an inverse semigroup are $\Bbbk$-dense for any base ring $\Bbbk$.

\begin{Prop}\label{p:idempotentsstronglydense}
Let $S$ be an inverse semigroup and $\Bbbk$ a commutative ring with unit.  Then the set of principal characters of $E(S)$ is $\Bbbk$-dense in $\mathscr U(S)$.
\end{Prop}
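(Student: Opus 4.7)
The plan is to leverage the isomorphism $\Bbbk S \cong \Bbbk \mathscr U(S)$ from Theorem~\ref{t:isothm}, together with the explicit germ description recorded just before that theorem (namely that $[s,\chi_{s^*s}]\in (t,D(t^*t))$ if and only if $s\leq t$), to exhibit for every nonzero $\p\in\Bbbk\mathscr U(S)$ a germ whose domain is a principal character and on which $\p$ is nonzero.

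First I would write a nonzero $\p\in\Bbbk\mathscr U(S)$ in the canonical form $\p=\sum_{s\in T}c_s\,\chi_{(s,D(s^*s))}$, with $T\subseteq S$ finite and all $c_s\neq 0$; the isomorphism in Theorem~\ref{t:isothm} guarantees linear independence of the $\chi_{(s,D(s^*s))}$, so such a representation exists and $T$ is unambiguously defined by $\p$.

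Next, since $T$ is finite and nonempty, I would pick an element $s_0\in T$ which is maximal in $T$ with respect to the natural partial order of $S$. Evaluate $\p$ on the germ $g=[s_0,\chi_{s_0^*s_0}]$, whose domain is by construction the principal character $\chi_{s_0^*s_0}$. By the germ-containment identity recalled above, for each $s\in T$ we have $\chi_{(s,D(s^*s))}(g)=1$ iff $s_0\leq s$, and maximality of $s_0$ in $T$ forces this to occur only for $s=s_0$. Hence $\p(g)=c_{s_0}\neq 0$, which shows that the set of principal characters is $\Bbbk$-dense in $\mathscr U(S)\skel 0$.

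There is no real obstacle: the argument is a one-line calculation once the correct germ is chosen. The only subtlety is that maximality must be taken with respect to the natural order on $S$, and one must be sure to use the already-recorded identification of $[s_0,\chi_{s_0^*s_0}]$ with $[s,\chi_{s_0^*s_0}]$ precisely when $s_0\leq s$; this is what forces the cancellations away from $s_0$ and makes maximality exactly the right condition to impose.
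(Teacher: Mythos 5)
Your proof is correct and is essentially the paper's own argument: both pass through the isomorphism $\Bbbk S\cong\Bbbk\mathscr U(S)$, pick a term maximal in the natural partial order, and use the fact that $[s_0,\chi_{s_0^*s_0}]\in (s,D(s^*s))$ iff $s_0\leq s$ to see that only the maximal term contributes at that germ. Nothing is missing.
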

\begin{proof}
Recall that there is an isomorphism $\theta\colon \Bbbk S\to \Bbbk \mathscr U(S)$ given by sending $s\in S$ to $\chi_{(s,D(s^*s))}$. Suppose that $\p$ is the image under $\theta$ of $\sum_{i=1}^nc_is_i$ with $c_1,\ldots,c_n\in \Bbbk\setminus \{0\}$. Without loss of generality we may assume that $s_1$ is maximal among $s_1,\ldots,s_n$ in the natural partial order.  Then the arrow $[s_1,\chi_{s_1^*s_1}]$ belongs to $(s_1,D(s_1^*s_1))$ but not to $(s_i,D(s_i^*s_i))$, for $i\neq 1$, by maximality of $s_1$.  Thus $\p([s_1,\chi_{s_1^*s_1}])\neq 0$ and $\dom([s_1,\chi_{s_1^*s_1}])= \chi_{s_1^*s_1}$.  This proves that the principal characters are $\Bbbk$-dense.
\end{proof}

Next we recover Domanov's theorem~\cite{Domanov} as a special case of our groupoid results.

\begin{Cor}[Domanov]
Let $S$ be an inverse semigroup and $\Bbbk$ a commutative ring with unit.  Suppose that $\Bbbk G_e$ is semiprimitive for each idempotent $e\in E(S)$. Then $\Bbbk S$ is semiprimitive.  This applies, in particular, if $\Bbbk$ is a field of characteristic $0$ that is not algebraic over $\mathbb Q$.
\end{Cor}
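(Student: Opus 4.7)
The plan is to reduce immediately to Theorem~\ref{t:semiprimitivity} via the isomorphism $\Bbbk S \cong \Bbbk \mathscr U(S)$ supplied by Theorem~\ref{t:isothm}. Once everything is translated to the universal groupoid side, the semiprimitivity criterion only needs to be verified on a $\Bbbk$-dense set of units, and the principal characters are tailor-made for this purpose.

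First I would recall from the discussion preceding Theorem~\ref{t:isothm} that the isotropy group of $\mathscr U(S)$ at the principal character $\chi_e$ is (isomorphic to) the maximal subgroup $G_e$ for every $e \in E(S)$. Thus, under the standing hypothesis, the set
\[
X = \{\chi_e \mid e \in E(S)\} \subseteq \wh{E(S)} = \mathscr U(S)\skel 0
\]
consists entirely of points whose isotropy group algebra over $\Bbbk$ is semiprimitive. By Proposition~\ref{p:idempotentsstronglydense}, $X$ is $\Bbbk$-dense in $\mathscr U(S)\skel 0$. Theorem~\ref{t:semiprimitivity} applied to $\mathscr U(S)$ then yields that $\Bbbk \mathscr U(S)$ is semiprimitive, and transporting along Theorem~\ref{t:isothm} gives that $\Bbbk S$ is semiprimitive.

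For the final clause, I would invoke Amitsur's theorem~\cite{Amitsur} directly: if $\Bbbk$ is a field of characteristic $0$ that is not algebraic over $\mathbb Q$, then $\Bbbk G$ is semiprimitive for every group $G$, in particular for each maximal subgroup $G_e$ of $S$. Hence the hypothesis of the first statement is automatically satisfied and $\Bbbk S$ is semiprimitive.

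There is no real obstacle here; the whole point is that the combination of Proposition~\ref{p:idempotentsstronglydense} (principal characters are $\Bbbk$-dense, even when $\mathscr U(S)$ fails to be Hausdorff) with the identification of isotropy groups at principal characters makes the groupoid machinery do all of Domanov's original work. The only mild subtlety worth flagging is that one must avoid the naive attempt to replace $\Bbbk$-density with topological density, which Example~\ref{examplecliff} shows is genuinely stronger in the non-Hausdorff setting inherent to $\mathscr U(S)$ for non-Hausdorff $S$.
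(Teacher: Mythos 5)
Your argument is correct and coincides with the paper's own proof: both identify the isotropy group at the principal character $\chi_e$ with $G_e$, invoke Proposition~\ref{p:idempotentsstronglydense} for $\Bbbk$-density of the principal characters, and then apply Theorem~\ref{t:semiprimitivity} together with the isomorphism $\Bbbk S\cong\Bbbk\mathscr U(S)$, with Amitsur's theorem handling the final clause. No discrepancies to report.
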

\begin{proof}
If $\chi_e$ is the principal character associated to $e\in E(S)$, then $G_e$ is the isotropy group at $\chi_e$.  Since the principal characters are $\Bbbk$-dense in $\mathscr U(S)$ by Proposition~\ref{p:idempotentsstronglydense}, the result follows from Theorem~\ref{t:semiprimitivity} and the isomorphism $\Bbbk S\cong \Bbbk\mathscr U(S)$.
\end{proof}

We now give an example showing that $\Bbbk$-density cannot be relaxed to just density in Theorem~\ref{t:semiprimitivity}.

\begin{Example}\label{e:needkdense}
Let $S$ be the inverse monoid from Example~\ref{examplecliff} and let $\mathbb F_2$ be the $2$-element field.  The set of principal characters associated to the non-identity idempotents is dense in $\wh{E(S)}$ and the corresponding isotropy groups are trivial.  Thus $\mathbb F_2G_x$ is semiprimitive for $x$ in a dense subset of $\mathscr U(S)\skel 0$.  But $\mathbb F_2S$ is a commutative ring with unit and the element $e-a$ is nilpotent (because $(e-a)^2=e-2a+e=0$). Thus $\mathbb F_2S$ is not semiprimitive.  This shows that for non-Hausdorff groupoids, we need to work with $\Bbbk$-density rather than just density.
\end{Example}

A semilattice $E$ is \emph{pseudofinite}~\cite{MunnSemiprim} if, for all $e\in E$, the set of elements strictly below $e$ is finitely generated as a lower set.  In~\cite[Proposition~2.5]{mygroupoidalgebra}, it was shown that this is equivalent to the principal characters being isolated points of $\wh{E}$.  Indeed, if $e_1,\ldots, e_n$ generate the lower set of elements below $e$, then $D(e)\cap D(e_1)^c\cap\cdots\cap D(e_n)^c=\{\chi_e\}$.  The following corollary is the main result of~\cite{MunnSemiprim}.

\begin{Cor}[Munn]
Let $S$ be an inverse semigroup and $\Bbbk$ a commutative ring with unit.  If $E(S)$ is pseudofinite, then $\Bbbk S$ is semiprimitive if and only if $\Bbbk G_e$ is semiprimitive for each idempotent $e$.
\end{Cor}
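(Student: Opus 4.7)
The plan is to reduce the claim directly to Corollary~\ref{c:isolatedsemi} applied to the universal groupoid $\mathscr U(S)$ via the isomorphism $\Bbbk S \cong \Bbbk \mathscr U(S)$ of Theorem~\ref{t:isothm}. The target statement is an "if and only if", and both directions should fall out of the same setup once we identify the right $\Bbbk$-dense set of isolated points in $\mathscr U(S)\skel 0 = \wh{E(S)}$.

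First I would take $X$ to be the set of principal characters $\{\chi_e \mid e\in E(S)\}$. By the result cited from~\cite[Proposition~2.5]{mygroupoidalgebra} and the discussion immediately preceding the corollary, the pseudofiniteness of $E(S)$ gives, for each $e \in E(S)$, generators $e_1,\ldots,e_n$ of the strict lower set of $e$, whence the basic open set $D(e)\cap D(e_1)^c\cap \cdots \cap D(e_n)^c = \{\chi_e\}$. Thus every element of $X$ is isolated in $\wh{E(S)}$. Next, by Proposition~\ref{p:idempotentsstronglydense}, the set $X$ is $\Bbbk$-dense in $\mathscr U(S)$. Finally, as recorded earlier in the paper, the isotropy group of $\mathscr U(S)$ at the principal character $\chi_e$ is canonically isomorphic to the maximal subgroup $G_e$ of $S$ at $e$.

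With these ingredients in hand, Corollary~\ref{c:isolatedsemi} applies verbatim to $\mathscr G = \mathscr U(S)$ and the $\Bbbk$-dense set $X$ of isolated points: $\Bbbk \mathscr U(S)$ is semiprimitive if and only if $\Bbbk G_{\chi_e}$ is semiprimitive for every $e \in E(S)$, which via the identification $G_{\chi_e}\cong G_e$ is exactly the statement that $\Bbbk G_e$ is semiprimitive for every idempotent $e$. Transporting across the isomorphism $\Bbbk S \cong \Bbbk \mathscr U(S)$ finishes the proof.

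I do not anticipate any genuine obstacle here: all the work has already been done in the preceding results. The only point requiring mild care is the verification that pseudofiniteness forces the principal characters to be isolated (which is what allows both directions to be combined into a biconditional via Proposition~\ref{p:isolated} inside Corollary~\ref{c:isolatedsemi}); once that is noted, the proof is a one-line assembly.
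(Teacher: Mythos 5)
Your proposal is correct and follows exactly the paper's own argument: identify the principal characters as a $\Bbbk$-dense set of isolated points (using pseudofiniteness and Proposition~\ref{p:idempotentsstronglydense}), note their isotropy groups are the maximal subgroups, and apply Corollary~\ref{c:isolatedsemi} together with the isomorphism $\Bbbk S\cong \Bbbk\mathscr U(S)$. No gaps.
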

\begin{proof}
The principal characters form a $\Bbbk$-dense set of isolated points in $\mathscr U(S)$ and their isotropy groups are precisely the maximal subgroups.  Corollary~\ref{c:isolatedsemi} and the isomorphism $\Bbbk S\cong \Bbbk\mathscr U(S)$ yield the required result.
\end{proof}

As another corollary, we obtain the following result for inverse semigroup algebras. Recall that an inverse semigroup $S$ (with zero) is \emph{($0$-)bisimple} if it contains a unique (non-zero) $\mathscr D$-class, that is, the principal characters (except $\chi_0$) of $\mathscr U(S)$ belong to a single orbit.

\begin{Cor}
Let $S$ be a ($0$-)bisimple inverse semigroup with maximal subgroup $G$ of its unique (non-zero) $\mathscr D$-class and let $\Bbbk$ be a field.  If $\Bbbk G$ is primitive, then so is the (contracted) semigroup algebra of $S$.  The converse holds if $E(S)$ is pseudofinite.
\end{Cor}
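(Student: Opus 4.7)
The plan is to reduce both implications to Theorem~\ref{t:primitivity} via the isomorphism $\Bbbk S\cong\Bbbk\mathscr U(S)$ of Theorem~\ref{t:isothm} in the bisimple case, and via $\Bbbk_0 S\cong\Bbbk\mathscr U_0(S)$ in the $0$-bisimple case. Fix a (non-zero) idempotent $e$ in the unique (non-zero) $\mathscr D$-class of $S$ and consider its principal character $\chi_e$; the isotropy group of $\mathscr U(S)$ (or of $\mathscr U_0(S)$) at $\chi_e$ is precisely $G_e\cong G$.

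The key step is to show that the orbit $\mathcal O_{\chi_e}$ is $\Bbbk$-dense in the relevant unit space. By ($0$-)bisimplicity, for every (non-zero) idempotent $f$ there exists $s\in S$ with $s^*s=e$ and $ss^*=f$; a direct calculation then gives $s\cdot\chi_e=\chi_f$, so $\mathcal O_{\chi_e}$ contains every (non-zero) principal character. Proposition~\ref{p:idempotentsstronglydense} therefore yields the $\Bbbk$-density of $\mathcal O_{\chi_e}$ in $\mathscr U(S)\skel 0$; in the contracted setting, one needs to reapply (rather than simply cite) the proof of that proposition, but the argument carries over verbatim once one notes that an element of $\Bbbk_0 S$ is a $\Bbbk$-linear combination of non-zero elements of $S$, so the maximal $s_1$ extracted in the proof satisfies $s_1^*s_1\neq 0$ and therefore $\chi_{s_1^*s_1}$ is a proper principal character in $\mathscr U_0(S)\skel 0$.

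The forward implication now follows immediately from Theorem~\ref{t:primitivity}: primitivity of $\Bbbk G\cong\Bbbk G_{\chi_e}$ yields primitivity of $\Bbbk\mathscr U(S)$ (respectively $\Bbbk\mathscr U_0(S)$), and hence of $\Bbbk S$ (respectively $\Bbbk_0 S$). For the converse, assume $E(S)$ is pseudofinite. As recalled in the paper, this forces each principal character to be an isolated point of $\wh{E(S)}$, and therefore also of $\wh{E(S)}_0$ in the zero case. The final clause of Theorem~\ref{t:primitivity} then converts primitivity of the (contracted) groupoid algebra back into primitivity of $\Bbbk G_{\chi_e}=\Bbbk G$. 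The only mildly delicate issue throughout is the verification of $\Bbbk$-density in the contracted case, and this is essentially formal since $0\in S$ is already identified with $0\in\Bbbk$ in $\Bbbk_0 S$.
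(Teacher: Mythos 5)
Your proposal is correct and follows essentially the same route as the paper: ($0$-)bisimplicity makes the principal characters a single $\Bbbk$-dense orbit via Proposition~\ref{p:idempotentsstronglydense}, pseudofiniteness makes them isolated, and both directions then follow from Theorem~\ref{t:primitivity}. The extra care you take with $\Bbbk$-density in the contracted case is a detail the paper leaves implicit, but it does not change the argument.
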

\begin{proof}
By ($0$-)bisimplicity, the principal characters form a $\Bbbk$-dense orbit.  They are isolated points if $E(S)$ is pseudofinite. The result follows from Theorem~\ref{t:primitivity}.
\end{proof}

In particular, we obtain the main result of~\cite{Munnprimitive}.

\begin{Cor}
Let $S$ be a $0$-bisimple inverse semigroup with trivial maximal subgroup of its unique non-zero $\mathscr D$-class.  Then the contracted semigroup algebra of $S$ is primitive over any field.
\end{Cor}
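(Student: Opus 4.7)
The plan is to apply the preceding corollary directly, taking $G$ to be the trivial group. Since $S$ is $0$-bisimple by hypothesis and the maximal subgroup of its unique non-zero $\mathscr D$-class is trivial, the only verification needed is that the group algebra $\Bbbk G$ is primitive when $G$ is trivial. But in that case $\Bbbk G \cong \Bbbk$, which as a field is a simple ring, hence acts faithfully on itself as a simple module and so is (left and right) primitive over itself. Thus the hypothesis of the $0$-bisimple version of the preceding corollary is satisfied.

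Applying that corollary then yields immediately that the contracted semigroup algebra $\Bbbk_0 S$ is primitive. Note that we only invoke the forward implication of the preceding corollary (primitivity of the maximal subgroup algebra implies primitivity of the contracted semigroup algebra), so no pseudofiniteness assumption on $E(S)$ is required. Since the field $\Bbbk$ was arbitrary, this holds over any field, which is the claim.

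There is essentially no obstacle here: the whole content has been absorbed into the preceding corollary, which in turn rests on Theorem~\ref{t:primitivity} together with Proposition~\ref{p:idempotentsstronglydense} (ensuring that principal characters, and in particular the orbit corresponding to the unique non-zero $\mathscr D$-class, form a $\Bbbk$-dense subset of $\mathscr U_0(S)\skel 0$) and the fact that orbits of $\mathscr U(S)$ on principal characters correspond to $\mathscr D$-classes of $S$. The corollary is therefore just the specialization to trivial maximal subgroups.
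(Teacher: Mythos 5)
Your proposal is correct and is exactly the paper's (implicit) argument: the corollary is stated without proof as an immediate specialization of the preceding corollary to the case $G$ trivial, where $\Bbbk G\cong\Bbbk$ is a field and hence primitive. Your observation that only the forward implication is used, so no pseudofiniteness hypothesis is needed, is also accurate.
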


Munn constructed inverse semigroups with zero whose contracted semigroup algebras are simple over $\mathbb F_p$ (hence primitive) such that none of their maximal subgroups has a semiprimitive algebra~\cite{Munntwoexamples} over $\mathbb F_p$. His examples have the further  property that none of the isotropy groups of $\mathscr U(S)$ have semiprimitive algebras over $\mathbb F_p$ as is easily checked.

\end{document}